\def \dd {\partial}
\DeclareMathOperator{\Hess}{Hess}
\DeclareMathOperator{\Ric}{Ric}
\newtheorem{thm}{Theorem}[section]
\newtheorem{prop}[thm]{Proposition}
\newtheorem{lem}[thm]{Lemma}
\theoremstyle{definition}
\newtheorem{defn}[thm]{Definition}
\theoremstyle{remark}
\newtheorem{rem}[thm]{Remark}
\let\c@equation\c@thm
\numberwithin{equation}{section}
\renewcommand{\tocsection}[3]{%
  \indentlabel{\@ifnotempty{#2}{\bfseries\ignorespaces#1 #2\quad}}\bfseries#3}
\renewcommand{\tocsubsection}[3]{%
  \indentlabel{\@ifnotempty{#2}{\ignorespaces#1 #2\quad}}#3}
\newcommand\@dotsep{4.5}
\def\@tocline#1#2#3#4#5#6#7{\relax
  \ifnum #1>\c@tocdepth 
  \else
    \par \addpenalty\@secpenalty\addvspace{#2}%
    \begingroup \hyphenpenalty\@M
    \@ifempty{#4}{%
      \@tempdima\csname r@tocindent\number#1\endcsname\relax
    }{%
      \@tempdima#4\relax
    }%
    \parindent\z@ \leftskip#3\relax \advance\leftskip\@tempdima\relax
    \rightskip\@pnumwidth plus1em \parfillskip-\@pnumwidth
    #5\leavevmode\hskip-\@tempdima{#6}\nobreak
    \leaders\hbox{$\m@th\mkern \@dotsep mu\hbox{.}\mkern \@dotsep mu$}\hfill
    \nobreak
    \hbox to\@pnumwidth{\@tocpagenum{\ifnum#1=1\bfseries\fi#7}}\par
    \nobreak
    \endgroup
  \fi}
\renewcommand\csname r@tocindent0\endcsname{0pt}
\def\l@subsection{\@tocline{2}{0pt}{2.5pc}{5pc}{}}
\pgfplotsset{compat=1.18}
\title{Modulus of Concavity and  Fundamental Gap Estimates on Surfaces }
\begin{document}

\begin{abstract}
The fundamental gap of a domain is the difference between the first two eigenvalues of the Laplace operator. In a series of recent and celebrated works, it was shown that for convex domains in $\mathbb R^n$ and $\mathbb S^n$ with Dirichlet boundary condition the fundamental gap is at least $\frac{3 \pi^2}{D^2}$ where $D$ denotes the diameter of the domain. 
The key to these results is to establish a strong 
concavity estimate for the logarithm of the first eigenfunction.
In this article, we prove corresponding log-concavity and fundamental gap estimates for surfaces with non-constant positive curvature via a two-point maximum principle. However, the curvature not being constant greatly increases the difficulty of the computation. 
\end{abstract}
 \author[Iowa State University]{Gabriel Khan} 
 \address[Gabriel Khan]{Department of Mathematics, Iowa State University, Ames, IA, USA.}
  \email{gkhan@iastate.edu}
 \thanks{G. Khan is supported in part by Simons Collaboration Grant 849022}
 
   \author[UCSB]{Malik Tuerkoen}
   \address[Malik Tuerkoen]{Department of Mathematics, University of California,  Santa Barbara, CA, USA.}
  \email{mmtuerkoen@ucsb.edu}
  
 \author[]{Guofang Wei}
\address[Guofang Wei]{Department of Mathematics, University of California,  Santa Barbara, CA, USA.}
\email{wei@math.ucsb.edu}
\thanks{G. Wei is partially supported by NSF DMS 2104704. }
\maketitle


\section{Introduction}

Given a bounded domain $\Omega $ in a Riemannian manifold $ (M,g)$, the Laplace operator has an increasing sequence of Dirichlet eigenvalues $0<\lambda_1 < \lambda_2 \leq \lambda_3 \leq \ldots \rightarrow \infty$ and corresponding eigenfunctions $u_i:\Omega\rightarrow \mathbb R$ satisfying
\begin{align}\label{helmholtzequation}
    -\Delta u_i=\lambda_i u_i , \quad \left . {u_i} \right\vert_{\partial \Omega}=0.
\end{align}

The \emph{fundamental gap} is the difference between the first and second eigenvalues
\begin{align*}
    \Gamma(\Omega) = \lambda_2(\Omega)-\lambda_1(\Omega)>0. 
\end{align*}
Obtaining upper and lower bounds for this quantity is important both in mathematics and physics, and has been studied extensively (see, e.g. \cite{singer1985estimate,yu1986lower,lee1987estimate,wang2000estimation, Ni2013, Lu-Rowlett2013, Gong-Li-Luo2016},  the references in \cite{andrews2011proof, daifundamental} and the references in the rest of introduction).
In a celebrated paper, Andrews and Clutterbuck \cite{andrews2011proof} proved the \emph{fundamental gap conjecture} \cite{van1983condensation,yau1986nonlinear,ashbaugh1989optimal}, which states that
for all bounded convex domains $\Omega \subset \mathbb{R}^n$ and Schr\"odinger operators $-\Delta + V$ where $V$ is a convex potential, the fundamental gap satisfies \begin{equation} \Gamma(\Omega) \geq \frac{3\pi^2}{D^2},  \label{gap-3pi} 
\end{equation}
    where $D$ is the diameter of $\Omega$.
The quantity on the right hand side of this estimate corresponds to the fundamental gap of a one-dimensional interval with vanishing potential and is optimal in Euclidean space (See Appendix \ref{Wecannotimproveon3} and Remark \ref{3issharpinEuclideanspace} for more details). 
 Their work uses a two-point maximum principle to prove a strong modulus of concavity estimate for the first eigenfunction.

Several years later, the third named author joint with Dai, He, Seto, and Wang 
(in various subsets) extended the fundamental gap estimate \eqref{gap-3pi} to the round sphere 
\cite{10.4310/jdg/1559786428,he2020fundamental,Dai-Seto-Wei2021}. The presence of curvature greatly complicates the analysis, and these proofs relied in an essential way on the fact that the sectional curvature is \emph{constant}.

 For spaces of negative curvature, the behavior of the fundamental gap turns out to be very different.
In fact, the third named author, Bourni, Clutterbuck, Nguyen, Stancu, and Wheeler \cite{bourni2022vanishing} showed that for any $D>0$ and $\epsilon>0,$ there are convex domains $\Omega \subset \mathbb{H}^n$  of diameter $D$ which satisfy 
 \[ \Gamma(\Omega) <\frac{\epsilon}{D^2}.\] 
 In other words, there is no universal lower bound on the fundamental gap among convex domains of any given diameter.

This result was then extended by Nguyen and the first named author. They showed that for any Riemannian manifold with \emph{any negative sectional curvature}, it is possible to construct convex domains of small diameter whose fundamental gap is \emph{arbitrarily small} \cite{khan2022negative}. As such, obtaining estimates on the fundamental gap in terms of the diameter alone requires that the space has positive curvature. 

Establishing a lower bound on the fundamental gap for convex domains on manifolds with non-constant positive curvature remains a challenge, and is open even for seemingly simple manifolds such as $\mathbb S^2 \times \mathbb S^2$ or $\mathbb {CP}^n$. In \cite{surfacepaper1}, the authors joint with Nguyen obtained the first result in this direction for surfaces with positive curvature using a (one-point) maximum principle, showing that $\Gamma(\Omega) > \frac{\pi^2}{D^2} + \underline \kappa$, under some derivative assumption on the curvature $\kappa$, where $\underline \kappa=\inf _\Omega \kappa.$ 
In this paper, we use two-point maximum principle to study the fundamental gap of surface with non-constant positive curvature. In order to make the two-point maximum principle work, we need to overcome several difficulties, which we will discuss in Subsection \ref{sub1.1}. 
We obtain stronger gap estimates than in our previous work \cite{surfacepaper1} (joint with Nguyen) and, in particular, recover the estimate of \cite{Dai-Seto-Wei2021} for $\mathbb S^2$. 
\begin{thm}\label{main-thm}
     Suppose $(M^2,g)$ is a Riemannian manifold with curvature $\kappa$ bounded by $0< \underline \kappa \le \kappa \le c
     $, there is a constant $\alpha(\underline \kappa,\overline{\kappa})>0$ such that if $$|\nabla \kappa|_\infty \leq \alpha(\underline \kappa,\overline{\kappa}), \ \ \ - (\inf \Delta \kappa)_{-} \le \alpha(\underline \kappa,\overline{\kappa}),$$  
    then for all convex domains $\Omega \subset M^2$ with diameter $D\le \tfrac{\pi}{2\sqrt{\overline \kappa}}$ we have the gap estimate 
    \begin{equation}\label{gap-estimate}
     \Gamma(\Omega)\geq \frac{3\pi^2}{D^2} - (12+3\pi) (\overline{\kappa} -\underline{\kappa}).
\end{equation}
\end{thm}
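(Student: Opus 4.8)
The plan is to follow the two-point maximum principle strategy of Andrews–Clutterbuck, but adapted to the curved setting. First I would reduce the gap estimate to a log-concavity estimate for the first eigenfunction $u_1$. Writing $\varphi = \log u_1$ on $\Omega$, one knows $\varphi$ satisfies a parabolic-type equation, and the standard argument shows that if we can establish a concavity comparison of the form $\varphi(x) - \varphi(y) \le$ (a suitable one-dimensional model function of $d(x,y)$) along all geodesics, then $\lambda_2 - \lambda_1$ is bounded below by the corresponding model gap. The model here should be the log-concavity estimate for the one-dimensional operator $-\frac{d^2}{ds^2}$ on an interval of length $D$, yielding the $\frac{3\pi^2}{D^2}$ main term; the curvature-dependent correction $(12+3\pi)(\overline\kappa - \underline\kappa)$ will appear as an error term coming from the fact that the curvature is not constant.

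Next I would set up the two-point function: fix a modulus function $\psi\colon [0,D/2]\to \mathbb R$ (concave, odd-type, related to $\tan$ or $\sin$ of $\sqrt{\overline\kappa}\,s$) and consider
\begin{equation*}
 Z(x,y) = \nabla_{\!\gamma'} \varphi(x) + \nabla_{\!\gamma'}\varphi(y) - 2\psi\!\left(\tfrac{d(x,y)}{2}\right),
\end{equation*}
where $\gamma$ is the minimizing geodesic from $y$ to $x$. The goal is to show $Z \le 0$ on $\Omega\times\Omega$ via a maximum principle in time along the heat flow / elliptic comparison. At an interior maximum point where $Z=0$, one differentiates twice, uses the second variation of arc length (which brings in the sectional curvature $\kappa$ along $\gamma$), and uses the equation satisfied by $\varphi$ to derive a differential inequality on the model function $\psi$. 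In the constant-curvature case this closes up exactly; here the key new terms are the ones involving $\nabla\kappa$ and $\Delta\kappa$ arising when one commutes derivatives and differentiates the curvature terms in the Bochner/second-variation computation. I would organize the computation so that all such terms are collected into an error $E$ bounded by $C_1 |\nabla\kappa|_\infty + C_2 (\inf\Delta\kappa)_- + C_3(\overline\kappa - \underline\kappa)$, and then choose $\alpha(\underline\kappa,\overline\kappa)$ small enough that $E$ is dominated, leaving the clean model inequality.

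The main obstacle, as the introduction flags, is precisely the non-constancy of the curvature: in the constant-curvature proofs one can use explicit trigonometric comparison functions and the second variation formula produces exactly the terms needed, but with varying $\kappa$ the second variation of $d(x,y)$ involves $\int_\gamma \kappa$ rather than a constant multiple of the length, and differentiating $Z$ a second time produces derivatives of $\kappa$ transverse to and along $\gamma$ that have no a priori sign. Controlling these requires (i) a careful choice of the test/modulus function $\psi$ with enough ``room'' (a strict inequality in the model ODE) to absorb errors, and (ii) sharp bounds — in terms of $|\nabla\kappa|_\infty$ and $(\inf\Delta\kappa)_-$ — on the Jacobi field quantities that appear, which is where the two smallness hypotheses on $\nabla\kappa$ and $\Delta\kappa$ get used. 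A secondary technical point is handling the boundary: since $u_1$ vanishes on $\partial\Omega$, $\varphi \to -\infty$ near $\partial\Omega$, so one must check that the maximum of $Z$ is attained in the interior (using convexity of $\Omega$ and the behavior of $\nabla\varphi$ near the boundary, as in the Euclidean and spherical cases). Once the model inequality is established with the error absorbed, integrating it gives the log-concavity bound, and the standard comparison argument then yields \eqref{gap-estimate}.
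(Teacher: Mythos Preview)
Your overall architecture is right, but you are missing the central technical idea of the paper, and without it the maximum-principle step would fail. You propose a purely radial two-point function
\[
Z(x,y)=\langle\nabla\varphi(y),\gamma'(\tfrac d2)\rangle-\langle\nabla\varphi(x),\gamma'(-\tfrac d2)\rangle-2\psi\!\left(\tfrac{d(x,y)}{2}\right),
\]
and plan to absorb all curvature-variation terms into ``room'' in the model ODE for $\psi$. The paper explains (Subsection~\ref{sub1.1} and Lemma~\ref{two-point-differential-inequality}) that this cannot work: at an interior maximum the second-variation computation produces, for each endpoint, a term of the form $\langle\nabla_r\nabla_r\partial_s\eta,\,e_1\rangle\cdot w_1$, where $w_1=\langle\nabla\varphi,e_1\rangle$ is the derivative of $\log u_1$ \emph{perpendicular} to the geodesic. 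In constant curvature this coefficient vanishes; in variable curvature it does not, and $w_1$ has no a~priori bound, so no choice of $\psi$ with extra slack can dominate it. The paper's fix is to replace the radial modulus by $2\phi(d/2)+\mathcal C(x,y)$, where $\mathcal C(x,y)=\tfrac12\bigl(J'_{x,y}(\tfrac d2)-J'_{x,y}(-\tfrac d2)\bigr)$ is half the index form of the boundary-value Jacobi field along $\gamma_{x,y}$. The point of Proposition~\ref{CancellationProposition} is that the $e_1$-derivatives of $\mathcal C$ \emph{exactly cancel} the uncontrollable $\langle\nabla_r\nabla_r\partial_s\eta,e_1\rangle$ terms, and Lemma~\ref{e_2-derivative-of-C} shows the $e_2$-derivatives of $\mathcal C$ cancel further problematic pieces. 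Only after these cancellations do the residual errors reduce to quantities bounded by $|\nabla\kappa|_\infty$, $(\inf\Delta\kappa)_-$, and $\overline\kappa-\underline\kappa$, which is where your smallness hypothesis finally enters. Your proposal, as written, would stall precisely at the obstacle the introduction flags.

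A second point you gloss over: once the log-concavity estimate is obtained, the paper does \emph{not} compare directly to the interval of length $D$. The correct one-dimensional model forced by the computation is the \emph{perturbed Euclidean model} $\overline\phi''-4(\textup{tn}_{\overline\kappa}-\textup{tn}_{\underline\kappa})\overline\phi'+\overline\lambda\overline\phi=0$ on $[-D/2,D/2]$; a Riccati comparison then passes to the standard model on a longer interval $[-L/2,L/2]$ with $\pi^2/L^2=\overline\lambda_1\ge \pi^2/D^2-(4+\pi)(\overline\kappa-\underline\kappa)$, and the gap comparison gives $\Gamma(\Omega)\ge 3\pi^2/L^2$. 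The specific constant $12+3\pi$ comes from this two-step comparison, not from a generic error bound at the level of $\psi$. Finally, note that the argument also requires a continuity method (via curve-shortening flow on $\partial\Omega$) to justify the auxiliary assumption $w_2(y_0)\le w_2(x_0)$ used to close the maximum-principle inequality; this is more than the usual ``boundary behaviour forces interior maximum'' step you mention.
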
 

The constant $\alpha(\underline \kappa,\overline{\kappa})$ will be given explicitly and goes to zero when $\underline \kappa$ goes to zero (see \eqref{curvature-assumption-in-moc-thm} for the precise condition). 

As the curvature of $M^2$ is $0 <\kappa \le \overline{\kappa}$, we have the injectivity radius of $M^2$ is $\ge \tfrac{\pi}{2\sqrt{\overline \kappa}}$ (see e.g. \cite[Page 281]{doCarmo}). Hence there is no cut point in the interior of $\Omega$.

\subsection{Log-concavity and the maximum principle} \label{sub1.1}

 Following \cite{andrews2011proof,10.4310/jdg/1559786428}, the first key step to proving gap estimates of this size is establishing a super log-concavity estimate for the first eigenfunction. More precisely, if we take $u$ to be first Dirichlet eigenfunction\footnote{ Throughout the rest of this article, we drop the subscript $1$ from the first eigenfunction $u_1$ so that subscripts can be used to indicate derivatives of functions. We also will denote $w=\log u.$} on a convex domain $\Omega$ and define for $w=\log u,$ we show that for $x,y\in \Omega \times \Omega, $ $\mathcal Z(x,y)\leq 0,$ where we set 
 \begin{align}\label{superlogconvaity1}
  \mathcal Z(x,y)= \langle \nabla w (y),\gamma'_{x,y}(\tfrac{d}{2})\rangle - \langle \nabla w (x),\gamma'_{x,y}(\tfrac{-d}{2})\rangle + \frac{\pi}{L}\tan\left(\frac{\pi}{ L}{d(x,y)}\right)-\textup{tn}_{\underline \kappa}\left(\frac{d(x,y)}{2}\right),
  \end{align} 
where $\gamma_{x,y}$ is the unique minimizing geodesic satisfying $\gamma_{x,y}(-\tfrac{d(x,y)}{2})=x$ and $\gamma_{x,y}(\tfrac{d(x,y)}{2})=y$.
In this expression, $L$ is taken to be slightly larger than $D$ and satisfies $L=D+O(\overline \kappa-\underline \kappa)$ as $(\overline \kappa-\underline \kappa)\rightarrow 0^+$. Furthemore, we denote $ \textup{tn}_K(s)=\sqrt{K} \tan (\sqrt{K}s)$ (for $K\geq 0$).  

In other words, we have that  \[F(x,y)=\frac{\pi}{L}\tan\left(\frac{\pi}{ L}{d(x,y)}\right)-\textup{tn}_{\underline \kappa}\left(\frac{d(x,y)}{2}\right) \] is a \emph{modulus of concavity for $w$}, where we call a function $F(x,y)$ a modulus of concavity for $w$ if $\lim_{d(x,y)\rightarrow 0}F(x,y)=0$ and if for all $x,y\in\Omega$
\begin{align*}
    \langle\nabla w(y),\gamma'_{x,y}(\tfrac d2)\rangle- \langle \nabla w(x),\gamma'_{x,y}(-\tfrac d2)\rangle\leq F(x,y).
\end{align*}

Proving $\mathcal Z\leq0$ is the most difficult part of the proof. In order to establish this fact, one is tempted to apply the maximum principle directly to the function $\mathcal Z(x,y)$, which is what was done for $\mathbb R^n$ and $\mathbb S^n.$ However, when the curvature is not constant, there are additional terms that cannot be controlled and thus introduce a significant difficulty, see Lemma~\ref{two-point-differential-inequality}. In order to overcome this difficulty, we instead apply the maximum principle to the function 
\begin{align}\label{Z-intor}
     Z(x,y)= \langle \nabla w (y),\gamma'_{x,y}(\tfrac{d}{2})\rangle - \langle \nabla w (x),\gamma'_{x,y}(\tfrac{-d}{2})\rangle-2 \phi(\tfrac{d}{2})-\mathcal C(x,y),
\end{align}
where $\phi$ will be specified below. Here, $\mathcal C$ is defined in the following way.
\begin{defn}[$\mathcal{C}(x,y)$]
    Suppose we have two points $x,y \in (M^2,g)$ which are not cut points of each other. We let $J_{x,y}$ be such that 
\begin{align}
    J''(s)+\kappa(\gamma_{x,y}(s)) J(s)=0, \quad J(-\tfrac d2)=J(\tfrac d2)=1.  \label{Jacobi-ODE}
\end{align}
$\mathcal{C}(x,y)$ is defined to be the quantity
\begin{equation}\label{def-of-C}
  \mathcal{C}(x,y) =  \frac{1}{2}\left(  J_{x,y}'(\tfrac d2)-J_{x,y}'(-\tfrac d2)\right).
\end{equation}
\end{defn}
Note that $2 \mathcal{C}(x,y)$ is the index form of the Jacobi field $J$ along the geodesic $\gamma_{x,y}$ with $J(-\tfrac d2) = e_x, J(\tfrac d2) =e_y$, where $e_x$ is a unit vector perpendicular to $\gamma'_{x,y}(-\tfrac d2)$ and $e_y$
 is the parallel transport of $e_x$ along $\gamma_{x,y}$. See Lemma~\ref{C-derivative of distance}. 
 
 The quantity $\mathcal{C}(x,y)$ is also related to the  
 the classic Gaussian curvature by taking the limit as $y$ goes to $x$. More precisely, from \eqref{Jacobi-ODE}, we have
    \begin{align}\label{collapse-of-C}
    \frac{-\kappa(x)}{2}=\lim_{y \rightarrow x} \frac{\mathcal{C}(x,y)}{d(x,y)}.
\end{align}

In our computation, $\mathcal{C}(x,y)$ serves as part of the modulus of concavity for $w$, which plays the essential role of absorbing some of the problematic terms (see Proposition~\ref{CancellationProposition}).

\subsection{Overview of the paper}

Because the proof of Theorem \ref{main-thm} requires a number of steps and several involved calculations, in this subsection we provide an overview of the paper and the main ideas used in the argument. In Section \ref{Jacobifieldcomparisonsection}, we establish several fundamental properties for Jacobi fields which will be used throughout the paper. In particular, we give a formula for the solution of inhomogenous Jacobi equations given boundary data. This is used several times in Section \ref{derivativesection} and \ref{DerivativesofCSection}. 
We also prove several comparison results and identities for Jacobi fields which are used in the later sections.

In Section \ref{MOCsection} we apply a two-point maximum principle to $Z$ which is given by  
\begin{align*}
    Z(x,y)=\langle \nabla w(y), \gamma_{x,y}'(\tfrac{d}{2})\rangle -\langle \nabla w(x),\gamma_{x,y}'(-\tfrac d2)\rangle -F(x,y),
\end{align*}
 assuming that it achieves a positive maximum in the interior of $\Omega \times \Omega \backslash \{ (x,x) ~|~ x \in \Omega \}$. From the maximum principle we derive a differential inequality for $F(x,y)$ to satisfy, to make it a modulus of concavity for $w$. We first make no assumptions on the dimension or geometry of the manifold (see Lemma \ref{two-point-differential-inequality}).

We then specialize to the two-dimensional case and focus on functions of the form $F(x,y)=2\phi(\tfrac{d(x,y)}{2})+\mathcal C(x,y).$ The choice of the function $\mathcal C$ here is crucial to make the maximum principle work (see Proposition \ref{CancellationProposition}). We use our maximum principle computation to show that $F(x,y)$ is a modulus of concavity for $w,$ i.e. $Z\leq 0,$ if $\phi$ satisfies a collection of differential inequalities (see Theorem \ref{Propositiongeneralmoc} and Theorem \ref{MOC-thm}). In this derivation, we take for granted several crucial calculations, which are postponed to Sections ~\ref{derivativesection} and \ref{DerivativesofCSection}.

Having derived these inequalities, in Section \ref{perturbed-euclidean-model-section} we introduce the \emph{perturbed Euclidean model}\footnote{This is a perturbation of the model $\phi''+\lambda\phi=0,$ which \cite{Dai-Seto-Wei2021} called \emph{Euclidean model}.}  
\begin{align}\label{perturbed-e-model}
    \overline \phi''-4\left(\textup{tn}_{\overline \kappa}(s)-\textup{tn}_{\underline \kappa}(s)\right)\overline \phi'=-\overline \lambda\overline \phi\quad \textup{with } \overline \phi(-D/2)=\overline \phi(D/2)=0.
\end{align}

We then consider the function \[\phi=(\log \overline \phi_1)'+\textup{tn}_{\underline \kappa}, \] 
 where $\overline \phi_1$ the first eigenfunction of \eqref{perturbed-e-model} and show that this satisfies the differential inequalities required for a modulus of concavity. Appealing to Theorem \ref{MOC-thm-2}, we find that  $F(x,y)=2(\log \overline \phi_1)'(\tfrac{d(x,y)}{2})+\textup{tn}_{\underline \kappa}(\tfrac{d(x,y)}{2})$ is a modulus of concavity for $w$.

In Section \ref{gapcomparison-section}, we compare the fundamental gap $\Gamma(\Omega)$ to the gap of a one-dimensional model. To do so, we first apply a Riccati comparison theorem to conclude that $\mathcal Z\leq 0$ and then compare the gap to that of a Euclidean model on a slightly larger interval of length $L$, where $L=D+O(\overline \kappa-\underline \kappa)>D$\begin{align*}
     \phi''+\lambda\phi=0 \quad \textup{in }[-L/2,L/2], \quad \phi(-L/2)=\phi(L/2)=0.
 \end{align*}
Doing so, we prove the estimate \begin{align*}
     \Gamma(\Omega) \geq \frac{3\pi^2}{L^2}.
 \end{align*}
Estimating $L$ in terms of $D, \overline \kappa,$ and $ \underline \kappa$ then gives the final gap estimate \eqref{gap-estimate}.

In order to finish the proof, we must go back and derive several of the key results which were used in the maximum principle argument. In Section \ref{derivativesection}, we prove the crucial result (Proposition~\ref{CancellationProposition}) which shows how the derivatives of $\mathcal C$ absorb the terms which cannot be controlled and thus allow for the maximum principle to be applied. The proof relies on a calculation of Jacobi fields, and uses the computations done in Section \ref{Jacobifieldcomparisonsection}.

Applying the maximum principle to $Z$ also involves the second derivatives of $\mathcal{C}(x,y)$, which are computed in Section \ref{DerivativesofCSection}. The calculations are similar to those of Section \ref{derivativesection}, but more involved and involve derivatives of the curvature, Jacobi fields and its derivatives (see Proposition~\ref{E_1+E_2-derivatives-of-C}).  With enough control on the first and second derivatives of the curvature, it is possible to control all these quantities.

Finally, in the appendix we show that the constant $3\pi^2$ in  the gap estimate
\[\Gamma(\Omega)\geq \frac{3\pi^2}{D^2} - (12+3\pi) (\overline{\kappa} -\underline{\kappa}) \]
cannot be increased, independently of $D.$

\begin{rem}
    We use dimension two in two ways. First the Jacobi equation becomes one scalar ODE instead of system of coupled ODEs. Second, even when the Jacobi equation is a decoupled system, like in the case of $\mathbb S^2 \times \mathbb S^2$ or $\mathbb {CP}^n$, one cannot control some of the terms as it is not a scalar matrix (see Section \ref{2-d-subsection} and the paragraph above that).
\end{rem}

  \section{Jacobi fields and their comparison geometry}\label{Jacobifieldcomparisonsection}

In this section we establish some fundamental properties of Jacobi fields. These results will be used throughout the paper in the variational calculations.

\subsection{Notation} \label{Notation subsection}

Given any $x \not= y\in M$, we denote the minimal unit-speed geodesic between them as $\gamma_{x,y}$ and use the parametrization where $\gamma_{x,y}(-\tfrac{d}{2})=x$ and $\gamma_{x,y}(\tfrac d2)=y$ with $d=d(x,y).$  Throughout the paper, we will make the standing assumption that $x$ and $y$ are not cut points, which will be justified by our hypothesis on the diameter of $\Omega$. When unambiguous, will drop the subscripts and simply denote this geodesic by $\gamma$.

We then set \[e_n=\gamma_{x,y}'\left(-\tfrac{d}{2}\right)\in T_{x}\Omega\] to be the unit vector $ T_x M$ pointing along the geodesic (towards $y$). We can extend this to an orthonormal basis $\left \{ e_i \right \}_{i=1,\dots, n}$ at $x$ and then parallel translate this orthonormal basis along $\gamma_{x,y}$ to obtain an orthonormal frame along the geodesic. 
  Setting \[R_{ij}(t)=\langle R(e_i,e_n)e_n,e_j\rangle, \]  we consider the 
matrix-valued solutions $J^{1,0}(t)$ and $J^{0,1}(t)$ which satisfy the system
\begin{align}\label{matrixjacobi-equ}
\begin{cases}
J''(t)+R(t)J(t)=0\\
J^{1,0}(-\tfrac d2)=I_n,\, J^{1,0}(\tfrac d2)=0_n\\
J^{0,1}(-\tfrac d2)=0_n,\, J^{0,1}(\tfrac d2)=I_n.
\end{cases}
\end{align}
Recall that given a geodesic $\gamma: [0,a] \rightarrow M$ whose endpoints are not conjugate, there exists a unique Jacobi field along $\gamma$ for any given boundary values at the endpoints, see \cite[Page 118, Prop. 3.9]{doCarmo}. 
As $y$ is not conjugate to $x$ along $\gamma$, the solutions $J^{1,0}, J^{0,1}$ exist and are unique. Moreover, for all $t \in (-\tfrac d2, \tfrac d2)$ they will be invertible. Furthermore, $(J^{1,0})'(\tfrac{d}{2})$ must also be invertible by the uniqueness of solutions to differential equations.

\subsection{Inhomogeneous Jacobi equations}
Given a smooth path of $n\times n$ matrices $M(t),$ we consider the inhomogeneous equation 
\begin{align}\label{inhomogenousjacobimatrix-equ}
    J''+RJ=M. 
\end{align}
It is possible to solve such an equation in terms of the homogeneous basis solutions $J^{1,0},J^{0,1}$ and a particular solution. We can compute a particular solution by integration. 
\begin{lem}\label{solutionofsecondorderode}
    Suppose that $J$ solves \eqref{inhomogenousjacobimatrix-equ}, then one has that
\begin{align*}
    J(s)&=J^{1,0}(s)J(\tfrac{-d}{2})+J^{0,1}(s)J(\tfrac{d}{2})\\
    &\quad +J^{1,0}(s)\int_{\tfrac{-d}{2}}^sJ^{0,1}(t)[(J^{1,0})'(\tfrac{d}{2})]^{-1}{M(t)}\, dt+J^{0,1}(s)\int_s^{\tfrac{d}{2}}J^{1,0}(t)[(J^{1,0})'(\tfrac{d}{2})]^{-1}M(t)\, dt.
\end{align*}
\end{lem}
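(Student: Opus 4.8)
The statement is a standard variation-of-parameters formula for the inhomogeneous linear second-order system $J'' + RJ = M$, adapted to two-point boundary data rather than initial data, and the proof is a direct verification. The plan is to define $\widetilde J(s)$ to be the right-hand side of the claimed formula and check three things: (i) $\widetilde J$ solves \eqref{inhomogenousjacobimatrix-equ}; (ii) $\widetilde J$ takes the prescribed values $J(-\tfrac d2)$ and $J(\tfrac d2)$ at the endpoints; and (iii) the solution with these boundary values is unique, so $J = \widetilde J$. Since $x,y$ are not conjugate along $\gamma$, the homogeneous two-point boundary value problem $J''+RJ=0$ with $J(-\tfrac d2)=J(\tfrac d2)=0$ has only the trivial solution, which gives (iii) immediately; this is exactly the existence-and-uniqueness statement quoted from \cite[Page 118, Prop. 3.9]{doCarmo} together with the invertibility of $(J^{1,0})'(\tfrac d2)$ recorded at the end of Subsection \ref{Notation subsection}.

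For (ii), I would evaluate the formula at $s=-\tfrac d2$ and $s=\tfrac d2$. Using $J^{1,0}(-\tfrac d2)=I_n$, $J^{1,0}(\tfrac d2)=0_n$, $J^{0,1}(-\tfrac d2)=0_n$, $J^{0,1}(\tfrac d2)=I_n$, the boundary terms $J^{1,0}(s)J(-\tfrac d2)+J^{0,1}(s)J(\tfrac d2)$ reproduce $J(-\tfrac d2)$ and $J(\tfrac d2)$ respectively, while the two integral terms vanish at the corresponding endpoint: at $s=-\tfrac d2$ the first integral is over an empty interval and the second is premultiplied by $J^{0,1}(-\tfrac d2)=0_n$; at $s=\tfrac d2$ the first integral is premultiplied by $J^{1,0}(\tfrac d2)=0_n$ and the second is over an empty interval.

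For (i), I would differentiate $\widetilde J$ twice. Differentiating the integral terms produces boundary contributions from the variable limits together with the integrals of the differentiated kernels; the key algebraic point is that the boundary contributions to $\widetilde J'$ cancel because the integrand is continuous across $s$ and the two integrals meet at $t=s$ — concretely, the $J^{1,0}(s)[\cdots](s)$ and $J^{0,1}(s)[\cdots](s)$ terms combine into $\big(J^{1,0}(s)J^{0,1}(s) - J^{0,1}(s)J^{1,0}(s)\big)[(J^{1,0})'(\tfrac d2)]^{-1}M(s)$, and one checks via the Wronskian-type identity that $J^{1,0}(s)J^{0,1}(s)-J^{0,1}(s)J^{1,0}(s)$ is constant (its derivative is $J^{1,0}{}'J^{0,1}-J^{0,1}{}'J^{1,0}+J^{1,0}J^{0,1}{}'-J^{0,1}J^{1,0}{}' $, which using $J''=-RJ$ has vanishing derivative as well) and equals its value at $s=-\tfrac d2$, namely $0_n$. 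Differentiating once more, the surviving boundary term is $\big((J^{1,0})'(s)J^{0,1}(s) - (J^{0,1})'(s)J^{1,0}(s)\big)[(J^{1,0})'(\tfrac d2)]^{-1}M(s)$; evaluating the bracket — again constant in $s$ by the same Wronskian computation — at $s=\tfrac d2$ gives $(J^{1,0})'(\tfrac d2)\cdot I_n - (J^{0,1})'(\tfrac d2)\cdot 0_n = (J^{1,0})'(\tfrac d2)$, so this term equals $M(s)$, while the remaining terms reassemble into $-R\widetilde J$ since $J^{1,0},J^{0,1}$ solve the homogeneous equation. Hence $\widetilde J'' + R\widetilde J = M$.

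\textbf{Main obstacle.} There is no conceptual difficulty; the only thing to be careful about is the bookkeeping in the matrix (non-commutative) setting — in particular getting the order of the factors $J^{1,0}(t)$, $[(J^{1,0})'(\tfrac d2)]^{-1}$, $M(t)$ right so that the Wronskian identities used to kill the spurious boundary terms actually hold, and confirming that $J^{1,0}(s)J^{0,1}(s)-J^{0,1}(s)J^{1,0}(s)$ and $(J^{1,0})'(s)J^{0,1}(s)-(J^{0,1})'(s)J^{1,0}(s)$ are genuinely $s$-independent. This is where I would spend the most care, and it is the step most likely to need the symmetry $R(t)=R(t)^{\mathsf T}$ (equivalently, that $R$ comes from the curvature operator), which makes the relevant bilinear pairings skew and hence constant.
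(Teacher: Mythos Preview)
Your approach is essentially the same as the paper's: define the right-hand side, check the boundary values, and verify by direct differentiation that it solves $J''+RJ=M$, using that the Wronskian-type expression $(J^{1,0})'J^{0,1}-(J^{0,1})'J^{1,0}$ is constant in $s$ and equals $(J^{1,0})'(\tfrac d2)$ at $s=\tfrac d2$. Your extra care about the first-derivative boundary terms and uniqueness is fine but not strictly needed, and your caveat about the non-commutative setting is well-taken---the paper asserts the constancy of that bracket without further comment, and in any event the lemma is only ever applied in the two-dimensional (scalar) case where the issue does not arise.
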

 This lemma is analogous to \cite[Lemma 3.2]{figalli2012nearly}, which solved this inhomogeneous problem in terms of initial conditions instead of boundary conditions.
 
\begin{proof} Denote
    \begin{align*}
    F(s)&=J^{1,0}(s)J(\tfrac{-d}{2})+J^{0,1}(s)J(\tfrac{d}{2})\\
    &\quad +J^{1,0}(s)\int_{\tfrac{-d}{2}}^sJ^{0,1}(t)[(J^{1,0})'(\tfrac{d}{2})]^{-1}{M(t)}\, dt+J^{0,1}(s)\int_s^{\tfrac{d}{2}}J^{1,0}(t)[(J^{1,0})'(\tfrac{d}{2})]^{-1}M(t)\, dt.
\end{align*}
Then we see that $F$ coincides with $J$ at the boundary. It thus remains to verify that $F$ solves equation \eqref{inhomogenousjacobimatrix-equ}.
A straightforward computation shows that \begin{align*}
 F''(s) &=-R(s) J^{1,0}(s)J(\tfrac{-d}{2})-R(s) J^{0,1}(s)J(\tfrac{d}{2})\\
 &\quad  -R(s) J^{1,0}(s)\int_{\tfrac{-d}{2}}^sJ^{0,1}(t)[(J^{1,0})'(\tfrac{d}{2})]^{-1}{M(t)}\, dt+(J^{1,0})'(s)J^{0,1}(s)[(J^{1,0})'(\tfrac{d}{2})]^{-1}{M(s)}\\
    &\quad -R(s) J^{0,1}(s)\int_s^{\tfrac{d}{2}}J^{1,0}(t)[(J^{1,0})'(\tfrac{d}{2})]^{-1}{M(t)}\, dt-(J^{0,1})'(s)J^{1,0}(s)[(J^{1,0})'(\tfrac{d}{2})]^{-1}{M(s)}.
    \end{align*}
Hence, 
\begin{align*}
    &F''(s)+R(s)F(s)\\
    &=\Bigl[(J^{1,0})'(s)J^{0,1}(s)-(J^{0,1})'(s)J^{1,0}(s)\Bigr][(J^{1,0})'(\tfrac{d}{2})]^{-1}{M(s)}. 
\end{align*}
However, note that 
\begin{align*}
    \frac{d}{ds}\Bigl[(J^{1,0})'(s)J^{0,1}(s)-(J^{0,1})'(s)J^{1,0}(s)\Bigr]\equiv 0
\end{align*}
and therefore
\begin{align*}
    F''(s)+RF(s)=M(s).
\end{align*}
\end{proof}

\subsection{Two-point Jacobi field comparison}
We now present several comparison results concerning 
Jacobi fields 
\begin{align*}
    J''+\kappa J=0 \quad \textup{in  }(0,l)
\end{align*}
 These are the analogues of the Sturm comparison theorem for Jacobi equations (see, e.g., \cite[Pages 238-239]{doCarmo}) except that we will need to consider boundary value conditions instead of initial value conditions. We first prove a comparison for the solutions of the type $J=J^{1,0}$ or $J=J^{0,1}:$

 \begin{lem}\label{left-endpoint-jacobi-comparison}
 For $i =1,2$, let $J_i$ satisfy 
\begin{align*}
    J_i''+k_iJ_i=0\quad \textup{on} \quad (0,l) 
\end{align*}
with $J_i(0)=1,$ and $J_i(l)=0$.
Suppose that $k_1\leq k_2$ and $J_1,J_2>0$ in $(0,l)$. 
Then 
 \begin{align*}
     J_2\geq J_1 \quad \textup{in }
(0,l). \end{align*}
 \end{lem}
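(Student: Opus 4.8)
The plan is to use a standard Sturm-type comparison argument based on the Wronskian, adapted to the boundary-value setting. First I would introduce the Wronskian $W(s) = J_1'(s) J_2(s) - J_1(s) J_2'(s)$ along $(0,l)$. Differentiating and using the two equations $J_i'' = -k_i J_i$, one finds
\[
W'(s) = J_1''(s) J_2(s) - J_1(s) J_2''(s) = (k_2 - k_1) J_1(s) J_2(s) \geq 0,
\]
since $k_1 \leq k_2$ and $J_1, J_2 > 0$ on $(0,l)$. Thus $W$ is non-decreasing on $(0,l)$.

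Next I would evaluate $W$ near the endpoints. At $s = 0$ we have $J_1(0) = J_2(0) = 1$, so $W(0) = J_1'(0) - J_2'(0)$; this does not immediately have a sign, so instead I would work from the right endpoint $s = l$, where $J_1(l) = J_2(l) = 0$. Then $W(l) = J_1'(l) J_2(l) - J_1(l) J_2'(l) = 0$. Since $W$ is non-decreasing and $W(l) = 0$, we get $W(s) \leq 0$ for all $s \in (0, l)$. Now consider the quotient $q(s) = J_1(s)/J_2(s)$, which is well-defined and smooth on $(0,l)$ because $J_2 > 0$ there. Its derivative is
\[
q'(s) = \frac{J_1'(s) J_2(s) - J_1(s) J_2'(s)}{J_2(s)^2} = \frac{W(s)}{J_2(s)^2} \leq 0,
\]
so $q$ is non-increasing on $(0,l)$. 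Since $q(0^+) = 1$ (both functions tend to $1$ at $0$), it follows that $q(s) \leq 1$, i.e. $J_1(s) \leq J_2(s)$ on $(0,l)$, which is the desired conclusion.

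The main technical point to handle carefully is the behavior at the endpoint $s = 0$: strictly speaking $q$ is defined on the open interval, so I would note that $J_1, J_2$ extend smoothly to $s = 0$ with value $1$, hence $\lim_{s \to 0^+} q(s) = 1$ and the monotonicity of $q$ on $(0,l)$ gives $q \leq 1$ on all of $(0,l)$. A subtlety worth a sentence is that the hypothesis $J_1, J_2 > 0$ on $(0,l)$ is exactly what makes both $W' \geq 0$ and the quotient argument valid; without it the sign of $W'$ could fail. Everything else is routine, so I do not expect a genuine obstacle here — this is the elementary base case of the comparison theory, and the real difficulties in the paper come later when $\kappa$ varies along the geodesic and one must track derivatives of $\mathcal{C}(x,y)$.
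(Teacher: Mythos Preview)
Your proof is correct and follows essentially the same Sturm--Wronskian approach as the paper. The only cosmetic difference is that the paper phrases the monotonicity step in terms of the logarithmic derivatives $J_i'/J_i$ (integrating $\tfrac{J_2'}{J_2}\ge \tfrac{J_1'}{J_1}$ from $t_0$ to $t$ and letting $t_0\to 0$), whereas you track the quotient $q=J_1/J_2$ directly; both arguments use $W(l)=0$ and $W'\ge 0$ in the same way.
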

 \begin{proof} The Jacobi field equations imply that \begin{equation}
 J_2''J_1-J_1''J_2+(k_2-k_1)J_1J_2 =0.  \label{JJ-equ}
 \end{equation}
Integrating this equation from $t$ to $l$ gives \begin{align*}
     0&=\int_t^lJ_2''J_1-J_1''J_2+(k_2-k_1)J_1J_2\, ds\\
     &\geq \int_t^l [J_2'J_1-J_1'J_2]'\,ds\\
     &=-J_2'(t)J_1(t)+J_1'(t)J_2(t).
 \end{align*}
In other words, we have that
 \begin{align*}
     \frac{J_2'(t)}{J_2(t)}\geq \frac{J_1'(t)}{J_1(t)}.
 \end{align*}
 Integrating this inequality from $t_0$ to $t$ implies that 
 \begin{align*}
     \log J_2(t)-\log J_2(t_0)\geq  \log J_1(t)-\log J_1(t_0).
 \end{align*}
 Finally, taking the limit as $t_0 \rightarrow 0,$ we obtain the result. 
 \end{proof}

The same result holds (with a nearly identical proof) if we instead assume that $J_i(0)=0,$ and  $J_i(l)=1$.

 This gives the following Jacobi field comparison result, which will be used repeatedly throughout the paper.
 \begin{lem}\label{two-point-jacobi-comparison}
 Let 
\begin{align*}
    J_1''+k_1J_1=0\quad \textup{with }\quad J_1(0)=1,\, J_1(l)=1
\end{align*}and let 
\begin{align*}
    J_2''+k_2J_2=0\quad \textup{with }\quad J_2(0)=1, \, J_2(l)=1.
\end{align*} Suppose that $k_1\leq k_2$ and suppose that $J_1,J_2>0$ in $(0,l),$ then one has that 
 \begin{align*}
     J_2\geq J_1 \quad \textup{in }
(0,l). \end{align*}
 \end{lem}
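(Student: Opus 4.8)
The plan is to derive Lemma~\ref{two-point-jacobi-comparison} from Lemma~\ref{left-endpoint-jacobi-comparison} by splitting each $J_i$ into its two ``endpoint'' components. First, note that the hypothesis $J_1,J_2>0$ on $(0,l)$, together with $J_i(0)=J_i(l)=1$, forces $0$ and $l$ to be non-conjugate for each $k_i$: a nontrivial Jacobi field vanishing at both endpoints would, by the Sturm separation theorem, force $J_i$ (which is linearly independent from it) to vanish somewhere in $(0,l)$. Consequently the boundary value problem for $J''+k_iJ=0$ on $[0,l]$ is uniquely solvable, and linearity lets us write $J_i=J_i^{1,0}+J_i^{0,1}$, where $J_i^{1,0}$ solves $J''+k_iJ=0$ with boundary data $(1,0)$ and $J_i^{0,1}$ with boundary data $(0,1)$.

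Next I would verify that each of $J_1^{1,0},J_2^{1,0},J_1^{0,1},J_2^{0,1}$ is positive on $(0,l)$, since this is exactly what Lemma~\ref{left-endpoint-jacobi-comparison} requires. For $J_i^{1,0}$: it vanishes at $l$ and equals $1$ at $0$; if it vanished also at some $s_0\in(0,l)$, then by Sturm separation $J_i$ would vanish in $(s_0,l)$, contradicting positivity. The symmetric argument handles $J_i^{0,1}$. Granting this, Lemma~\ref{left-endpoint-jacobi-comparison} (using $k_1\le k_2$) gives $J_2^{1,0}\ge J_1^{1,0}$ on $(0,l)$, and the variant of that lemma noted right after its proof gives $J_2^{0,1}\ge J_1^{0,1}$ on $(0,l)$. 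Adding the two inequalities yields $J_2=J_2^{1,0}+J_2^{0,1}\ge J_1^{1,0}+J_1^{0,1}=J_1$ on $(0,l)$, which is the claim.

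I expect the only genuine obstacle to be the positivity bookkeeping for the endpoint solutions (equivalently, the non-conjugacy of $0$ and $l$ for each $k_i$); the rest is linearity and an appeal to Lemma~\ref{left-endpoint-jacobi-comparison}. If one prefers to avoid the decomposition altogether, there is a direct argument in the spirit of the proof of Lemma~\ref{left-endpoint-jacobi-comparison}: set $W=J_2'J_1-J_1'J_2$, so that $W'=(k_1-k_2)J_1J_2\le 0$ on $(0,l)$, whence $\left(J_2/J_1\right)'=W/J_1^2$ changes sign at most once, from positive to negative. Thus $J_2/J_1$ is first nondecreasing and then nonincreasing on $[0,l]$ (note $J_1>0$ on the closed interval because $J_1(0)=J_1(l)=1$), so on $(0,l)$ it is bounded below by its two boundary values $\lim_{t\to 0}J_2/J_1=\lim_{t\to l}J_2/J_1=1$, giving $J_2\ge J_1$ immediately.
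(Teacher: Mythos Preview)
Your proposal is correct. The paper itself gives no explicit proof of this lemma, writing only that Lemma~\ref{left-endpoint-jacobi-comparison} ``gives'' it; your first argument---decomposing $J_i=J_i^{1,0}+J_i^{0,1}$ and applying Lemma~\ref{left-endpoint-jacobi-comparison} (and its mirror version) to each summand---is exactly the intended route, and your Sturm-separation check of positivity for the pieces fills in the one detail the paper leaves implicit.

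Your second argument is a genuinely different, self-contained route: it bypasses Lemma~\ref{left-endpoint-jacobi-comparison} entirely by noting that $W=J_2'J_1-J_1'J_2$ is nonincreasing (since $W'=(k_1-k_2)J_1J_2\le 0$), so $J_2/J_1$ is unimodal with boundary values $1$, hence $\ge 1$. This is slightly slicker in that it avoids the decomposition and the positivity bookkeeping for the endpoint solutions, at the modest cost of not directly reusing the paper's Lemma~\ref{left-endpoint-jacobi-comparison}. Either approach is perfectly adequate here.
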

 \begin{rem}\label{derivative-comparsion-remark}
 This result also provides a comparison for the derivatives at the endpoints. In particular, we find that
 \begin{align}
     J_1'(\tfrac{-d_0}{2})\leq J_2'(\tfrac{-d_0}{2})\quad \textup{as well as }\quad  J_1'(\tfrac{d_0}{2})\geq J_2'(\tfrac{d_0}{2}).  \label{J-derivative-comp} 
 \end{align}
 \end{rem}

Applying the comparison results when the curvature satisfies $\underline \kappa\leq \kappa\leq \overline \kappa$, we have the following estimates.
\begin{prop}\label{J-estimate}
    For $\underline \kappa\leq \kappa\leq \overline \kappa,$ and  $d<\tfrac{\pi}{\sqrt{\overline \kappa}}$ (if $\overline \kappa \leq 0,$ for any $d<\infty),$ one has that for  $s\in[-\tfrac d2, \tfrac d2 ]$
    \begin{eqnarray}      \frac{\textup{cs}_{\underline \kappa}(s)}{\textup{cs}_{\underline \kappa}(\tfrac d2)}&\leq J(s)&\leq\frac{\textup{cs}_{\overline \kappa}(s)}{\textup{cs}_{\overline \kappa}(\tfrac d2)} \\
    \frac{-\textup{sn}_{\underline \kappa}(-\tfrac d2+s)}{\textup{sn}_{\underline \kappa}( d)}&\leq J^{1,0}(s)&\leq\frac{-\textup{sn}_{\overline \kappa}(-\tfrac d2+s)}{\textup{sn}_{\overline \kappa}( d)} \\
     \frac{\textup{sn}_{\underline \kappa}(\tfrac d2+s)}{\textup{sn}_{\underline \kappa}( d)}&\leq J^{0,1}(s)&\leq\frac{\textup{sn}_{\overline \kappa}(\tfrac d2+s)}{\textup{sn}_{\overline \kappa}( d)} 
    \end{eqnarray}
where we denote
\begin{align*}
    \textup{sn}_K(s)=\begin{cases}
        \tfrac{\sin(\sqrt{K}s)}{\sqrt{K}} \quad &\textup{for }K>0\\
        s\quad &\textup{for }K=0\\
        \tfrac{\sinh(\sqrt{-K}s)}{\sqrt{-K}}\quad &\textup{for }K<0,
    \end{cases}
\end{align*}
 $\textup{cs}_{K}(s)=\textup{sn}_K'(s)$ and $\textup{tn}_K(s)=K\tfrac{\textup{sn}_K(s)}{\textup{cs}_K(s)}.$

    \end{prop}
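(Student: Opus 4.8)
The plan is to prove Proposition~\ref{J-estimate} by reducing each of the three estimates to the two-point comparison Lemma~\ref{two-point-jacobi-comparison} (together with Lemma~\ref{left-endpoint-jacobi-comparison}), applied after identifying the explicit model Jacobi fields on a space of constant curvature. First I would observe that on a space form of constant curvature $K$, the Jacobi field along a unit-speed geodesic solving $J'' + K J = 0$ with symmetric boundary data $J(-\tfrac d2)=J(\tfrac d2)=1$ is exactly $J_K(s) = \textup{cs}_K(s)/\textup{cs}_K(\tfrac d2)$; this is checked directly, since $\textup{cs}_K$ solves $\textup{cs}_K'' + K\,\textup{cs}_K = 0$, is even, and the normalization makes the boundary values $1$. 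The hypothesis $d < \pi/\sqrt{\overline\kappa}$ guarantees $\textup{cs}_{\overline\kappa}(\tfrac d2) > 0$ and hence that $J_{\overline\kappa} > 0$ on $[-\tfrac d2,\tfrac d2]$; since $\underline\kappa \le \overline\kappa$ and $\textup{cs}_{\underline\kappa} \ge \textup{cs}_{\overline\kappa} > 0$ on this interval, $J_{\underline\kappa}>0$ there too. Thus both comparison fields are positive, $J$ itself is positive (same argument, using $\kappa \le \overline\kappa$), and Lemma~\ref{two-point-jacobi-comparison} applies twice — comparing $J$ against $J_{\underline\kappa}$ (with $\underline\kappa \le \kappa$) to get the lower bound and against $J_{\overline\kappa}$ (with $\kappa \le \overline\kappa$) to get the upper bound — yielding the first chain of inequalities.

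For the second and third estimates, concerning $J^{1,0}$ and $J^{0,1}$, I would identify the corresponding model solutions. On a space form of curvature $K$, the solution of $J'' + KJ = 0$ with $J(-\tfrac d2)=1$, $J(\tfrac d2)=0$ is $J^{1,0}_K(s) = \textup{sn}_K(\tfrac d2 - s)/\textup{sn}_K(d)$: indeed $s \mapsto \textup{sn}_K(\tfrac d2 - s)$ solves the ODE, vanishes at $s=\tfrac d2$, and equals $\textup{sn}_K(d)$ at $s=-\tfrac d2$, so dividing by $\textup{sn}_K(d)$ fixes the normalization. Note $\textup{sn}_K(\tfrac d2 - s) = -\textup{sn}_K(s - \tfrac d2) = -\textup{sn}_K(-\tfrac d2 + s - d)$; I would rewrite this to match the paper's stated form $-\textup{sn}_K(-\tfrac d2 + s)/\textup{sn}_K(d)$ — here one must be slightly careful with the sign conventions, and I would double-check the argument of $\textup{sn}$ in the displayed inequality, since as written the numerator $-\textup{sn}_{\underline\kappa}(-\tfrac d2 + s)$ is negative for $s$ near $-\tfrac d2$, which would need reconciling (likely the intended model is $\textup{sn}_K(\tfrac d2 - s)/\textup{sn}_K(d)$, which is what I would use). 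Similarly the model for $J^{0,1}$ with $J(-\tfrac d2)=0$, $J(\tfrac d2)=1$ is $\textup{sn}_K(\tfrac d2 + s)/\textup{sn}_K(d)$. Since the hypothesis $d < \pi/\sqrt{\overline\kappa}$ makes $\textup{sn}_{\overline\kappa}(t) > 0$ for $t \in (0,d]$ and hence $\textup{sn}_{\underline\kappa}(t)>0$ on the same range, all these model fields are positive on $(-\tfrac d2, \tfrac d2)$, and $J^{1,0}, J^{0,1}$ are positive on $(-\tfrac d2,\tfrac d2)$ because there are no conjugate points (as noted in Subsection~\ref{Notation subsection}). Then Lemma~\ref{left-endpoint-jacobi-comparison} — in its stated form for the $J(0)=1,J(l)=0$ normalization and in the "nearly identical" variant for $J(0)=0,J(l)=1$ mentioned right after the lemma — applied after the affine reparametrization $t \mapsto t - \tfrac d2$ sending $[-\tfrac d2,\tfrac d2]$ to $[0,d]$, gives the two-sided bounds on $J^{1,0}$ and $J^{0,1}$.

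The bulk of the argument is thus bookkeeping: setting up the reparametrization, verifying the model solutions solve the ODE with the right boundary data, and confirming positivity of all fields involved on the relevant interval using $d < \pi/\sqrt{\overline\kappa}$ together with monotonicity of $\textup{sn}_K, \textup{cs}_K$ in $K$. I would also remark on the degenerate cases $\overline\kappa \le 0$ (where $\textup{sn}, \textup{cs}$ never vanish for $d$ finite, so positivity is automatic) and $K = 0$ (where the formulas reduce to the linear/constant expressions). The main obstacle — really the only subtle point — is getting the signs and arguments of the $\textup{sn}$ terms in the $J^{1,0}$ bound to match the displayed statement exactly, since a naive substitution produces $\textup{sn}_K(\tfrac d2 - s)$ rather than $-\textup{sn}_K(-\tfrac d2 + s)$, and one must decide whether the paper intends $\textup{sn}_K(\tfrac d2 - s) = \textup{sn}_K(-(-\tfrac d2 + s))$ written via the oddness of $\textup{sn}$ as $-\textup{sn}_K(-\tfrac d2 + s)$ restricted to the sub-range where this is nonnegative, or whether there is a typo; in the writeup I would use the manifestly correct form $\textup{sn}_K(\tfrac d2 \mp s)/\textup{sn}_K(d)$ and note the equivalence. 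Everything else follows directly from the comparison lemmas already established.
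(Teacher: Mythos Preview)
Your approach is correct and is essentially what the paper does: the proposition is stated immediately after Lemmas~\ref{left-endpoint-jacobi-comparison} and~\ref{two-point-jacobi-comparison} with the preamble ``Applying the comparison results when the curvature satisfies $\underline\kappa\le\kappa\le\overline\kappa$, we have the following estimates,'' so the proof amounts to identifying the constant-curvature model fields and invoking those lemmas, exactly as you outline.

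One small clarification: your worry about the sign of the $J^{1,0}$ bound is unfounded. Since $\textup{sn}_K$ is odd, $-\textup{sn}_K(-\tfrac d2+s)=\textup{sn}_K(\tfrac d2-s)$ identically, so the paper's displayed expression and your ``manifestly correct'' form are the same function; in particular at $s=-\tfrac d2$ the numerator is $-\textup{sn}_K(-d)=\textup{sn}_K(d)>0$, not negative. There is no typo and no restriction to a sub-range is needed.
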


Combining Remark \ref{derivative-comparsion-remark} and Proposition \ref{J-estimate}, one also obtains an estimate on the derivatives at the end points. 
\begin{prop}\label{J-derivative-comparison}
    Under the same assumptions as in Proposition \ref{J-estimate}, one has that 
    \begin{align*}
        \textup{tn}_{\underline \kappa}(\tfrac d2)\leq J'(-\tfrac d2)\leq \textup{tn}_{\overline \kappa}(\tfrac d2) \quad \textup{and }\quad  -\textup{tn}_{\underline \kappa}(\tfrac d2)\geq J'(\tfrac d2)\geq -\textup{tn}_{\overline \kappa}(\tfrac d2)
    \end{align*}
    and that 
    \begin{align*}
       \frac{1}{\textup{sn}_{\underline \kappa }(d)}\leq  (J^{0,1})'(-\tfrac d2)\leq  \frac{1}{\textup{sn}_{\overline \kappa }(d)} \quad \textup{and}\quad \frac{\textup{cs}_{\overline \kappa}(d)}{\textup{sn}_{\overline \kappa}( d)} \leq (J^{0,1})'(\tfrac d2)\leq \frac{\textup{cs}_{\underline \kappa}(d)}{\textup{sn}_{\underline \kappa}( d)}.
    \end{align*}
\end{prop}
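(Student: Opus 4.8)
The plan is to deduce Proposition~\ref{J-derivative-comparison} directly from Proposition~\ref{J-estimate} together with Remark~\ref{derivative-comparsion-remark}, rather than rerunning the Sturm-type argument. First I would observe that for the normalized Jacobi field $J$ with $J(-\tfrac d2) = J(\tfrac d2) = 1$, the estimates in Proposition~\ref{J-estimate} give pointwise two-sided bounds $J_{\underline\kappa}(s) \le J(s) \le J_{\overline\kappa}(s)$ on $[-\tfrac d2,\tfrac d2]$, where $J_{\underline\kappa}(s) = \textup{cs}_{\underline\kappa}(s)/\textup{cs}_{\underline\kappa}(\tfrac d2)$ and similarly for $\overline\kappa$. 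Since all three functions agree at the endpoint $s = -\tfrac d2$ (each equals $1$) and $J$ lies above $J_{\underline\kappa}$ and below $J_{\overline\kappa}$ on the interior, comparing one-sided difference quotients at $s = -\tfrac d2$ forces $J_{\underline\kappa}'(-\tfrac d2) \le J'(-\tfrac d2) \le J_{\overline\kappa}'(-\tfrac d2)$; symmetrically at $s = \tfrac d2$ the inequalities flip. Then it is a routine trigonometric computation that $J_{K}'(-\tfrac d2) = \textup{tn}_K(\tfrac d2)$ and $J_K'(\tfrac d2) = -\textup{tn}_K(\tfrac d2)$ using $\textup{cs}_K' = -K\,\textup{sn}_K$ and the definition of $\textup{tn}_K$; this yields the first pair of inequalities. (Alternatively, one can invoke Remark~\ref{derivative-comparsion-remark} applied to $J$, $J_{\underline\kappa}$, $J_{\overline\kappa}$ pairwise, which already packages the endpoint-derivative comparison.)

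For the second pair of inequalities, concerning $(J^{0,1})'$ at the two endpoints, I would proceed identically but using the bounds on $J^{0,1}$ from Proposition~\ref{J-estimate}: $\textup{sn}_{\underline\kappa}(\tfrac d2 + s)/\textup{sn}_{\underline\kappa}(d) \le J^{0,1}(s) \le \textup{sn}_{\overline\kappa}(\tfrac d2 + s)/\textup{sn}_{\overline\kappa}(d)$. At $s = -\tfrac d2$ all three vanish, so comparing difference quotients gives the derivative comparison there; at $s = \tfrac d2$ all three equal $1$, so again the difference quotients can be compared, with the direction of the inequality reversed. Differentiating the comparison functions, $\frac{d}{ds}\big[\textup{sn}_K(\tfrac d2 + s)/\textup{sn}_K(d)\big] = \textup{cs}_K(\tfrac d2 + s)/\textup{sn}_K(d)$; evaluating at $s = -\tfrac d2$ gives $\textup{cs}_K(0)/\textup{sn}_K(d) = 1/\textup{sn}_K(d)$, and at $s = \tfrac d2$ gives $\textup{cs}_K(d)/\textup{sn}_K(d)$. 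Keeping track of which of $\underline\kappa,\overline\kappa$ gives the upper versus lower bound (and noting $\textup{sn}_K(d) > 0$ in the allowed range, so dividing preserves inequalities) produces exactly the stated chain $\tfrac{1}{\textup{sn}_{\underline\kappa}(d)} \le (J^{0,1})'(-\tfrac d2) \le \tfrac{1}{\textup{sn}_{\overline\kappa}(d)}$ and $\tfrac{\textup{cs}_{\overline\kappa}(d)}{\textup{sn}_{\overline\kappa}(d)} \le (J^{0,1})'(\tfrac d2) \le \tfrac{\textup{cs}_{\underline\kappa}(d)}{\textup{sn}_{\underline\kappa}(d)}$.

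The only genuinely delicate point — and the one I would be most careful about — is the sign bookkeeping: when two functions agree at a left endpoint and one dominates the other to the right, the dominating function has the larger (one-sided) derivative there, but at a right endpoint where they agree the dominating function has the \emph{smaller} derivative. This is where the inequalities reverse between the "$-\tfrac d2$" and "$+\tfrac d2$" statements, and it is easy to flip a sign. I would state this comparison lemma once cleanly (e.g. "if $f \le g$ on $[a,b]$ with $f(a) = g(a)$ then $f'(a) \le g'(a)$, and if $f(b) = g(b)$ then $f'(b) \ge g'(b)$", valid whenever the one-sided derivatives exist) and then apply it mechanically in all four cases, which reduces the proof to the trigonometric identities for $\textup{sn}_K, \textup{cs}_K, \textup{tn}_K$ listed above. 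Everything else is routine; no new geometric input beyond Proposition~\ref{J-estimate} is needed.
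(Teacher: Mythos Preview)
Your proposal is correct and is precisely the approach the paper takes: the paper states that Proposition~\ref{J-derivative-comparison} follows by ``combining Remark~\ref{derivative-comparsion-remark} and Proposition~\ref{J-estimate},'' and you have simply spelled out those details, including the explicit evaluation of the comparison functions' derivatives at the endpoints. The alternative you mention (invoking Remark~\ref{derivative-comparsion-remark} pairwise) is exactly what the paper has in mind, and your more explicit difference-quotient argument is an equivalent unpacking of that remark.
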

From these estimates, we have the following comparison for the quantity $\mathcal C$.
 \begin{lem} \label{TwopointGaussiancomparison}
    A surface $(M^2,g)$ satisfies
    \begin{align}\label{positivityofcurvature}
   \mathcal  C(x,y)\leq -\textup{tn}_{\underline \kappa}(\tfrac{ d(x,y)}{2}) &\iff \kappa\geq \underline \kappa\\
   \mathcal  C(x,y)\geq -\textup{tn}_{\overline \kappa}(\tfrac{ d(x,y)}{2}) &\iff \kappa\leq \overline \kappa. \nonumber
\end{align} 
 \end{lem}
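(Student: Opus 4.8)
\textbf{Proof proposal for Lemma~\ref{TwopointGaussiancomparison}.}

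The plan is to prove the chain of equivalences by relating $\mathcal{C}(x,y)$ directly to the derivatives of the normalized Jacobi field $J=J_{x,y}$ solving \eqref{Jacobi-ODE}, and then comparing $J$ against the constant-curvature model solutions. Recall from \eqref{def-of-C} that $\mathcal{C}(x,y)=\tfrac12\bigl(J'(\tfrac d2)-J'(-\tfrac d2)\bigr)$. First I would observe that the comparison model: for constant curvature $K$, the solution of $\bar J''+K\bar J=0$ with $\bar J(-\tfrac d2)=\bar J(\tfrac d2)=1$ is $\bar J(s)=\textup{cs}_K(s)/\textup{cs}_K(\tfrac d2)$, which is exactly the function appearing in the first line of Proposition~\ref{J-estimate}. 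A direct computation then gives $\tfrac12\bigl(\bar J'(\tfrac d2)-\bar J'(-\tfrac d2)\bigr)=\textup{cs}_K'(\tfrac d2)/\textup{cs}_K(\tfrac d2)=-K\,\textup{sn}_K(\tfrac d2)/\textup{cs}_K(\tfrac d2)=-\textup{tn}_K(\tfrac d2)$, using $\textup{cs}_K''=-K\,\textup{cs}_K$. So the right-hand sides of \eqref{positivityofcurvature} are precisely the value of $\mathcal{C}$ for the constant-curvature comparison spaces, and the statement reduces to a monotonicity assertion for $\mathcal{C}$ under pointwise bounds on $\kappa$.

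For the forward direction ($\Rightarrow$) I would proceed by contrapositive, or more cleanly, I would establish the two implications separately as genuine comparisons. Assume $\kappa\ge\underline\kappa$ along $\gamma_{x,y}$. Apply Lemma~\ref{two-point-jacobi-comparison} with $k_1=\underline\kappa$ (constant), $k_2=\kappa(\gamma_{x,y}(s))$, and $l=d$ (after shifting the parameter interval $[-\tfrac d2,\tfrac d2]$ to $[0,d]$), noting that positivity of both solutions on the open interval holds by the diameter/injectivity-radius hypothesis $D\le\tfrac{\pi}{2\sqrt{\overline\kappa}}$ (so $d<\tfrac{\pi}{\sqrt{\overline\kappa}}$ and the relevant $\textup{cs}$ functions stay positive). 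This gives $J\ge\underline J$ on the open interval, where $\underline J=\textup{cs}_{\underline\kappa}(s)/\textup{cs}_{\underline\kappa}(\tfrac d2)$, with equality at both endpoints. By Remark~\ref{derivative-comparsion-remark}, $J'(-\tfrac d2)\ge\underline J'(-\tfrac d2)$ and $J'(\tfrac d2)\le\underline J'(\tfrac d2)$, hence
\begin{align*}
\mathcal{C}(x,y)=\tfrac12\bigl(J'(\tfrac d2)-J'(-\tfrac d2)\bigr)\le\tfrac12\bigl(\underline J'(\tfrac d2)-\underline J'(-\tfrac d2)\bigr)=-\textup{tn}_{\underline\kappa}(\tfrac d2).
\end{align*}
The second implication is symmetric: if $\kappa\le\overline\kappa$, compare with $k_1=\kappa$, $k_2=\overline\kappa$ to get $J\le\overline J$, reverse the endpoint-derivative inequalities, and conclude $\mathcal{C}(x,y)\ge-\textup{tn}_{\overline\kappa}(\tfrac{d}{2})$.

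For the reverse direction ($\Leftarrow$) I would use the infinitesimal limit \eqref{collapse-of-C}: if $\mathcal{C}(x,y)\le-\textup{tn}_{\underline\kappa}(\tfrac{d(x,y)}{2})$ holds for \emph{all} $x,y$, then dividing by $d(x,y)$ and letting $y\to x$ gives $-\tfrac{\kappa(x)}{2}\le\lim_{y\to x}\bigl(-\textup{tn}_{\underline\kappa}(\tfrac{d(x,y)}{2})/d(x,y)\bigr)=-\tfrac{\underline\kappa}{2}$, i.e. $\kappa(x)\ge\underline\kappa$ for every $x$; the $\overline\kappa$ case is identical. The main obstacle I anticipate is purely bookkeeping: being careful that the hypotheses of Lemma~\ref{two-point-jacobi-comparison} (positivity of \emph{both} Jacobi fields on the open geodesic segment) are actually verified from the diameter bound rather than assumed, and matching the parametrization conventions (the interval $[-\tfrac d2,\tfrac d2]$ versus $[0,l]$, and the sign conventions in $\textup{tn}_K$ and in the endpoint-derivative comparison \eqref{J-derivative-comp}). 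No deep idea is needed beyond the Sturm-type comparison already set up in the section.
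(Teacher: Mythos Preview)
Your proposal is correct and follows essentially the same route as the paper: the forward implications are exactly what Proposition~\ref{J-derivative-comparison} (via Lemma~\ref{two-point-jacobi-comparison} and Remark~\ref{derivative-comparsion-remark}) gives, and the paper simply cites those estimates without further detail. Your treatment of the reverse implications via the infinitesimal limit \eqref{collapse-of-C} is a clean way to fill in a direction the paper leaves implicit (it only remarks that the result also follows from the Hessian/index comparison theorem).
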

 Note that $2\mathcal C$ is the second order derivative of the distance or index form (see below). This comparison result also follows directly from Hessian (Index) comparison theorem \cite[Page 4]{schoen1994lectures} in the two-dimensional case. See \cite{andrewsclutterbuckmoc} for the higher dimension version.
 
 %
 Recall the index form of a geodesic $\gamma: [0,a] \rightarrow M$ is given by (see, e.g., \cite[Page 17]{Cheeger-Ebin2008})
 \[
 I (V,W) = \int_0^a \langle \nabla_{\gamma'} V,  \nabla_{\gamma'} W \rangle - \langle R(V, \gamma')\gamma', W \rangle,
 \]
 where $V,W$ are vector fields along $\gamma$. It is a symmetric bilinear form and is independent of orientation of $\gamma$. 
 \begin{lem}\label{C-is-derivative-of-distance-lemma}
     Suppose $x,y$ are not cut points of each other. Define $e_2=\gamma_{x,y}'(-\tfrac d2)\in T_xM.$ Choose $e_1\perp e_2$ and parallel translate $e_1$ along $\gamma_{x,y}.$ Set $E_1=e_1(-\tfrac d2)\oplus e_1(\tfrac d2)\in T_xM\bigoplus T_yM$, and $J$ the Jacobi field with $J(\pm \tfrac d2) =e_1(\pm \tfrac d2)$. Then  \begin{align}\label{C-derivative of distance}
         \mathcal C(x,y)=\tfrac 12\nabla^2_{E_1,E_1}d(x,y) = \tfrac 12 I(J,J)
     \end{align}
 \end{lem}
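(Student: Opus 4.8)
The plan is to establish both equalities in \eqref{C-derivative of distance} by relating the second variation of the distance function to the index form, and then recognizing the index form of our particular Jacobi field as $2\mathcal{C}(x,y)$ via an integration by parts.

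\medskip

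\noindent\textbf{Step 1: The second variation of distance equals the index form.}
First I would recall the standard second variation formula for arc length. Consider a two-parameter variation of $\gamma_{x,y}$ obtained by moving the endpoint $x$ in the direction $e_1(-\tfrac d2)$ and the endpoint $y$ in the direction $e_1(\tfrac d2)$. Since $x$ and $y$ are not cut points, the distance function $d(\cdot,\cdot)$ is smooth near $(x,y)$, and the classical formula (see, e.g., \cite[Page 17]{Cheeger-Ebin2008} or \cite{doCarmo}) gives that for a variation field $V$ along $\gamma$ which is perpendicular to $\gamma'$, the Hessian of $d$ in the direction $E_1 = e_1(-\tfrac d2)\oplus e_1(\tfrac d2)$ is computed by minimizing the index form $I(W,W)$ over all vector fields $W$ along $\gamma$ with $W(\pm \tfrac d2) = e_1(\pm\tfrac d2)$. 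The minimizer of the index form with prescribed boundary values (among fields perpendicular to $\gamma'$) is precisely the Jacobi field $J$ with those boundary values — this uses that $x,y$ are not conjugate, which is guaranteed by the diameter hypothesis. Hence $\nabla^2_{E_1,E_1} d(x,y) = I(J,J)$, which is the second equality.

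\medskip

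\noindent\textbf{Step 2: Identify $I(J,J)$ with $2\mathcal{C}(x,y)$.}
Next I would integrate by parts in the index form. Since $J$ is perpendicular to $\gamma'$ in dimension two, we may write $J(s) = J_{x,y}(s)\, e_1(s)$ where $J_{x,y}$ is the scalar function solving \eqref{Jacobi-ODE}, i.e. $J_{x,y}'' + \kappa(\gamma_{x,y}(s)) J_{x,y} = 0$ with $J_{x,y}(\pm\tfrac d2)=1$. Then $\nabla_{\gamma'} J = J_{x,y}' e_1$ and $\langle R(J,\gamma')\gamma', J\rangle = \kappa(\gamma_{x,y}(s)) J_{x,y}^2$, so
\begin{align*}
 I(J,J) = \int_{-d/2}^{d/2} \left( (J_{x,y}')^2 - \kappa(\gamma_{x,y}) J_{x,y}^2 \right)\, ds.
\end{align*}
Integrating the first term by parts and using the Jacobi equation $J_{x,y}'' = -\kappa J_{x,y}$ to cancel the interior terms, we get
\begin{align*}
 I(J,J) = \Big[ J_{x,y}' J_{x,y} \Big]_{-d/2}^{d/2} = J_{x,y}'(\tfrac d2)\cdot 1 - J_{x,y}'(-\tfrac d2)\cdot 1 = J_{x,y}'(\tfrac d2) - J_{x,y}'(-\tfrac d2),
\end{align*}
using $J_{x,y}(\pm\tfrac d2) = 1$. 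By the definition \eqref{def-of-C}, this equals $2\mathcal{C}(x,y)$, which establishes $\mathcal{C}(x,y) = \tfrac12 I(J,J)$ and completes the chain of equalities.

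\medskip

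\noindent\textbf{Main obstacle.}
The routine computations (the integration by parts, the reduction to the scalar equation) are straightforward. The one point requiring genuine care is the first equality: justifying that the Hessian of the distance function in the direction $E_1$ is computed by the index form \emph{of the Jacobi field} rather than of an arbitrary extension. This requires invoking that $J$ minimizes the index form among all admissible variation fields with the given boundary data — valid precisely because there are no conjugate points along $\gamma_{x,y}$ (guaranteed by $D \le \tfrac{\pi}{2\sqrt{\overline\kappa}}$ and hence no cut points, as noted after Theorem \ref{main-thm}) — together with the fact that the normal component is what contributes, since the tangential variation does not change arc length to second order for a geodesic. I would cite the standard Hessian comparison / second variation setup (e.g. \cite[Page 4]{schoen1994lectures}) for this identification rather than reproving it.
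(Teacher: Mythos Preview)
Your proposal is correct and follows essentially the same route as the paper. The paper makes Step~1 explicit by constructing the variation $\eta(r,s)$ through minimal geodesics joining $\sigma_1(r)=\exp_x(re_1)$ and $\sigma_2(r)=\exp_y(re_1)$, so that $\partial_r\eta = J$ is automatically the Jacobi field and the boundary term $\langle e_2,\nabla_r\tfrac{\partial\eta}{\partial r}\rangle$ vanishes because the $\sigma_i$ are geodesics; you instead cite the Hessian--index-form identification and the minimizing property of Jacobi fields, which is the same content packaged as a reference. Step~2 (the integration by parts reducing $I(J,J)$ to $J'_{x,y}(\tfrac d2)-J'_{x,y}(-\tfrac d2)$) is identical in both.
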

 \begin{proof}
 To compute the derivative of distance,   let $\sigma_1(r)$ be the geodesic with $\sigma_1(0) =x_0, \tfrac{\partial}{\partial r} \sigma_1(0)=e_1(\tfrac{-d_0}{2})$,  $\sigma_2(r)$ be the geodesic with $\sigma_2(0) =y_0, \tfrac{\partial}{\partial r} \sigma_2(0)=e_1(\tfrac{d_0}{2})$, and $\eta(r, s)$, $s \in [-\tfrac{d_0}{2},\tfrac{d_0}{2}]$, be the minimal geodesic connecting $\sigma_1(r)$ and $\sigma_2(r)$.  Since $x,y$ are not cut points of each other, the variation $\eta(r,s)$ is smooth and $\tfrac{\partial \eta}{\partial r} =J$.
     Then by the second variation formula of distance
\small
\begin{equation}\label{second derivative of length of geodesic2}
\begin{aligned}
\frac{\dd^2}{\dd r^2}L[\eta (r,s)]\Big|_{r=0}
&=\int^{\frac{d}{2}}_{-\frac{d}{2}}\Big[ \langle 
\nabla_s \tfrac{\partial \eta}{\partial r}, \nabla_s \tfrac{\partial \eta}{\partial r} \rangle  - \langle R(e_2, \tfrac{\partial \eta}{\partial r})\tfrac{\partial \eta}{\partial r},e_2\rangle \Big]ds + \langle e_2, \nabla_{r}\tfrac{\dd\eta}{\dd r} \rangle \biggr|_{-d/2}^{d/2} \\
&= I(J,J) \\
& = \int^{\frac{d}{2}}_{-\frac{d}{2}}\Big[ \langle 
J',  J \rangle'  - \langle 
J'',  J \rangle  - \langle R(J,e_2) e_2, J\rangle \Big]ds \\
& = \langle  J',  J \rangle \big|_{-\tfrac{d_0}{2}}^{\tfrac{d_0}{2}} \\ 
&=2\mathcal C(x,y).
\end{aligned}
\end{equation}
 \end{proof}
\subsection{An identity for Jacobi fields}
There is one remarkable identity for the derivatives of Jacobi fields at their endpoints. This will play an essential role throughout the proof, so we mention it now.
\begin{lem}\label{HelpforLaplacianTerm}
 Suppose we have two Jacobi fields $J_1$ and $J_2$ which satisfy the following:
 \begin{eqnarray}\label{J1}
    J_1''+kJ_1=0\quad & \textup{with }\quad & J_1(0)=1,\, J_1(l)=0 \\
    \label{J2}
    J_2''+kJ_2=0\quad & \textup{with }\quad & J_2(0)=0, \, J_2(l)=1.
 \end{eqnarray}
 Then we have that 
 \begin{align*}
     J_2'(0)+ J_1'(l)=0.  \end{align*}
 \end{lem}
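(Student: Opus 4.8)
The plan is to use the Wronskian-type identity for solutions of the same linear second-order ODE. Since $J_1$ and $J_2$ both satisfy $J'' + kJ = 0$ on $[0,l]$, the quantity $W(s) = J_1'(s)J_2(s) - J_1(s)J_2'(s)$ has derivative $W'(s) = J_1''J_2 - J_1 J_2'' = (-kJ_1)J_2 - J_1(-kJ_2) = 0$, so $W$ is constant on $[0,l]$. First I would evaluate this constant at the two endpoints using the prescribed boundary data.

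At $s = 0$ we have $J_1(0) = 1$ and $J_2(0) = 0$, so $W(0) = J_1'(0)\cdot 0 - 1\cdot J_2'(0) = -J_2'(0)$. At $s = l$ we have $J_1(l) = 0$ and $J_2(l) = 1$, so $W(l) = J_1'(l)\cdot 1 - 0 \cdot J_2'(l) = J_1'(l)$. Setting $W(0) = W(l)$ gives $-J_2'(0) = J_1'(l)$, which is exactly the claimed identity $J_2'(0) + J_1'(l) = 0$.

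One technical point worth noting is that the statement implicitly requires the boundary value problem to be well-posed, i.e.\ that $0$ and $l$ are not conjugate along the geodesic, so that $J_1$ and $J_2$ exist and are unique; this is consistent with the standing hypotheses in the paper (the diameter bound ensures no conjugate or cut points occur). There is essentially no obstacle here: the whole argument is the observation that the Wronskian is a first integral of the equation, and the identity falls out by comparing its value at the two ends. I would present it in three lines — define $W$, note $W' \equiv 0$, evaluate at $0$ and $l$ — rather than invoking Lemma~\ref{solutionofsecondorderode}, though one could alternatively read it off from that formula by plugging in $M \equiv 0$ and matching coefficients.
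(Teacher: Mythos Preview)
Your argument is correct and is essentially identical to the paper's: the paper multiplies \eqref{J1} by $J_2$ and \eqref{J2} by $J_1$, subtracts, and integrates, which is precisely the statement that the Wronskian $J_1'J_2 - J_2'J_1$ has zero derivative and hence equal values at the endpoints. The only cosmetic difference is that the paper writes the vanishing of $W'$ as an integral identity rather than pointwise.
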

 \begin{proof}
 Multiplying \eqref{J1} with $J_2$ and \eqref{J2} with $J_1,$ subtracting them from each other and integrating gives that 
 \begin{align*}
     0&=\int_{0}^lJ_1''J_2-J_2''J_1\,ds\\
     &=\Bigl[J_1'(s)J_2(s)-J_2'(s)J_1(s)\Bigr]_{s=0}^{s=l}\\
     &=J_1'(l)+J_2'(0).
 \end{align*}
 \end{proof}
 
 \begin{rem}\label{n-dim-jacobi-symmetry}
     This argument also applies to the matrices $J^{1,0}$ and $J^{0,1}.$ In other words, we have that
     \begin{align*}
        (J^{0,1})'(-d/2)=-(J^{1,0})'(d/2).
     \end{align*}
 \end{rem}

 \section{Modulus of Concavity of the first eigenfunction }\label{MOCsection}

  We first apply the two-point maximum principle in a very general setting -- on a  $n$-dimensional Riemannian manifold for a 
general modulus of concavity function. Then we restrict our attention to two dimensions and choose a specific function to obtain our result. The computations are very long so we postpone some proofs to later sections. 

 To obtain a sufficiently strong concavity estimate of a function $w$ on a strictly convex domain $\Omega \subset M^n$, one considers the two point function
 \begin{equation}\label{Z_with_general_MoC}
     Z(x,y)=\langle \nabla w(y), \gamma_{x,y}'(\tfrac{d}{2})\rangle -\langle \nabla w(x),\gamma_{x,y}'(\tfrac{-d}{2})\rangle -F(x,y). 
  \end{equation}
 and applies a maximum principle to $Z.$ For more background on two-point maximum principles and their applications in geometry, we refer to the surveys by Andrews \cite{andrews2014moduli} and Brendle \cite{brendle2014two}.
 The goal of this section is to construct modulus of concavity for $w$. In other words, we want to find functions $F$ so that $Z\leq 0.$

As discussed in Subsection \ref{sub1.1}, we do not restrict ourselves to radial moduli of concavity and instead consider more general functions $F(x,y)$, which will take to be $C^2$ function and vanish on the diagonal ($x=y$). With these assumptions in place, we first show that $Z$ cannot achieve a positive maximum at the boundary.
 
 \subsection{Boundary Asymptotics} 
Following \cite{andrews2011proof,10.4310/jdg/1559786428},
we show that if $Z$ achieves a positive maximum, it must occur away from the boundary of $\widehat{\Omega} = \Omega\times\Omega - \{(x,x)\ | \ x\in\Omega\}.$ We start by applying \cite[Lemma 3.4]{10.4310/jdg/1559786428}, which states the following.

\begin{lem} \label{Firstboundarylemma}
Let $\Omega$ be a uniformly convex bounded domain in a Riemannian manifold $M^n$, and $u: \bar{\Omega}  \rightarrow \mathbb{R}$ a $C^2$ function which is positive on the interior of $\Omega$ and satisfies $u=0$ on $\partial \Omega$. Furthermore, supposed that $\nabla u \neq 0$ on $\partial \Omega$. There exists $r_1>0$ such that $\left.\nabla^2 \log u\right|_{x}<0$ whenever $d(x, \partial \Omega)<r_1$ and $N \in \mathbb{R}$ such that $\left.\nabla^2 \log u\right|_{x}(v, v) \leq N\|v\|^2$ for all $x \in \Omega$.
\end{lem}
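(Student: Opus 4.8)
The statement is Lemma 3.4 from \cite{10.4310/jdg/1559786428}, so the plan is to recall its proof rather than invent a new one. The claim has two parts: a \emph{local} negative-definiteness of $\nabla^2 \log u$ near the boundary, and a \emph{global} upper bound $N$ on $\nabla^2 \log u(v,v)/\|v\|^2$. The key difficulty is the local part: since $u$ vanishes to first order on $\partial\Omega$, we have $\log u \to -\infty$ near $\partial\Omega$, so we expect $\nabla^2 \log u = \tfrac{\nabla^2 u}{u} - \tfrac{\nabla u \otimes \nabla u}{u^2}$ to be dominated by the negative rank-one term $-\tfrac{\nabla u \otimes \nabla u}{u^2}$, which blows up to $-\infty$ in the normal direction. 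The subtlety is controlling the tangential directions, where $\nabla u$ is small.

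First I would work in Fermi coordinates near a boundary point: let $\rho(x) = d(x,\partial\Omega)$, which is smooth on a collar $\{\rho < r_0\}$ by compactness and smoothness of $\partial\Omega$. Since $u=0$ on $\partial\Omega$ and $\nabla u \neq 0$ there (and $u>0$ inside), $\partial_\nu u < 0$ in the inward sense is false --- rather $u \approx c(x')\rho$ with $c = |\nabla u| > 0$ on $\partial\Omega$, uniformly bounded below by some $c_0 > 0$ by compactness. Then $\nabla u = c\,\nabla\rho + O(\rho)$, so $|\nabla u| \geq c_0/2$ on a possibly smaller collar, while $\nabla^2 u$ is uniformly bounded there since $u \in C^2(\bar\Omega)$. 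Writing $\nabla^2\log u = \tfrac{1}{u}\nabla^2 u - \tfrac{1}{u^2}\nabla u\otimes\nabla u$, for a unit vector $v$ decomposed into its component along $\nabla u/|\nabla u|$ and orthogonal complement, the second term contributes $-|\nabla u|^2\langle v, \tfrac{\nabla u}{|\nabla u|}\rangle^2 / u^2$. I would then use uniform convexity of $\Omega$: convexity forces $\nabla^2 u \le 0$ near the boundary in the directions \emph{tangent} to the level sets of $u$ — more precisely, the standard fact (used in \cite{andrews2011proof}) that the first eigenfunction of a convex domain has $\nabla^2 u$ with controlled sign near $\partial\Omega$, or alternatively that one can absorb the positive part of $\tfrac{1}{u}\nabla^2 u$ into the $-\tfrac{1}{u^2}(\nabla u)^{\otimes 2}$ term once $\rho$ (hence $u$) is small enough. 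Concretely: $\langle \nabla^2\log u\, (v), v\rangle \leq \tfrac{C}{u} - \tfrac{c_0^2}{4u^2}\langle v, \tfrac{\nabla u}{|\nabla u|}\rangle^2$; the pure-tangential part needs the convexity input to see that $\langle\nabla^2 u\,(v),v\rangle$ is itself nonpositive (or at worst $O(u)$) for $v$ tangent to $\partial u$, so that the whole expression is $< 0$ once $u < $ some threshold, i.e. once $\rho < r_1$.

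For the global bound, I would argue by compactness: on the interior region $\{\rho \geq r_1\}$, $u$ is bounded below by a positive constant and $u \in C^2(\bar\Omega)$, so $\nabla^2\log u$ is a continuous section of a bundle over a compact set, hence bounded above by some constant; on the collar $\{\rho < r_1\}$ we have just shown $\nabla^2\log u < 0$, so it is bounded above by $0$ there. Taking $N$ to be the maximum of these two bounds (and $N \geq 0$) gives the claim.

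\textbf{Main obstacle.} The genuinely delicate step is the tangential estimate in the collar: showing that the positive part of $\tfrac{1}{u}\nabla^2 u$ does not overwhelm the negative rank-one term in directions nearly tangent to $\partial\Omega$, where $\langle v, \nabla u\rangle \approx 0$. This is exactly where uniform convexity of $\Omega$ enters — it controls the second fundamental form of the level sets $\{u = \text{const}\}$ near the boundary and pins down the sign of $\nabla^2 u$ restricted to those level sets — and it is the reason the hypothesis "$\Omega$ uniformly convex" appears. Since this is quoted verbatim from \cite[Lemma 3.4]{10.4310/jdg/1559786428}, for the present paper it suffices to cite that reference, but the sketch above indicates how the proof goes.
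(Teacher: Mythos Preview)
The paper gives no proof of this lemma; it simply quotes it as \cite[Lemma~3.4]{10.4310/jdg/1559786428} and cites that reference. You correctly recognize this, and your sketch of the underlying argument is sound: the decisive point is indeed that uniform convexity of $\partial\Omega$ gives $\nabla^2 u(v,v)=-|\nabla u|\,II(v,v)<0$ on $\partial\Omega$ for tangential $v$, which after dividing by $u$ handles the purely tangential directions, while the $-|\nabla u|^2/u^2$ term dominates in the normal direction; mixed directions are then controlled by the absorption argument you outline. One minor remark: the lemma is stated for an arbitrary $C^2$ function $u$, so your aside about ``the first eigenfunction of a convex domain'' is unnecessary---the sign of $\nabla^2 u$ in tangential directions comes from the boundary geometry alone, not from any PDE.
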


Following \cite[Lemma 3.5]{10.4310/jdg/1559786428}, we also have the following.
\begin{lem} \label{Boundary asymptotics bounded MoC}
    With the assumptions of Lemma \ref{Firstboundarylemma} in place and further assuming that $F$ is continuous on $\overline \Omega\times \overline \Omega,$ for any $\beta >0,$ there exists an open set $U_\beta \subset M\times M$ containing $\partial \widehat \Omega$ such that $Z(x,y)<\beta $ for all $(x,y)\in U_\beta\cap\widehat \Omega.$ 
\end{lem}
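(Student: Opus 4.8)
The plan is to follow the strategy of \cite[Lemma 3.5]{10.4310/jdg/1559786428} and reduce the statement about $Z$ near $\partial\widehat\Omega$ to the behavior of $\log u$ near $\partial\Omega$. Observe that near the boundary diagonal there are really two distinct regimes for a pair $(x,y)$ close to $\partial\widehat\Omega$: either (a) the pair is close to a boundary point of the form $(p,q)$ with $p,q \in \partial\Omega$ and $p \neq q$ (so $d(x,y)$ is bounded below), or (b) the pair is close to a boundary point of the diagonal, $(p,p)$ with $p \in \partial\Omega$ (so $d(x,y) \to 0$). I would fix $\beta > 0$ and treat the two regimes separately, then take $U_\beta$ to be the union of the two open sets produced.

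For regime (a): on the compact set of pairs $(x,y) \in \overline\Omega \times \overline\Omega$ with $d(x,y) \ge \delta$ for a fixed small $\delta$, the function $F$ is continuous and $d(x,y)$ is bounded away from $0$, so the term $-F(x,y)$ is bounded. The first-order terms $\langle \nabla w(y),\gamma'_{x,y}(\tfrac d2)\rangle - \langle \nabla w(x),\gamma'_{x,y}(-\tfrac d2)\rangle$ are where the boundary blow-up of $\nabla w = \nabla \log u$ enters: as $x \to \partial\Omega$, $\nabla w(x) \to -\infty$ in the inward normal direction because $u \to 0$ with $\nabla u \ne 0$. The key geometric point, exactly as in \cite{andrews2011proof,10.4310/jdg/1559786428}, is that by uniform convexity of $\Omega$ the inward-pointing geodesic from a near-boundary point $x$ toward an interior-ish point $y$ makes the pairing $-\langle \nabla w(x), \gamma'_{x,y}(-\tfrac d2)\rangle$ tend to $-\infty$ (not $+\infty$); symmetrically the $y$-term tends to $-\infty$ as $y \to \partial\Omega$. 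Hence $Z \to -\infty$ along sequences approaching this part of $\partial\widehat\Omega$, and a standard compactness/contradiction argument produces an open neighborhood of $\{(p,q) : p,q\in\partial\Omega,\ d(p,q)\ge \delta\}$ on which $Z < \beta$ (in fact $Z < 0$).

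For regime (b): here $d(x,y)\to 0$, so $F(x,y)\to 0$ by hypothesis, and the difference of first-order terms is $\int$ of $\nabla^2 w$ along $\gamma_{x,y}$, more precisely $\langle \nabla w(y),\gamma'_{x,y}(\tfrac d2)\rangle - \langle \nabla w(x),\gamma'_{x,y}(-\tfrac d2)\rangle = \int_{-d/2}^{d/2} \nabla^2 w(\gamma',\gamma')\, ds$. By Lemma \ref{Firstboundarylemma}, for $x$ within $r_1$ of $\partial\Omega$ we have $\nabla^2\log u|_x < 0$, and everywhere $\nabla^2\log u(v,v)\le N\|v\|^2$. When $(x,y)$ is close to $(p,p)$ with $p\in\partial\Omega$, the whole geodesic $\gamma_{x,y}$ lies within $r_1$ of $\partial\Omega$, so the integrand is negative and the integral is $\le 0$; combined with $F(x,y)\to 0$, we get $Z(x,y) \le -F(x,y) \le \beta$ for $(x,y)$ in a suitable neighborhood of $\partial\widehat\Omega \cap \{\text{diagonal boundary}\}$, after shrinking so that $|F| < \beta$ there.

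The main obstacle is making the case (a) argument rigorous: one must verify that the boundary blow-up of $\nabla w$ has the correct sign when paired against $\gamma'_{x,y}$, which requires combining the one-sided estimate $\nabla^2\log u < 0$ near $\partial\Omega$ with the uniform convexity of $\Omega$ to control the direction of the minimizing geodesic relative to the inward normal — this is precisely the content of \cite[Lemma 3.5]{10.4310/jdg/1559786428}, and since $F$ is merely assumed continuous (hence bounded on the relevant compact set) the argument there goes through essentially verbatim; the only new feature compared to a radial modulus is that we use continuity of $F$ on $\overline\Omega\times\overline\Omega$ in place of continuity of a one-variable profile, which changes nothing. I would therefore present this as a short adaptation of \cite[Lemma 3.5]{10.4310/jdg/1559786428}, spelling out the two regimes and citing that lemma for the geometric blow-up estimate.
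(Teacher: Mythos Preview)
Your approach is essentially that of the paper, but your decomposition of $\partial\widehat\Omega$ is incomplete. Recall that $\partial\widehat\Omega$ consists of the \emph{entire} diagonal $\{(p,p): p\in\overline\Omega\}$ together with all pairs $(p,q)$ where \emph{at least one} of $p,q$ lies on $\partial\Omega$. Your regime~(a) (both $p,q\in\partial\Omega$, $p\neq q$) and regime~(b) ($p=q\in\partial\Omega$) together miss interior diagonal points $(p,p)$ with $p\in\Omega$, and off-diagonal points with exactly one coordinate on $\partial\Omega$.

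The paper's split is simpler and covers everything. Its Case~1 treats \emph{all} diagonal points $x_0=y_0$ (interior or boundary) using only the global upper bound $\nabla^2 w\le N$ from Lemma~\ref{Firstboundarylemma}, giving $Z(x,y)\le Nd-F(x,y)<\beta$ once $d$ and $|F(x,y)|$ are small by continuity. Its Case~2 treats $x_0\ne y_0$ with at least one on $\partial\Omega$, via exactly the blow-up argument you describe. Your regime~(b) argument invokes the near-boundary \emph{negativity} of $\nabla^2 w$, which is stronger than needed and does not extend to interior diagonal points; replacing it with the $N$ bound fixes this and removes the need to distinguish diagonal points by whether $p\in\partial\Omega$. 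Your regime~(a) argument already works verbatim when only one of $p,q$ is on $\partial\Omega$, so that omission is only in how you stated the dichotomy, not in the mechanism.
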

 \begin{proof}
     We consider points $(x_0,y_0)\in \partial \widehat\Omega$ and establish the estimate within $B_r(x_0,y_0),$ where $r$ is some small radius (which may depend on $(x_0,y_0)$). The set $U_\beta$ can be taken to be the union of these balls. We distinguish two cases.\\
     \begin{enumerate}
         \item[Case 1.] ($x_0=y_0)$
     For $x,y\in \Omega$ close to $x_0=y_0,$ we have 
     \begin{align*}
        Z(x,y)&=\int_{-\tfrac{d}{2}}^{\tfrac{d}{2}}\Hess w(\gamma_{x,y}'(s),\gamma'_{x,y}(s))\,ds  -F(x,y)\\
        &\leq Nd +F(x_0,y_0)-F(x,y).
     \end{align*}
     We then choose $r$ small such that $Nd(x,y)<\tfrac{\beta}{2}$ and (using the continuity of $F$), $F(x_0,y_0)-F(x,y)<\tfrac{\beta}{2}.$
     \item[Case 2.] ($x_0\neq y_0$) In this case, at least one of $x_0$ and $y_0$ lies on $\partial \Omega.$
Without loss of generality, we take $x_0\in \partial \Omega$. 
  Let $\alpha_0 =\|\nabla u\|(x_0)$, which is strictly positive. By convexity of the domain, $\eta :=\langle -\gamma_{x_0,y_0}'(-\tfrac{d}{2}),\nu(x_0)\rangle >0$ by convexity, where $\nu(x_0)$ is the outward pointing normal. Thus we have the estimate
  \begin{align*}
      -\langle \nabla w(x), \gamma'_{x,y_0}(-\tfrac{d}{2})\rangle =-\frac{1}{u(x)}\langle \nabla u(x), \gamma'_{x,y_0}(-\tfrac{d}{2})\rangle \rightarrow -\infty \quad \textup{as }x\rightarrow x_0,
  \end{align*}
  which implies the claim.
 Note that if $y_0\in \partial \Omega$ as well, we also find that 
 \begin{align*}
     \langle \nabla w(y),\gamma_{x_0,y}'(\tfrac{d}{2})\rangle \rightarrow -\infty \quad \textup{as } y\rightarrow y_0.
 \end{align*}
 Thus we have $Z(x,y)<0$ in a neighborhood of $(x_0,y_0).$
     \end{enumerate}
 \end{proof}

\subsection{Two-point maximum principle on a general manifold}
 
We now assume that $Z$ achieves a
positive maximum $\alpha>0$ in the interior of $\widehat{\Omega}$ at $(x_0,y_0)$. In this case, we have that $x_0 \neq y_0$, and the maximum principle implies that for any $E \in T_{x_0}\Omega \oplus  T_{y_0}\Omega$
\begin{eqnarray}
     0 &=& \nabla_E Z(x_0,y_0),\nonumber \\
     0 &\geq& \nabla ^2_{E,E}Z(x_0,y_0).\nonumber
\end{eqnarray}
   We then construct an orthonormal frame $\{e_i\}$ at $x_0$ such that $e_n = \gamma'_{x_0,y_0}(-\tfrac{d_0}{2})$ and parallel translate it along $\gamma_{x_0,y_0}$ to obtain a local frame along the geodesic.
In these computations, we use the notation $E_i = e_i \oplus e_i \in T_{(x_0,y_0)}\Omega \times \Omega$ for $1 \le i \le n-1$, and $E_n = e_n \oplus (-e_n)$.

Since $\nabla_{e_n} \gamma'_{x_0,y_0}(s) =0$ we immediately have 
\begin{align*}
    \nabla _{E_n,E_n}Z(x_0,y_0)=\langle \nabla _{e_n}\nabla _{e_n}\nabla w(y_0),e_n\rangle -\langle \nabla _{e_n}\nabla _{e_n}\nabla w(x_0),e_n\rangle -\nabla^2_{E_n,E_n}F(x_0,y_0).
\end{align*}

 To compute the derivative $\nabla ^2_{E_i,E_i}Z(x_0,y_0)$
 with $i=1,\dots, n-1$, we introduce variations of $\gamma_{x_0,y_0}(s)$, which are denoted
 \begin{equation} \label{Definition_of_eta}
     \eta_i(r,s): (-\delta, \delta) \times [-\tfrac{d_0}{2}. \tfrac{d_0}{2}] \rightarrow \Omega.
 \end{equation} 

To define this variation, we let $\sigma_1(r)$ be the geodesic with $\sigma_1(0) =x_0, \tfrac{\partial}{\partial r} \sigma_1(0)=e_i(\tfrac{-d_0}{2})$ and  $\sigma_2(r)$ be the geodesic with $\sigma_2(0) =y_0, \tfrac{\partial}{\partial r} \sigma_2(0)=e_i(\tfrac{d_0}{2})$.
We define $\eta_i(r, s)$, $s \in [-\tfrac{d_0}{2},\tfrac{d_0}{2}]$ to be the minimal geodesic connecting $\sigma_1(r)$ and $\sigma_2(r)$.  Since $\Omega$ is strongly convex, the variation $\eta(r,s)$ is smooth.

For fixed $r \neq 0$, the curves $\eta_i(r,\cdot)$ will not be unit speed geodesics in general. Hence we define
   \begin{align*}
    T_i(r,s):=\frac{\eta_i'}{\|\eta_i'\|},
\end{align*}
where we denoted $\partial/\partial s$ by $'$, which is a convention we will use throughout the rest of the paper.

Doing so, we have the following identity.  
\begin{align}  \label{Z_ii1}
   & \nabla^2_{E_i,E_i}Z(x_0,y_0)  =\frac{d^2}{dr^2|_{r=0}}Z(\eta_i(r,\tfrac{-d_0}{2}),\eta_i(r,\tfrac{d_0}{2})) \\
    &=\langle \nabla_{e_i}\nabla _{e_i}\nabla w(y_0),e_n\rangle -\langle \nabla_{e_i}\nabla _{e_i}\nabla w(x_0),e_n\rangle \nonumber\\
   & \quad +2\langle \nabla_{e_i}\nabla w (y_0),\nabla _r T_i(r,\tfrac{d_0}{2})\rangle -2\langle \nabla_{e_i}\nabla w (x_0),\nabla _r T_i(r,\tfrac{-d_0}{2})\rangle  \nonumber  \\
    & \quad+\langle \nabla w(y_0),\nabla _r\nabla _rT_i(0,\tfrac{d_0}{2})\rangle - \langle \nabla w(x_0),\nabla _r\nabla _r T_i(0,\tfrac{-d_0}{2})\rangle-\nabla^2_{E_i,E_i}F(x_0,y_0) \nonumber
    \end{align}

We then denote the variation field 
\begin{equation} \label{Definition of J_1}
  J_i(r,s) =  \tfrac{\partial}{\partial r} \eta_i(r,s),
\end{equation}, which is the Jacobi field along $\eta(s)$ satisfying $J_i(r, -\tfrac{d_0}{2})  =\sigma_1'(r),  \ J_i(r,\tfrac{d_0}{2})  = \sigma_2'(r)$. For simplicity, we will often drop the initial $0$ and denote $J_i(s) = J_i(0,s)$. 

We can simplify this expression using several formulas derived on \cite[Page 363]{10.4310/jdg/1559786428}. In particular, they showed that
\begin{eqnarray*} \label{OnederivativeofT}
    \nabla _r T_i(0,s)&=&-\langle \gamma',J_i'\rangle e_n+J_i', \\
    \nabla _r\nabla _r T_i(0,s)&=&\Bigl(3\langle \gamma',J_i'\rangle^2 -\|J_i'\|^2-\langle \nabla _r\nabla _r \tfrac{\partial \eta_i}{\partial s},e_n\rangle \Bigr)e_n \label{TwoderivativesofT}\\
    & & \quad -2\langle \gamma',J_i'\rangle J_i'+\nabla _r\nabla _r \tfrac{\partial \eta_i}{\partial s}  \nonumber
\end{eqnarray*}
As the tangential component of a Jacobi field is linear and $\langle \gamma',J_i\rangle=0,$ at the end points, we have $\langle \gamma',J_i\rangle=0.$ Therefore $\langle \gamma',J_i'\rangle=0.$
Hence \begin{equation} \label{T-derivative}
   \nabla _r T_i(0,s)=J_i', \ \  \  \nabla _r\nabla _r T_i(0,s)=-\Bigl(\|J_i'\|^2+\langle \nabla _r\nabla _r \tfrac{\partial \eta_i}{\partial s},e_n\rangle \Bigr)e_n+\nabla _r\nabla _r \tfrac{\partial \eta_i}{\partial s}.
\end{equation}
Combining Equations \eqref{T-derivative} and \eqref{Z_ii1}, we have 
\begin{align*}
    & \nabla^2_{E_i,E_i}Z(x_0,y_0) =\\
    &\langle \nabla_{e_i}\nabla _{e_i}\nabla w(y_0),e_n\rangle -\langle \nabla_{e_i}\nabla _{e_i}\nabla w(x_0),e_n\rangle +2\langle \nabla_{e_i}\nabla w (y_0), J_i'(\tfrac{d_0}{2})\rangle \\ 
    &-2\langle \nabla_{e_i}\nabla w (x_0),J_i'(\tfrac{-d_0}{2})\rangle   
    -\|J_i'(\tfrac{d_0}{2})\|^2\langle \nabla w(y_0),e_n\rangle +\|J_i'(\tfrac{-d_0}{2})\|^2 \langle \nabla w(x_0),e_n\rangle\\
    &+\sum_{j=1}^{n-1}\left(\langle \nabla _r\nabla _r \tfrac{\partial \eta_i }{\partial s},e_j\rangle\langle \nabla w(y_0),e_j\rangle-\langle \nabla _r\nabla _r \tfrac{\partial \eta_i }{\partial s},e_j\rangle\langle \nabla w(x_0),e_j\rangle \right) -\nabla^2_{E_i,E_i}F(x_0,y_0).
\end{align*}
We now consider the sum of this expression from $i =1$ to $n$, which yields
\begin{align*}
    0 &\geq \sum_{i=1}^n \nabla^2_{E_i,E_i}Z(x_0,y_0) \\
    &=\langle \Delta \nabla w(y_0),e_n\rangle -\langle \Delta \nabla w(x_0),e_n\rangle)- \sum_{i=1}^n\nabla_{E_i,E_i}^2F(x_0,y_0) \\
    &\quad +\sum_{i=1}^{n-1} \left(2\langle \nabla_{e_i}\nabla w (y_0), J_i'(\tfrac{d_0}{2})\rangle -2\langle \nabla_{e_i}\nabla w (x_0),J_i'(\tfrac{-d_0}{2})\rangle   \right) \\
   & \quad + \sum_{i=1}^{n-1} \left( -\|J_i'(\tfrac{d_0}{2})\|^2\langle \nabla w(y_0),e_n\rangle +\|J_i'(\tfrac{-d_0}{2})\|^2 \langle \nabla w(x_0),e_n\rangle \right) \\
   &\quad+ \sum_{i,j=1}^{n-1} \left(\langle \nabla _r\nabla _r \tfrac{\partial \eta_i }{\partial s},e_j\rangle\langle \nabla w(y_0),e_j\rangle-\langle \nabla _r\nabla _r \tfrac{\partial \eta_i }{\partial s},e_j\rangle\langle \nabla w(x_0),e_j\rangle \right). 
\end{align*}

Our goal now is the simplify this expression as much as possible. The terms in the first line can be simplified using the equation for $\Delta w$ and Bochner's identity: \begin{eqnarray} 
\nonumber -\lambda -\Delta w &= & \|\nabla w\|^2 \label{lap-log} \\
 \Delta\nabla\omega -\Ric(\nabla \omega,\cdot) &= & \nabla\Delta \omega. \label{Bochner}\nonumber
 \end{eqnarray}
Doing so, we find that
\begin{align*}
    0\geq&-e_n(\|\nabla w\|^2)(y_0)+e_n(\|\nabla w\|^2)(x_0)\\
    &\quad - \sum_{i=1}^n \nabla_{E_i,E_i}^2F(x_0,y_0)  +\sum_{i=1}^{n-1} \left(2\langle \nabla_{e_i}\nabla w (y_0), J_i'(\tfrac{d_0}{2})\rangle -2\langle \nabla_{e_i}\nabla w (x_0),J_i'(\tfrac{-d_0}{2})\rangle   \right) \\
   & \quad + \sum_{i=1}^{n-1} \left( -\|J_i'(\tfrac{d_0}{2})\|^2\langle \nabla w(y_0),e_n\rangle +\|J_i'(\tfrac{-d_0}{2})\|^2 \langle \nabla w(x_0),e_n\rangle \right) \\
   & \quad + \Ric (\nabla \omega, e_n)(y_0) - \Ric (\nabla \omega, e_n)(x_0)\\
   &\quad  + \sum_{i,j=1}^{n-1} \left(\langle \nabla _r\nabla _r \tfrac{\partial \eta_i }{\partial s},e_j\rangle\langle \nabla w(y_0),e_j\rangle-\langle \nabla _r\nabla _r \tfrac{\partial \eta_i }{\partial s},e_j\rangle\langle \nabla w(x_0),e_j\rangle \right). 
\end{align*}
The first line can be further simplified:
\begin{eqnarray*}
  -e_n(\nabla w,\nabla w)(y_0)+e_n(\nabla w, \nabla w)(x_0) &= &-2w_n(y_0)w_{nn}(y_0)-2\sum_{j=1}^{n-1} w_j(y_0)w_{jn}(y_0) \\
  & &   +2w_n(x_0)w_{nn}(x_0)  +2\sum_{j=1}^{n-1}w_j(x_0)w_{jn}(x_0),
\end{eqnarray*}
where $w_j = \langle \nabla w, e_j \rangle $ 
 and $\ w_{ij} = \Hess w (e_i, e_j)$. As such, we must compute $w_{jn}$ for $j=1, \dots, n-1$. Since 
\begin{align*}
    0&=\nabla _{e_j\oplus 0}Z(x_0,y_0)\\
    &=\langle \nabla w(y_0), (J_j^{1,0})'(\tfrac{d_0}{2})\rangle -w_{jn}(x_0)-\langle \nabla w(x_0), (J_j^{1,0})'(\tfrac{-d_0}{2})\rangle -\nabla_{e_j\oplus 0}F(x_0,y_0),
\end{align*}
we find that
\begin{align*}
    w_{jn}(x_0)=\langle \nabla w(y_0), (J_j^{1,0})'(\tfrac{d_0}{2})\rangle -\langle \nabla w(x_0), (J_j^{1,0})'(\tfrac{-d_0}{2})\rangle- \nabla_{e_j\oplus 0}F(x_0,y_0),
\end{align*}
where $J_j^{1,0}$ and $J_j^{0,1}$ are the $j$-th components of the matrix-valued Jacobi fields defined by Equation \eqref{matrixjacobi-equ}.
Similarly, using $\nabla_{0\oplus e_j}Z(x_0,y_0) =0,$ we get 
\begin{align*}
    -w_{jn}(y_0)=\langle \nabla w(y_0), (J_j^{0,1})'(\tfrac{d_0}{2})\rangle -\langle \nabla w(x_0), (J_j^{0,1})'(\tfrac{-d_0}{2})\rangle -\nabla _{0\oplus e_j}F(x_0,y_0).
\end{align*}
For the $w_{nn}$ terms, we observe that 
\begin{eqnarray}
\label{w_nn-x}
    0&=&\nabla_{e_n\oplus 0}Z=-w_{nn}(x_0)-\nabla_{e_n\oplus 0}F(x_0,y_0), \\
    0 &= &\nabla _{0\oplus e_n}Z=w_{nn}(y_0)-\nabla _{0\oplus e_n}F(x_0,y_0) \label{w_nn-y}.
\end{eqnarray} 

Using the Jacobi matrix identity from Remark~\ref{n-dim-jacobi-symmetry}, we find 
\begin{align*}
    &2\sum_{j=1}^{n-1} w_j(y_0)\Bigl[\langle \nabla w(y_0), (J_j^{0,1})'(\tfrac{d_0}{2})\rangle -\langle \nabla w(x_0), (J_j^{0,1})'(\tfrac{-d_0}{2})\rangle\Bigr] \\
    &+2\sum_{j=1}^{n-1}w_j(x_0)\Bigl[
     \langle \nabla w(y_0), (J_j^{1,0})'(\tfrac{d_0}{2})\rangle -\langle \nabla w(x_0), (J_j^{1,0})'(\tfrac{-d_0}{2})\rangle 
   \Bigr]\\
   =&2\sum_{i,j=1}^{n-1}w_j(y_0)w_i(y_0)(J^{0,1}_{ji})'(\tfrac{d_0}{2})-w_j(x_0)w_i(x_0)(J^{1,0}_{ji})'(\tfrac{-d_0}{2})\\
   &+2 \sum_{i,j=1}^{n-1}\left(-w_j(y_0)w_i(x_0)(J_{ji}^{0,1})'(\tfrac{-d_0}{2})+w_j(x_0)w_i(y_0)(J^{1,0}_{ji})'(\tfrac{d_0}{2})\right)\\
   =&2\sum_{i,j=1}^{n-1}w_j(y_0)w_i(y_0)(J_{ji})'(\tfrac{d_0}{2})-w_j(x_0)w_i(x_0)(J_{ji})'(\tfrac{-d_0}{2})\\
    &+2 \sum_{i,j=1}^{n-1}(J_{ji}^{0,1})'(\tfrac{-d_0}{2})[w_j(y_0)-w_j(x_0)][w_i(y_0)-w_i(x_0)].
\end{align*}
Noting that \begin{align*}
    &\sum_{i,j=1}^{n-1}(J_{ji}^{0,1})'(\tfrac{-d_0}{2})[w_j(y_0)-w_j(x_0)][w_i(y_0)-w_i(x_0)]\\
    &= \textup{Tr}\left((J^{0,1})'(\tfrac{-d}{2})\circ [\nabla w(y_0)-\nabla w(x_0)]\otimes  [\nabla w(y_0)-\nabla w(x_0)]\right),
\end{align*}
we arrive at the following lemma.

\begin{lem}\label{two-point-differential-inequality}
    
Suppose the function $Z(x,y)$ defined by Equation \ref{Z_with_general_MoC} achieves a positive maximum $\alpha$ on the interior of $\widehat{\Omega}$. Then at that point, the following inequality holds.

\begin{align}  \label{main-two points-n}
     0 \geq & 2\textup{Tr}\Bigl((J^{1,1})'(\tfrac{d_0}{2})\circ \left[\nabla w(y)\otimes \nabla w(y)+\Hess w(y)\right]\Bigr) \\ &-2\textup{Tr} \Bigl ((J^{1,1})'(\tfrac{-d_0}{2})\circ\left[\nabla w(x)\otimes \nabla w(x)+\Hess w(x)\right]\Bigr)  \nonumber\\
     &+2  \textup{Tr}\left((J^{0,1})'(\tfrac{-d}{2})\circ [\nabla w(y)-\nabla w(x)]\otimes  [\nabla w(y)-\nabla w(x)]\right) \nonumber\\
    &+\sum_{i,j=1}^{n-1}\Bigl[\langle \nabla _r\nabla_r\eta'_i(0,\tfrac{d_0}{2}),e_j\rangle -\tfrac{2}{n-1}\nabla _{0\oplus e_j}F(x_0,y_0)+\frac{1}{n-1}\Ric_{jn}(y_0)\Bigr]w_j(y_0) \nonumber\\
    &-\sum_{i,j=1}^{n-1}\Bigl[\langle \nabla _r\nabla_r\eta'_i(0,\tfrac{-d_0}{2}),e_j\rangle +\tfrac{2}{n-1}\nabla _{ e_j\oplus 0}F(x_0,y_0)+\frac{1}{n-1}\Ric_{jn}(x_0)\Bigr]w_j(x_0) \nonumber\\
    &+w_n(y_0)\left(\Ric_{nn}(y_0)-\sum_i\|J_i'(\tfrac{d_0}{2})\|^2-2\nabla_{0\oplus e_n}F\right) \nonumber \\ 
    &-w_n(x_0)\left(\Ric_{nn}(x_0)-\sum_i\|J_i'(-\tfrac{d_0}{2})\|^2+2\nabla_{e_n\oplus 0}F\right)
    -\sum_{i=1}^{n}\nabla^2_{E_i,E_i}F(x_0,y_0). \nonumber
\end{align}
\end{lem}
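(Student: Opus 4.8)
The inequality is a formal consequence of the second-order maximum principle, so the plan is to organize the computation that precedes the statement into a clean derivation. All the analytic hypotheses needed for the argument --- that a positive maximum of $Z$ exists in the interior of $\widehat\Omega$, hence $x_0\neq y_0$, and that $u>0$ in $\Omega$ with $\nabla u\neq 0$ on $\partial\Omega$ --- have already been secured by Lemmas \ref{Firstboundarylemma} and \ref{Boundary asymptotics bounded MoC}, so no further comparison geometry is needed at this stage. The argument holds on an arbitrary Riemannian manifold; the geometric hypotheses of Theorem \ref{main-thm} enter only in the later sections that exploit this inequality.

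First I would record the first-order conditions $\nabla_E Z(x_0,y_0)=0$. Testing against $e_j\oplus 0$ and $0\oplus e_j$ for $1\le j\le n-1$ expresses the mixed Hessian entries $w_{jn}(x_0)$ and $w_{jn}(y_0)$ in terms of $\nabla w$ at the two endpoints, the endpoint derivatives of the matrix Jacobi fields $J^{1,0}_j,J^{0,1}_j$ of \eqref{matrixjacobi-equ}, and first derivatives of $F$; testing against $e_n\oplus 0$ and $0\oplus e_n$ gives $w_{nn}(x_0)$ and $w_{nn}(y_0)$ as in \eqref{w_nn-x}--\eqref{w_nn-y}.

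Next I would compute the second variations $\nabla^2_{E_i,E_i}Z(x_0,y_0)\le 0$. For $E_n=e_n\oplus(-e_n)$ this is immediate since $\gamma'$ is parallel along itself. For $i\le n-1$ I would use the geodesic variation $\eta_i(r,s)$ joining $\sigma_1(r)$ to $\sigma_2(r)$, whose variation field is the Jacobi field $J_i$ with $J_i(\pm\tfrac{d_0}{2})=e_i(\pm\tfrac{d_0}{2})$, differentiate $Z(\eta_i(r,-\tfrac{d_0}{2}),\eta_i(r,\tfrac{d_0}{2}))$ twice in $r$, and insert the formulas for $\nabla_r T_i$ and $\nabla_r\nabla_r T_i$ from \cite{10.4310/jdg/1559786428}, simplified via $\langle\gamma',J_i\rangle=0\Rightarrow\langle\gamma',J_i'\rangle=0$. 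Summing over $i=1,\dots,n$ produces the term $\langle\Delta\nabla w,e_n\rangle$ at both points; I would convert it using Bochner's identity together with $-\lambda-\Delta w=|\nabla w|^2$, which yields the Ricci terms $\Ric_{jn},\Ric_{nn}$ and the terms $e_n(|\nabla w|^2)$. Expanding $e_n(|\nabla w|^2)$ in the parallel frame brings out the products $w_j w_{jn}$; substituting the first-order formulas for $w_{jn}$ and applying the matrix Jacobi symmetry $(J^{0,1})'(-\tfrac{d_0}{2})=-(J^{1,0})'(\tfrac{d_0}{2})$ of Remark \ref{n-dim-jacobi-symmetry} to reorganize the resulting double sums, the blocks $\nabla w(y)\otimes\nabla w(y)+\Hess w(y)$ and $\nabla w(x)\otimes\nabla w(x)+\Hess w(x)$ assemble against $(J^{1,1})'=(J^{1,0})'+(J^{0,1})'$ at the two endpoints, while the leftover cross-term pairs with $(J^{0,1})'(-\tfrac{d_0}{2})$, giving exactly \eqref{main-two points-n}.

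I expect the only real difficulty to be bookkeeping: correctly tracking the normal and tangential components throughout --- in particular the terms $\langle\nabla_r\nabla_r\tfrac{\partial\eta_i}{\partial s},e_j\rangle$ arising in $\nabla_r\nabla_r T_i$, and ensuring that the $J^{1,0}$ and $J^{0,1}$ contributions recombine into the stated $(J^{1,1})'$ and $(J^{0,1})'$ blocks with the correct signs --- rather than anything conceptual.
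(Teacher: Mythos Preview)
Your proposal is correct and follows essentially the same approach as the paper: the lemma is the endpoint of the computation in Section~\ref{MOCsection}, and you have identified each of the steps --- first-order conditions to extract $w_{jn}$ and $w_{nn}$, the second variation via the geodesic variations $\eta_i$, the $T_i$-derivative formulas from \cite{10.4310/jdg/1559786428}, Bochner plus $-\lambda-\Delta w=|\nabla w|^2$, and the Jacobi symmetry of Remark~\ref{n-dim-jacobi-symmetry} to assemble the trace blocks --- in the same order and with the same tools as the paper. Your assessment that the difficulty is bookkeeping rather than conceptual is accurate.
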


This can be seen as a two point version of Theorem 2.1 in \cite{surfacepaper1}, and holds for general Riemannian manifolds.

When $M=\mathbb M^n_K$ is a space form, we have \[ ((J^{1,1})_{ij}'(\pm\tfrac{d_0}{2}))_{i,j=1}^{n-1}=\textup{tn}_K(\mp \tfrac{d_0}{2})I_{n-1}.\] 
Hence $\textup{Tr}\Bigl((J^{1,1})' \circ \left(\nabla w\otimes \nabla w+\Hess w\right) \Bigr)$ simplifies in such a way that Equation \eqref{lap-log} can be used. This term seems to be the major obstacle in using the two-point maximum principle for higher dimensional manifolds. This is the first part of the proof where we need two-dimension to proceed with the proof. Moreover the terms $\langle \nabla _r\nabla_r\eta'_i(0,\tfrac{-d_0}{2}),e_j\rangle$ vanish (see \cite{10.4310/jdg/1559786428}, Appendix). When $K=0$, $\nabla _r\nabla_r\eta'_i(0,\tfrac{-d_0}{2})$ is identically zero, not just the vertical components.

\subsection{Specifying to the two-dimensional case}\label{2-d-subsection}
When $n=2$, the trace terms in Equation \eqref{main-two points-n} have only one term each. In this case, $\Ric_p=\kappa(p)g,$ and $J^{1,1}$ can be written in terms of a function $J=J_{x_0,y_0}$ (we omit the subscript in the following computation), which solves the ODE
\begin{align*}
    J''+\kappa J=0, \quad \textup{in }(-\tfrac{d_0}{2},\tfrac{d_0}{2})\quad J(-\tfrac{d_0}{2})=J(\tfrac{d_0}{2})=1
\end{align*}
Inequality \eqref{main-two points-n} becomes 
\begin{align*}
    0\geq &+2J'(\tfrac{d_0}{2})\left(w_{11}(y_0)+w_1^2(y_0)\right) -2J'(\tfrac{-d_0}{2})\left(w_{11}(x_0)+w_{1}^2(x_0)\right) \\
    &\quad 
   +\left(\kappa(y_0) -|J'(\tfrac{d_0}{2})|^2-2\nabla_{0\oplus e_2}F(x_0,y_0)\right)w_2(y_0) \\
   & \quad -\left(\kappa(x_0)-|J'(\tfrac{-d_0}{2})|^2+2\nabla_{e_2\oplus 0}F(x_0,y_0)\right) w_2(x_0)\\
    &\quad+\left(\langle \nabla _r\nabla _r \tfrac{\partial \eta }{\partial s},e_1\rangle-2\nabla_{0\oplus e_1}F(x_0,y_0)\right) w_1(y_0) \\ & \quad -\left(\langle \nabla _r\nabla _r \tfrac{\partial \eta }{\partial s},e_1\rangle +2\nabla_{e_1\oplus 0}F(x_0,y_0)\right)w_1(x_0)  \\
    &\quad +2(J^{0,1})'(\tfrac{-d_0}{2})\Bigl(w_1(y_0)-w_1(x_0)\Bigr)^2-\nabla_{E_1,E_1}^2F(x_0,y_0)-\nabla^2_{E_2,E_2}F(x_0,y_0).
\end{align*}

Note that the first term on the final line is symmetric in $x$ and $y$ via Lemma \ref{HelpforLaplacianTerm}.

Using \eqref{lap-log} and \eqref{w_nn-x}, \eqref{w_nn-y} we have 
\begin{align*}
   & 2J'(\tfrac{d_0}{2})\left(w_{11}(y_0)+w_1^2(y_0)\right) -2J'(\tfrac{-d_0}{2})\left(w_{11}(x_0)+w_{1}^2(x_0)\right)\\
   &=-\lambda \left(2J'(\tfrac{d_0}{2})-2J'(\tfrac{-d_0}{2})\right)-2J'(\tfrac{d_0}{2})\left(w_{22}(y_0)+w_2^2(y_0)\right) +2J'(\tfrac{-d_0}{2})\left(w_{22}(x_0)+w_{2}^2(x_0)\right)\\
   &=-\lambda \left(2J'(\tfrac{d_0}{2})-2J'(\tfrac{-d_0}{2})\right)-2J'(\tfrac{d_0}{2})w_2^2(y_0) +2J'(\tfrac{-d_0}{2})w_{2}^2(x_0)\\
   &\quad -2J'(\tfrac{d_0}{2})\nabla_{0\oplus e_2}F(x_0,y_0)-2J'(\tfrac{-d_0}{2})\nabla_{e_2\oplus 0}F(x_0,y_0).
\end{align*}
We therefore obtain that 
\begin{align}  \label{inequ-max}
    0\geq &-\nabla_{E_1,E_1}^2F(x_0,y_0)-\nabla^2_{E_2,E_2}F(x_0,y_0) -2J'(\tfrac{d_0}{2})\nabla_{0\oplus e_2}F(x_0,y_0)-2J'(\tfrac{-d_0}{2})\nabla_{e_2\oplus 0}F(x_0,y_0) \\
    & \quad +2(J^{0,1})'(\tfrac{-d_0}{2})\Bigl(w_1(y_0)-w_1(x_0)\Bigr)^2 -\lambda \left(2J'(\tfrac{d_0}{2})-2J'(\tfrac{-d_0}{2})\right) \nonumber \\
    &\quad 
   +\left(\kappa(y_0) -|J'(\tfrac{d_0}{2})|^2-2\nabla_{0\oplus e_2}F(x_0,y_0)\right)w_2(y_0) -2J'(\tfrac{d_0}{2})w_2^2(y_0) \nonumber \\
    & \quad -\left(\kappa(x_0)-|J'(\tfrac{-d_0}{2})|^2+2\nabla_{e_2\oplus 0}F(x_0,y_0)\right) w_2(x_0)  +2J'(\tfrac{-d_0}{2})w_{2}^2(x_0) \nonumber \\
    &+\left(\langle \nabla _r\nabla _r \tfrac{\partial \eta }{\partial s},e_1\rangle-2\nabla_{0\oplus e_1}F(x_0,y_0)\right) w_1(y_0)  -\left(\langle \nabla _r\nabla _r \tfrac{\partial \eta }{\partial s},e_1\rangle +2\nabla_{e_1\oplus 0}F(x_0,y_0)\right)w_1(x_0) \nonumber.
\end{align}
\subsection{Specifying to a special function \textit{F}}
As we mentioned previously, the term  $\langle \nabla _r\nabla _r \tfrac{\partial \eta }{\partial s},e_1\rangle$ vanishes in constant curvature, so the function $F$ can be chosen to a radial function (i.e. only depending on the distance between the two points). 
When the curvature is not constant, we will need to introduce a component of $F$ to deal with the term $\langle \nabla _r\nabla _r \tfrac{\partial \eta }{\partial s},e_1\rangle$. Quite surprising, it turns out that there is a particular function which allows this to be done. 

Let $F$ be given by 
    \begin{align*}
        F(x,y)= 2\phi(\tfrac{d(x,y)}{2})+\mathcal C(x,y)
    \end{align*}
    for some $C^2$ function $\phi,$ where $\mathcal C$ is defined in \eqref{def-of-C}.
    
 The key property which makes this function useful is the following proposition.
\begin{prop}\label{CancellationProposition} For $x,y\in \Omega,$ one has that 
\begin{align}\label{equality1inkeyprop}
        \langle \nabla _r\nabla _r \tfrac{\partial \eta }{\partial s}(0,\tfrac{d_0}{2}),e_1\rangle-2\nabla_{0\oplus e_1}\mathcal C(x_0,y_0)=\langle \nabla _r\nabla _r \tfrac{\partial \eta }{\partial s}(0,\tfrac{-d_0}{2}),e_1\rangle +2\nabla_{e_1\oplus 0}\mathcal C(x_0,y_0)=0.
    \end{align}
\end{prop}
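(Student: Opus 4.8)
The plan is to compute both sides directly by expressing the relevant Jacobi fields and their second $r$-derivatives in terms of solutions to inhomogeneous Jacobi equations, and then to recognize that $2\nabla_{0\oplus e_1}\mathcal C$ and $2\nabla_{e_1\oplus 0}\mathcal C$ are precisely the vertical ($e_1$) components of $\nabla_r\nabla_r\tfrac{\partial\eta}{\partial s}$ at the two endpoints. By the symmetry of the setup (reversing orientation of $\gamma$ swaps the roles of $x_0$ and $y_0$), it suffices to prove one of the two equalities, say the one at $s=\tfrac{d_0}{2}$; the other follows by the same argument with $x$ and $y$ interchanged.

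First I would set up the variation. Recall $\eta_1(r,s)$ is the family of minimizing geodesics connecting $\sigma_1(r)$ and $\sigma_2(r)$, with $\tfrac{\partial\eta_1}{\partial r} = J_1$ the Jacobi field satisfying $J_1(\pm\tfrac{d_0}{2}) = e_1(\pm\tfrac{d_0}{2})$ — this is exactly the Jacobi field $J$ appearing in the definition of $\mathcal C(x,y)$ via Lemma~\ref{C-is-derivative-of-distance-lemma}. The quantity $\nabla_r\nabla_r\tfrac{\partial\eta_1}{\partial s}(0,s)$ measures the failure of $\eta_1(r,\cdot)$ to remain a \emph{geodesic to second order}, but since each $\eta_1(r,\cdot)$ \emph{is} a geodesic we have $\nabla_s\tfrac{\partial\eta_1}{\partial s} \equiv 0$; differentiating this twice in $r$ and commuting covariant derivatives (picking up curvature terms $R(J_1,\gamma')\gamma'$ and its derivative) yields a second-order ODE in $s$ for the vector field $V(s) := \nabla_r\nabla_r\tfrac{\partial\eta_1}{\partial s}(0,s)$ along $\gamma$, of the form $V'' + R(t)V = (\text{inhomogeneous terms involving } J_1, J_1', \nabla\kappa)$. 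Its boundary values at $s=\pm\tfrac{d_0}{2}$ are $\nabla_r\nabla_r\sigma_i'(0) = \nabla_{\sigma_i'}\sigma_i' = 0$ since $\sigma_1,\sigma_2$ are themselves geodesics. In dimension two I would then split $V$ into its component along $e_2 = \gamma'$ and along $e_1$; the $e_1$-component $\langle V,e_1\rangle =: v(s)$ satisfies a scalar inhomogeneous Jacobi equation $v'' + \kappa v = m(s)$ with $v(\pm\tfrac{d_0}{2}) = 0$, whose solution is given by Lemma~\ref{solutionofsecondorderode}.

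Second I would compute $\nabla_{0\oplus e_1}\mathcal C(x_0,y_0)$ by differentiating the formula $\mathcal C = \tfrac12 I(J_{x,y},J_{x,y})$ (equivalently $\tfrac12\nabla^2_{E_1,E_1}d$) as $y$ varies along the geodesic $\sigma_2$ while $x$ stays fixed — that is, the first variation of the index form of the connecting Jacobi field under this endpoint motion. Using the first variation of the index form and the Jacobi equation (the boundary terms from integration by parts will survive, the bulk terms cancel against the curvature variation), this reduces to an expression involving $J_1$, $J_1'$, and the geodesic variation data. The crux is then to \emph{match} this expression term-by-term with $\langle V, e_1\rangle\big|_{s=\tfrac{d_0}{2}}$ computed from the ODE above; both should reduce, via the same inhomogeneous-Jacobi solution formula and integration against the curvature gradient $\langle\nabla\kappa,e_1\rangle$ along $\gamma$, to the identical integral.

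I expect the main obstacle to be the bookkeeping in the two covariant-derivative commutations that produce the inhomogeneous term $m(s)$: one must carefully track the curvature derivative terms $(\nabla_{J_1}R)(\gamma',\gamma')$ and cross terms like $R(J_1',\gamma')\gamma'$, and verify that exactly the same combination appears (with opposite sign) in the first variation of the index form, so that the difference vanishes identically rather than merely at a point. A clean way to organize this is to observe that both $v(\tfrac{d_0}{2})$ and $2\nabla_{0\oplus e_1}\mathcal C$ can be written as $\langle J_1', E\rangle$-type boundary pairings of the \emph{same} auxiliary Jacobi field, so that the identity becomes a consequence of the symmetry relation in Lemma~\ref{HelpforLaplacianTerm} / Remark~\ref{n-dim-jacobi-symmetry} rather than a brute-force integral comparison; I would try that route first and fall back on the explicit integral computation using Lemma~\ref{solutionofsecondorderode} if the abstract matching proves elusive.
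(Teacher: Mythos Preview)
There is a genuine error in your boundary-value identification. You write that
\[
V\bigl(\pm\tfrac{d_0}{2}\bigr)=\nabla_r\nabla_r\tfrac{\partial\eta}{\partial s}\bigl(0,\pm\tfrac{d_0}{2}\bigr)=\nabla_r\nabla_r\sigma_i'(0)=\nabla_{\sigma_i'}\sigma_i'=0.
\]
But at the endpoints $\sigma_i'(r)=\tfrac{\partial\eta}{\partial r}(r,\pm\tfrac{d_0}{2})$, not $\tfrac{\partial\eta}{\partial s}(r,\pm\tfrac{d_0}{2})$; the latter is the tangent to the \emph{connecting} geodesic, which depends on $r$ in a nontrivial way. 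Indeed the quantity $\langle V,e_1\rangle(\pm\tfrac{d_0}{2})$ is exactly what the proposition asks you to compute and equals $\mp\int \kappa_1 J^{0,1}J^2$ (respectively $\int\kappa_1 J^{1,0}J^2$), which vanishes only in constant curvature. If your boundary claim were correct, the proposition would force $\nabla_{0\oplus e_1}\mathcal C\equiv 0$, contradicting its own statement. (A related issue: differentiating $\nabla_s\partial_s\eta=0$ twice in $r$ gives a \emph{first}-order ODE in $s$ for $V$, not one of the form $V''+\kappa V=m$.)

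The paper avoids this trap by not setting up an ODE for $V$ at all. Instead it writes $\nabla_r\partial_s\eta=J'$, decomposes $J=J_V V+J_T T$ in a moving orthonormal frame along the variation, and shows that at the endpoints $\langle\nabla_r\nabla_r\eta',e_1\rangle=(\tfrac{\partial}{\partial r}J_V)'$. The geodesic property of $\sigma_i$ then gives $\tfrac{\partial}{\partial r}J_V(0,\pm\tfrac{d_0}{2})=0$ (here is where your ``$\sigma_i$ are geodesics'' argument belongs), and differentiating the scalar Jacobi equation $J_V''+\kappa J_V=0$ in $r$ produces the inhomogeneous equation you anticipated, now with the correct zero boundary data for $\tfrac{\partial}{\partial r}J_V$ rather than for $V$. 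Solving via Lemma~\ref{solutionofsecondorderode} and evaluating the $s$-derivative at the endpoints yields explicit integrals $\mp\int\kappa_1 J^{0,1}J^2$ and $\int\kappa_1 J^{1,0}J^2$. A parallel computation for $\nabla_{0\oplus e_1}\mathcal C$ and $\nabla_{e_1\oplus 0}\mathcal C$ (differentiating $J'(\tfrac d2)-J'(-\tfrac d2)$ directly, not the index form) gives exactly half of these same integrals, and the proposition follows by inspection rather than by any abstract matching.
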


The proof of this proposition can be found by computing the relevant Jacobi fields, and will be presented in Section~\ref{derivativesection}. Note that in view of \eqref{equality1inkeyprop}, the fifth line of \eqref{inequ-max} is zero since by the first variation formula the derivatives of the distance are zero. 

A second difficulty is that, since the curvature is non constant, we cannot combine the $w_2$ terms in the third and fourth lines. We will deal with this issue below (see Lemma \ref{e_2-derivative-of-C} and thereafter).

We now reduce Inequality \eqref{inequ-max} to a simpler form, throughout these computations we write $d=d(x,y)$ and $d_0=d(x_0,y_0).$ Furthermore, we will often drop the argument from $\textup{tn}_{\underline \kappa}(\tfrac{d_0}{2})$ and simply denote this quantity by $\textup{tn}_{\underline \kappa}$ (and similarly for $\textup{tn}_{\overline \kappa}$).

To control the terms in the third and fourth line, we make use of two properties. First, we use the following lemma (whose proof will be done in Section \ref{derivativesection}). \begin{lem}\label{e_2-derivative-of-C} We have 
  \begin{align*}
      2 \nabla _{e_2\oplus 0}\mathcal C&= \kappa(x)+|J'_{x,y}(-\tfrac d2)|^2\\
 2 \nabla _{0\oplus e_2}\mathcal C&=-\kappa(y)-|J'_{x,y}(\tfrac d2)|^2.
  \end{align*}
\end{lem}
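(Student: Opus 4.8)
The plan is to compute the derivative $\nabla_{e_2 \oplus 0}\mathcal{C}(x,y)$ directly from the definition \eqref{def-of-C}, namely $\mathcal{C}(x,y) = \tfrac{1}{2}(J_{x,y}'(\tfrac d2) - J_{x,y}'(-\tfrac d2))$, by differentiating along the geodesic variation generated by moving $x$ in the direction $e_2 = \gamma'_{x,y}(-\tfrac d2)$ while keeping $y$ fixed. Moving $x$ in the direction $e_2$ simply reparametrizes the same geodesic $\gamma_{x,y}$, so $d$ decreases at unit rate and the geodesic segment shrinks from the $x$-endpoint. The key point is that $\mathcal{C}$ is, by Lemma~\ref{C-is-derivative-of-distance-lemma}, one-half of $\nabla^2_{E_2,E_2} d$ evaluated via the index form; but here we want the mixed/repeated derivative of $\mathcal C$ itself, which is a third-order object in $d$. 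I would instead work with the cleaner ODE characterization: differentiate the boundary-value problem \eqref{Jacobi-ODE} in the variation parameter $r$, obtaining an inhomogeneous Jacobi equation for $\dot J = \partial_r J$, and then use Lemma~\ref{solutionofsecondorderode} (with boundary data coming from differentiating $J(\pm d/2) = 1$ and the moving endpoint) to get an explicit formula for $\dot J$, hence for $\partial_r\mathcal C = \tfrac12(\dot J'(\tfrac d2) - \dot J'(-\tfrac d2))$ plus the endpoint-motion corrections.

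Concretely, the steps in order: (1) Set up the variation $\sigma_1(r)$ the geodesic from $x$ with initial velocity $e_2$, so that $\gamma_{\sigma_1(r),y}$ is the restriction of $\gamma_{x,y}$ and $d(r) = d - r + O(r^2)$, with $\dot d(0) = -1$; note the frame $e_1$ is parallel along the fixed geodesic so no rotation term appears. (2) Write $J^r = J_{\sigma_1(r),y}$ and differentiate $(J^r)'' + \kappa(\gamma(s))J^r = 0$ and the boundary conditions $J^r(-d(r)/2) = J^r(d(r)/2) = 1$ in $r$ at $r=0$; the endpoint motion contributes terms involving $J'(\pm d/2)$ and $\dot d = -1$, giving an inhomogeneous Jacobi problem for $\dot J$ whose right-hand side involves $\dd_s\kappa$ only through the moving domain (I expect the bulk inhomogeneity to vanish here since $\kappa$ along the fixed geodesic does not change, leaving only boundary contributions). (3) Apply Lemma~\ref{solutionofsecondorderode} / Lemma~\ref{HelpforLaplacianTerm} and the identity \eqref{collapse-of-C}-style computations to evaluate $\dot J'(\pm d/2)$, and carefully add the contributions from $\tfrac{d}{dr}[J'(\pm d(r)/2)]$ coming from the chain rule on the moving endpoints, which is where the $J''(\pm d/2) = -\kappa J(\pm d/2) = -\kappa(x)$ term enters and produces $\kappa(x)$. (4) Identify the $|J'_{x,y}(-\tfrac d2)|^2$ term as arising from the second-order term in $\nabla_r$ acting on the tangential reparametrization (the square of the boundary derivative), and assemble to get $2\nabla_{e_2\oplus 0}\mathcal C = \kappa(x) + |J'_{x,y}(-\tfrac d2)|^2$; the $0\oplus e_2$ case follows by the same computation at the $y$-endpoint with the sign flip built into the orientation of $\mathcal C$.

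I expect the main obstacle to be bookkeeping the endpoint-motion terms correctly — distinguishing the "intrinsic" variation of the Jacobi field (governed by the inhomogeneous ODE) from the "extrinsic" effect of evaluating $J'$ at a moving argument $\pm d(r)/2$, and making sure the covariant derivative $\nabla_r$ versus ordinary $\partial_r$ discrepancy (the connection terms, which vanish because $e_1$ is parallel) is handled. A secondary subtlety is that the direction $e_2 \oplus 0$ is not of the form $E_i$ used in the maximum-principle setup, so I cannot directly quote the $\nabla^2_{E_i,E_i}$ computations; instead I should relate $\nabla_{e_2\oplus 0}\mathcal C$ to $\partial_r$ of the index-form expression in Lemma~\ref{C-is-derivative-of-distance-lemma}, using that $\mathcal C(\sigma_1(r),y) = \tfrac12 I_{d(r)}(J^r,J^r)$ and that the first variation of the index form of a Jacobi field reduces to boundary terms. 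Once the boundary terms are written out using the Jacobi equation to convert $J''$ into $-\kappa J$, the claimed formula should fall out; I would present this index-form route as the primary argument since it is the shortest and meshes with the already-established Lemma~\ref{C-is-derivative-of-distance-lemma}, relegating the ODE-variation computation to a remark or alternative.
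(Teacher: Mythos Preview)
Your ODE-variation approach is essentially what the paper does: fix the geodesic, vary one endpoint along it, differentiate the boundary-value Jacobi problem, and use Lemma~\ref{HelpforLaplacianTerm} to close up. Two small clarifications will save you bookkeeping headaches. First, the paper uses the \emph{asymmetric} parametrization $s\in[-\tfrac{d_0}{2}+t,\tfrac{d_0}{2}]$ (only the $x$-endpoint moves), not the re-centered interval $[-d(r)/2,d(r)/2]$ in your step~(2); with the asymmetric choice $\kappa(\gamma(s))$ is literally independent of $t$, so the bulk inhomogeneity genuinely vanishes and $J_t$ satisfies the \emph{homogeneous} Jacobi equation with boundary data $J_t(0,-\tfrac{d_0}{2})=-J'(-\tfrac{d_0}{2})$, $J_t(0,\tfrac{d_0}{2})=0$, whence $J_t(0,s)=-J'(-\tfrac{d_0}{2})J^{1,0}(s)$. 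Second, the $|J'(-\tfrac{d_0}{2})|^2$ is not a ``second-order'' artifact: it drops out of $J_t'(\tfrac{d_0}{2})-J_t'(-\tfrac{d_0}{2}) = -J'(-\tfrac{d_0}{2})\bigl[(J^{1,0})'(\tfrac{d_0}{2})-(J^{1,0})'(-\tfrac{d_0}{2})\bigr]$ once you use Lemma~\ref{HelpforLaplacianTerm} and $J=J^{1,0}+J^{0,1}$ to see the bracket equals $-J'(-\tfrac{d_0}{2})$. The $\kappa(x)$ term then comes exactly as you say, from the chain rule on the moving endpoint via $J''(-\tfrac{d_0}{2})=-\kappa(x)$. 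Your proposed index-form alternative would work but is not shorter: the first variation of $I_{d(r)}(J^r,J^r)$ with moving domain and varying field unwinds to the same boundary terms, so the paper's direct ODE route is the cleaner presentation.
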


Second, we assume that $u$ is log-concave which implies \begin{align}\label{log-concavity}
w_2(y_0)\leq w_2(x_0). 
\end{align}
This might seem to be circular, since the entire goal of this argument is to establish a log-concavity estimate for $u$. However, this assumption will later be removed by a continuity method (see proof of Proposition \ref{Propositiongeneralmoc} below).

Using Lemma \ref{e_2-derivative-of-C}, we consider the terms 
\begin{align*}
    &\left(\kappa(y_0) -|J'(\tfrac{d_0}{2})|^2-2\nabla_{0\oplus e_2}F(x_0,y_0)\right)w_2(y_0) \\
    & -\left(\kappa(x_0)-|J'(\tfrac{-d_0}{2})|^2+2\nabla_{e_2\oplus 0}F(x_0,y_0)\right) w_2(x_0)\\
    &=-2\left(w_2(y_0)-w_2(x_0)\right)\phi'(\tfrac {d_0}{2}) \\ &  +2\kappa(y_0) w_2(y_0)  -2\kappa(x_0) w_2(x_0).
\end{align*}
Using the assumption $w_2(y_0)\leq w_2(x_0)$, we break the analysis of these terms into three cases. 
 \begin{enumerate}
     \item[Case 1.]  $(0>w_2(x_0)>w_2(y_0))$ 
In this case, we have that
\begin{align*}
    &2\kappa(y_0) w_2(y_0)  -2\kappa(x_0) w_2(x_0) 
    \\
    &=2\kappa(y_0)\left(w_2(y_0)-w_2(x_0)\right)+\varepsilon(x_0,y_0)w_2(x_0), 
\end{align*}
where 
\begin{align}\label{epsilon-case-1}
\varepsilon(x,y)=2\left(\kappa(y_0)-\kappa(x_0)\right).
\end{align}
Since $J'(\tfrac{d_0}{2}) <0$, the assumption implies the estimate
\begin{align*}
    -2J'(\tfrac{d_0}{2})w_2^2(y_0)\geq -2J'(\tfrac{d_0}{2})\left(w_2(y_0)-w_2(x_0)\right)^2\geq  2\textup{tn}_{\underline \kappa}(\tfrac{d_0}{2})\left(w_2(y_0)-w_2(x_0)\right)^2, 
\end{align*}
where we have used Proposition \ref{J-derivative-comparison} in the last inequality. Furthermore, when completing the square, we find that 
\begin{align*}
    2J'(\tfrac{-d_0}{2})w_2^2(x_0)+\varepsilon(x_0,y_0)w_2(x_0)=2J'(\tfrac{-d_0}{2})\Bigl(w_2(x_0)+\frac{\varepsilon(x_0,y_0)}{4J'(\tfrac{-d_0}{2})}\Bigr)^2-\frac{\varepsilon^2(x_0,y_0)}{8J'(\tfrac{-d_0}{2})}. 
\end{align*}
Combining this equation, using Lemma \eqref{e_2-derivative-of-C}, and Proposition \ref{J-derivative-comparison}, together with the log-concavity assumption \eqref{log-concavity}, \eqref{inequ-max} becomes the following.  
\begin{align}\label{Ineq-case-1}
    0\geq &-\nabla_{E_1,E_1}^2F(x_0,y_0)-\nabla^2_{E_2,E_2}F(x_0,y_0) -2J'(\tfrac{d_0}{2})\nabla_{0\oplus e_2}F(x_0,y_0)-2J'(\tfrac{-d_0}{2})\nabla_{e_2\oplus 0}F(x_0,y_0) \\
    & \quad +2(J^{0,1})'(\tfrac{-d_0}{2})\Bigl(w_1(y_0)-w_1(x_0)\Bigr)^2 -\lambda \left(2J'(\tfrac{d_0}{2})-2J'(\tfrac{-d_0}{2})\right)-2\left(w_2(y_0)-w_2(x_0)\right)\phi'(\tfrac {d_0}{2}) \nonumber \\
&\quad +2\kappa(y_0)\left(w_2(y_0)-w_2(x_0)\right)+2\textup{tn}_{\underline \kappa}(\tfrac{d_0}{2})\left(w_2(y_0)-w_2(x_0)\right)^2-\frac{\varepsilon^2(x_0,y_0)}{8\textup{tn}_{\underline \kappa}(\tfrac {d_0}{2})}.
 \nonumber 
\end{align}
\item[Case 2.] $(w_2(x_0)\geq w_2(y_0)>0)$
 
 In this case, we observe that 
\begin{align*}
    &2\kappa(y_0) w_2(y_0)  -2\kappa(x_0) w_2(x_0)  
    \\
    &=2\kappa(x_0)\left(w_2(y_0)-w_2(x_0)\right)+\varepsilon(x_0,y_0)w_2(y_0) 
    .
\end{align*}
 Since $J'(-\tfrac{d_0}{2}) >0$, the standing assumption implies the estimate
\begin{align*}
    2J'(-\tfrac{d_0}{2})w_2^2(x_0)\geq 2J'(-\tfrac{d_0}{2})\left(w_2(y_0)-w_2(x_0)\right)^2\geq 2  \textup{tn}_{\underline \kappa}(\tfrac{d_0}{2})\left(w_2(y_0)-w_2(x_0)\right)^2, 
\end{align*}
where in the last inequality we have again used Proposition \ref{J-derivative-comparison}. 

When completing the square, we get that 
\begin{align*}
    -2J'(\tfrac{d_0}{2})w_2^2(y_0)+\varepsilon(x_0,y_0)w_2(y_0)=-2J'(\tfrac{d_0}{2})\Bigl(w_2(y_0)-\frac{\varepsilon(x_0,y_0)}{4J'(\tfrac{d_0}{2})}\Bigr)^2+\frac{\varepsilon^2(x_0,y_0)}{8J'(\tfrac {d_0}{2})}. 
\end{align*}
Similarly to Case 1, we obtain the following inequality:
\begin{align}\label{inequ-case-2}
    0\geq &-\nabla_{E_1,E_1}^2F(x_0,y_0)-\nabla^2_{E_2,E_2}F(x_0,y_0) -2J'(\tfrac{d_0}{2})\nabla_{0\oplus e_2}F(x_0,y_0)-2J'(\tfrac{-d_0}{2})\nabla_{e_2\oplus 0}F(x_0,y_0) \\
    & \quad +2(J^{0,1})'(\tfrac{-d_0}{2})\Bigl(w_1(y_0)-w_1(x_0)\Bigr)^2 -\lambda \left(2J'(\tfrac{d_0}{2})-2J'(\tfrac{-d_0}{2})\right)-2\left(w_2(y_0)-w_2(x_0)\right)\phi'(\tfrac {d_0}{2}) \nonumber \\
&\quad +2\kappa(x_0)\left(w_2(y_0)-w_2(x_0)\right)+2\textup{tn}_{\underline \kappa}(\tfrac{d_0}{2})\left(w_2(y_0)-w_2(x_0)\right)^2+\frac{\varepsilon^2(x_0,y_0)}{8\textup{tn}_{\underline \kappa}(\tfrac {d_0}{2})}
 \nonumber 
\end{align}

Note that Inequality \eqref{inequ-case-2} differs from Inequality \eqref{Ineq-case-1} only in that several of the terms are evaluated at $x_0$ rather than $y_0$.

\item[Case 3.] $(w_2(x_0)\geq 0\geq  w_2(y_0))$ 

In this case, we have
\begin{align*}
    &2\kappa(y_0) w_2(y_0)  -2\kappa(x_0) w_2(x_0)  
    \\
    &\geq 2\overline\kappa\left(w_2(y_0)-w_2(x_0)\right) 
\end{align*}

Moreover, by Proposition \ref{J-derivative-comparison} \begin{align*}
    J'(-\tfrac {d_0}{2})w_2^2(x_0)-2J'(\tfrac{d_0}{2})w_2^2(y_0)&\geq 2\textup{tn}_{\underline \kappa}(\tfrac {d_0}{2})\left(w_2^2(x_0)+w_2^2(y_0)\right)\\
    &\geq \textup{tn}_{\underline \kappa}(\tfrac {d_0}{2})\left(w_2(x_0)-w_2(y_0)\right)^2.\nonumber
\end{align*}
 \end{enumerate}

In all three cases, we find the following inequality
\begin{align}\label{ineq-max-after-case}
0\geq    &-\nabla_{E_1,E_1}^2F(x_0,y_0)-\nabla^2_{E_2,E_2}F(x_0,y_0) -2J'(\tfrac{d_0}{2})\nabla_{0\oplus e_2}F(x_0,y_0)-2J'(\tfrac{-d_0}{2})\nabla_{e_2\oplus 0}F(x_0,y_0) \\
    & \quad +2(J^{0,1})'(\tfrac{-d_0}{2})\Bigl(w_1(y_0)-w_1(x_0)\Bigr)^2 -4\lambda \mathcal C(x_0,y_0)-2\left(w_2(y_0)-w_2(x_0)\right)\phi'(\tfrac {d_0}{2}) \nonumber \\
&\quad +2\overline\kappa\left(w_2(y_0)-w_2(x_0)\right)+\textup{tn}_{\underline \kappa}(\tfrac{d_0}{2})\left(w_2(y_0)-w_2(x_0)\right)^2+\frac{\varepsilon^2(x_0,y_0)}{8\textup{tn}_{\underline \kappa}(\tfrac {d_0}{2})}\nonumber
\end{align}
where we have rewritten $\left(2J'(\tfrac{d_0}{2})-2J'(\tfrac{-d_0}{2})\right)=4\mathcal C(x_0,y_0).$

We now estimate all the terms of \eqref{ineq-max-after-case}, in terms of $\phi$ and $\mathcal C.$\\

Since we have assumed that $Z(x_0, y_0) = \alpha$, we have that 
\begin{align*}
    w_2(y_0)-w_2(x_0)=F(x_0,y_0)+\alpha=2\phi(\tfrac{d_0}{2})+\mathcal C(x_0,y_0)+\alpha. 
\end{align*}
This gives 
\begin{eqnarray} \label{phi'-terms} & & \\
    -2\left(w_2(y_0)-w_2(x_0)\right)\phi'(\tfrac {d_0}{2}) &=&-2\alpha\phi'(\tfrac{d_0}{2})-2\phi'(\tfrac{d_0}{2})\mathcal C(x_0,y_0)  -4\phi'(\tfrac{d_0}{2})\phi(\tfrac{d_0}{2}), \nonumber   \\ 
\label{w_2-terms} & & \\
   2\overline \kappa\left(w_2(y_0)-w_2(x_0)\right)
   &=&2\overline \kappa\mathcal C(x_0,y_0)
   +4\overline \kappa\phi(\tfrac{d_0}{2})+2\alpha \overline \kappa , \nonumber \\
    \label{w_2-squared-terms} & & \\
 \textup{tn}_{\underline\kappa}(\tfrac{d_0}{2})\left( w_2(y_0)-w_2(x_0)\right)^2 \nonumber
   &=&\alpha \textup{tn}_{\underline\kappa}(\tfrac{d_0}{2})\left(\alpha +2\mathcal C(x_0,y_0)+4\phi(\tfrac{d_0}{2})\right) \nonumber + 4\phi^2(\tfrac{d_0}{2})\textup{tn}_{\underline \kappa}(\tfrac{d_0}{2}) \\
   & &  +2\mathcal C(x_0,y_0)\Bigl( 2 \textup{tn}_{\underline \kappa}(\tfrac{d_0}{2})\phi(\tfrac{d_0}{2}) +\frac{1}{2} \textup{tn}_{\underline \kappa}(\tfrac{d_0}{2})\mathcal C(x_0,y_0)\Bigr). \nonumber
\end{eqnarray}
From  Lemma \ref{C-is-derivative-of-distance-lemma}, we have that 
\begin{eqnarray}
\label{E_1-derivative-of-F}
    -\nabla^2_{E_1,E_1}F(x_0,y_0)&=&-2\mathcal C(x_0,y_0)\phi '(\tfrac{d_0}{2})+\nabla _{E_1,E_1}^2 (-\mathcal C)(x_0,y_0). \\
\label{E_2-derivative-of-F}-\nabla_{E_2,E_2}^2F(x_0,y_0)&=&\nabla^2_{E_2,E_2}(-\mathcal C)(x_0,y_0) -2\phi ''(\tfrac{d_0}{2}).
\end{eqnarray}
 Also by Lemma \ref{e_2-derivative-of-C}
\begin{align}\label{0opluse_2-terms-of-F}
 &-2J'(\tfrac{d_0}{2})\nabla_{0\oplus e_2}F(x_0,y_0)-2J'(\tfrac{-d_0}{2})\nabla_{e_2\oplus 0}F(x_0,y_0)\\ \nonumber
 &=-2\left(J'(\tfrac{d_0}{2})-J'(-\tfrac{d_0}{2})\right)\phi'(\tfrac{d_0}{2})+J'(\tfrac{d_0}{2})\kappa(y_0)+J'(\tfrac {d_0}{2})^3-J'(-\tfrac{d_0}{2})\kappa(x_0)-J'(-\tfrac {d_0}{2})^3.
\end{align}
Finally, since
\begin{align}\label{extra-positivity-term}
2(J^{0,1})'(\tfrac{-d_0}{2})\Bigl(w_1(y_0)-w_1(x_0)\Bigr)^2\geq 0,
\end{align} this term can be dropped from the analysis. 
Thus, combining \eqref{phi'-terms}, \eqref{w_2-terms},\eqref{w_2-squared-terms}, \eqref{E_1-derivative-of-F}, \eqref{E_2-derivative-of-F}, \eqref{0opluse_2-terms-of-F}, and \eqref{extra-positivity-term} into \eqref{ineq-max-after-case} gives
\begin{align}\label{ineq-max-after-all}
0\geq    &-2\phi ''(\tfrac{d_0}{2})-4\phi'(\tfrac{d_0}{2})\phi(\tfrac{d_0}{2})-8\phi'(\tfrac{d_0}{2})\mathcal C(x_0,y_0)+4\overline \kappa\phi(\tfrac{d_0}{2})\\ \nonumber
&\quad + 4\phi^2(\tfrac{d_0}{2})\textup{tn}_{\underline \kappa}(\tfrac{d_0}{2})+ 4\mathcal C(x_0,y_0) \textup{tn}_{\underline \kappa}(\tfrac{d_0}{2})\phi(\tfrac{d_0}{2})\\\nonumber
&\quad+\alpha\Bigl(2 \overline \kappa-2\phi'(\tfrac{d_0}{2})+\textup{tn}_{\underline\kappa}(\tfrac{d_0}{2})\left(\alpha +2\mathcal C(x_0,y_0)+4\phi(\tfrac{d_0}{2})\right)\Bigr)
   \\\nonumber
   &\quad +2\overline \kappa\mathcal C(x_0,y_0)  +\textup{tn}_{\underline \kappa}(\tfrac{d_0}{2})\mathcal C^2(x_0,y_0)-4\lambda \mathcal C(x_0,y_0)\\\nonumber
   &\quad+J'(\tfrac{d_0}{2})\kappa(y_0)+[J'(\tfrac {d_0}{2})]^3-J'(-\tfrac{d_0}{2})\kappa(x_0)-[J'(-\tfrac {d_0}{2})]^3\\
\nonumber &\quad +\nabla _{E_1,E_1}^2 (-\mathcal C)(x_0,y_0)+\nabla^2_{E_2,E_2}(-\mathcal C)(x_0,y_0)-\frac{\varepsilon^2(x_0,y_0)}{8\textup{tn}_{\underline \kappa}(\tfrac {d_0}{2})}.
\end{align}
Computing the term $\nabla ^2_{E_1,E_1}(-\mathcal C)+\nabla^2_{E_2,E_2}(-\mathcal C)$ see Proposition \ref{E_1+E_2-derivatives-of-C}, we get 
\begin{align*}
    \left(\nabla ^2_{E_1,E_1}+\nabla_{E_2,E_2}^2\right)(-\mathcal C)&
   =\kappa(x_0)J'(\tfrac{-d_0}{2})-\kappa(y_0)J'(\tfrac{d_0}{2})+[J'(\tfrac{-d_0}{2})]^3-[J'(\tfrac{d_0}{2})]^3\\
    \quad &+\mathcal C(x_0,y_0)\nabla_{e_2\oplus (-e_2)}\mathcal C(x_0,y_0)+ \mathcal D(x_0,y_0),
\end{align*}
where $\mathcal D$ is defined in Equation \eqref{def-of-D} and satisfies $|\mathcal D(x,y)|\leq Cd(x,y)\left(|\nabla \kappa|_{\infty}+|\Delta \kappa|_\infty\right) $ for some $C>0$ (to be made precise in Section \ref{DerivativesofCSection}). 
This cancels the fifth line of \eqref{ineq-max-after-all}. Estimating \begin{align*}
    \mathcal C(x_0,y_0)\nabla_{e_2\oplus (-e_2)}\mathcal C(x_0,y_0)\geq \mathcal C(x_0,y_0)\left(\overline \kappa+\textup{tn}_{\overline \kappa}^2\right),
\end{align*}
 we arrive at the following theorem.

 \begin{thm}\label{Propositiongeneralmoc}
 Suppose $(M^2,g)$ is a complete manifold of positive curvature. Let $\Omega \subset M^2$ be a uniformly convex domain with $w=\log u$ the log of the first Dirichlet eigenfunction. Suppose further that $\phi:[0,D/2]\rightarrow \mathbb R$ is a $C^2$ function with $\phi(0)=0$ which satisfies the following inequalities (for any $x,y\in \Omega$):
 \begin{align}\label{inequalities-from-proposition}
     \begin{cases}
         0\leq & -2\phi ''-4\phi \phi'+4\overline\kappa\phi+4\textup{tn}_{\underline \kappa}(\tfrac{d(x,y)}{2})\phi^2+(-4\phi' +2 \textup{tn}_{\underline \kappa}(\tfrac{d(x,y)}{2})\phi)2\mathcal C(x,y)\\ 
   & +\Bigr(-4\lambda+3\overline\kappa+\textup{tn}_{\overline\kappa}^2(\tfrac {d(x,y)}{2})+ \textup{tn}_{\underline \kappa}(\tfrac{d(x,y)}{2})\mathcal C(x,y)\Bigl ) \mathcal C(x,y)\\
   &-\frac{\varepsilon^2(x,y)}{8\textup{tn}_{\underline \kappa}(\tfrac{d(x,y)}{2})}+\mathcal D(x,y)\\
    0 \leq &2 \overline \kappa-2\phi'(\tfrac{d(x,y)}{2})+\textup{tn}_{\underline\kappa}(\tfrac{d(x,y)}{2})\left(2\mathcal C(x,y)+4\phi(\tfrac{d(x,y)}{2})\right)\\
    0>&\phi'(0)-\frac{\underline \kappa}{2}
     \end{cases},
  \end{align}
  where $\varepsilon$ is defined \eqref{epsilon-case-1} and $\mathcal D$ is defined in \eqref{def-of-D}.

Then, for any $x,y\in \Omega$
 \begin{equation} \label{MOC_estimate}
     \langle \nabla w(y), \gamma_{x,y}'(\tfrac{d}{2})\rangle - \langle \nabla w(x), \gamma_{x,y}'(\tfrac{-d}{2})\rangle \leq 2\phi(\tfrac{d}{2})+\mathcal C(x,y)
 \end{equation}
\end{thm}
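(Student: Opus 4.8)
The plan is to prove \eqref{MOC_estimate} by contradiction, carrying the two-point maximum principle of this section to its conclusion. Suppose \eqref{MOC_estimate} fails, so that the function $Z(x,y)=\langle\nabla w(y),\gamma_{x,y}'(\tfrac d2)\rangle-\langle\nabla w(x),\gamma_{x,y}'(-\tfrac d2)\rangle-F(x,y)$, with $F(x,y)=2\phi(\tfrac{d(x,y)}{2})+\mathcal C(x,y)$, has $\sup_{\widehat\Omega}Z=\alpha>0$. First I would locate the maximum. Since $\phi(0)=0$ and $\mathcal C$ vanishes on the diagonal, $F$ is continuous on $\overline{\Omega}\times\overline{\Omega}$ with $F(x,x)=0$, so $Z=\int_{-d/2}^{d/2}\Hess w(\gamma',\gamma')\,ds-F\to0$ as $(x,y)$ approaches the interior diagonal, while Lemma~\ref{Boundary asymptotics bounded MoC} gives $Z<\tfrac\alpha2$ on a neighborhood of $\partial\widehat\Omega$. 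Hence $\{Z\ge\tfrac\alpha2\}$ is a compact subset of $\widehat\Omega$ and the value $\alpha$ is realized at an interior pair $(x_0,y_0)$, necessarily with $x_0\ne y_0$ -- precisely the configuration treated from Subsection~\ref{2-d-subsection} onward for this $F$, using the cancellation identity of Proposition~\ref{CancellationProposition} (postponed to Section~\ref{derivativesection}) to kill the terms $\langle\nabla_r\nabla_r\tfrac{\partial\eta}{\partial s},e_1\rangle$.

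Granting momentarily the log-concavity bound \eqref{log-concavity}, the first- and second-order conditions at $(x_0,y_0)$ yield inequality \eqref{ineq-max-after-all}, and what remains is bookkeeping. I would substitute the formula for $\bigl(\nabla^2_{E_1,E_1}+\nabla^2_{E_2,E_2}\bigr)(-\mathcal C)$ from Proposition~\ref{E_1+E_2-derivatives-of-C} (postponed to Section~\ref{DerivativesofCSection}): its terms $\kappa(x_0)J'(-\tfrac{d_0}{2})-\kappa(y_0)J'(\tfrac{d_0}{2})+[J'(-\tfrac{d_0}{2})]^3-[J'(\tfrac{d_0}{2})]^3$ cancel the fifth line of \eqref{ineq-max-after-all} exactly, $\mathcal D(x_0,y_0)$ is retained as an error term, and $\mathcal C(x_0,y_0)\,\nabla_{e_2\oplus(-e_2)}\mathcal C(x_0,y_0)\ge\mathcal C(x_0,y_0)\bigl(\overline\kappa+\textup{tn}_{\overline\kappa}^2\bigr)$ by Lemma~\ref{e_2-derivative-of-C}, the bound $\kappa\le\overline\kappa$, Proposition~\ref{J-derivative-comparison} and $\mathcal C<0$. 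Collecting the $\phi$- and $\mathcal C$-monomials and using $2J'(\tfrac{d_0}{2})-2J'(-\tfrac{d_0}{2})=4\mathcal C(x_0,y_0)$, inequality \eqref{ineq-max-after-all} becomes
\[
0\ \ge\ \mathcal R(x_0,y_0)\;+\;\alpha\Bigl(2\overline\kappa-2\phi'(\tfrac{d_0}{2})+\textup{tn}_{\underline\kappa}(\tfrac{d_0}{2})\bigl(\alpha+2\mathcal C(x_0,y_0)+4\phi(\tfrac{d_0}{2})\bigr)\Bigr),
\]
where $\mathcal R$ denotes the right-hand side of the first inequality in \eqref{inequalities-from-proposition}, evaluated at $x=x_0,\ y=y_0$. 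By the second inequality in \eqref{inequalities-from-proposition} the coefficient of $\alpha$ is at least $\alpha\,\textup{tn}_{\underline\kappa}(\tfrac{d_0}{2})$, which is positive on the range of $d_0$ in question; hence $\mathcal R(x_0,y_0)\le-\alpha^2\,\textup{tn}_{\underline\kappa}(\tfrac{d_0}{2})<0$, contradicting the first inequality in \eqref{inequalities-from-proposition}. Thus \eqref{MOC_estimate} holds whenever $u$ is log-concave.

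It remains to remove the log-concavity hypothesis \eqref{log-concavity}, and I expect this to be the main obstacle. The key observation making the argument non-circular is that \eqref{MOC_estimate} is self-improving: letting $y\to x$ in direction $v$ forces $\Hess w(v,v)\le\phi'(0)-\tfrac{\kappa}{2}\le\phi'(0)-\tfrac{\underline\kappa}{2}<0$, so \eqref{MOC_estimate} already implies the strict log-concavity \eqref{log-concavity}. I would therefore run a continuity argument over a family $\{\Omega_\tau\}_{\tau\in[0,1]}$ of uniformly convex domains with $\Omega_1=\Omega$ and $\Omega_0$ a small geodesic ball, keeping $\phi$ fixed, and set $P=\{\tau:\text{\eqref{MOC_estimate} holds for }\Omega_\tau\}$. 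For $\tau=0$, on a ball of small enough radius the rescaled first eigenfunction is $C^2$-close to that of a Euclidean ball, whose logarithm is strongly concave, and $\phi'(0)<\tfrac{\underline\kappa}{2}$ makes $F$ strictly dominate $\Hess w_0$ along the (short) minimizing chords, so $0\in P$ with room to spare. The set $P$ is closed by continuity of the eigen-data. It is open because if $\tau_0\in P$ then $w_{\tau_0}$ is log-concave by the self-improvement, so the computation of the preceding paragraphs applies at any interior point where $Z_{\tau_0}$ could vanish; with $\alpha=0$ this would only force the first inequality in \eqref{inequalities-from-proposition} to hold with equality, which a small perturbation of the inequalities (still satisfied by the model $\phi$ of Section~\ref{perturbed-euclidean-model-section}) excludes, while the borderline diagonal behaviour is ruled out using a one-point maximum principle as in \cite{surfacepaper1}; hence $\sup_{\widehat\Omega_{\tau_0}}Z_{\tau_0}<0$ and $P$ contains a neighborhood of $\tau_0$. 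Then $P=[0,1]$, so \eqref{MOC_estimate} holds for $\Omega=\Omega_1$, which is the assertion of the theorem. The genuinely delicate points are this openness step -- excluding the borderline $\alpha=0$ case and handling the near-diagonal behaviour -- and, as ingredients, Propositions~\ref{CancellationProposition} and \ref{E_1+E_2-derivatives-of-C}, whose proofs are deferred to Sections~\ref{derivativesection} and \ref{DerivativesofCSection}.
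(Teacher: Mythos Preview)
Your maximum-principle computation (the first two paragraphs) matches the paper: you correctly feed Proposition~\ref{E_1+E_2-derivatives-of-C} into \eqref{ineq-max-after-all}, and the way you extract a contradiction from the first two inequalities of \eqref{inequalities-from-proposition} using $\alpha>0$ and $\textup{tn}_{\underline\kappa}(\tfrac{d_0}{2})>0$ is right.

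The continuity step is where you diverge from the paper, and your openness argument as written does not go through. Because $Z\to 0$ along the diagonal, one always has $\sup_{\widehat\Omega_\tau}Z_\tau=0$; your claim ``hence $\sup_{\widehat\Omega_{\tau_0}}Z_{\tau_0}<0$'' is therefore false, and the appeal to a perturbation ``still satisfied by the model $\phi$ of Section~\ref{perturbed-euclidean-model-section}'' does not prove the theorem for a general $\phi$ satisfying only the non-strict hypotheses \eqref{inequalities-from-proposition}. Your set $P$ \emph{is} open, but for a different reason than the one you give: if $\tau_0\in P$ then the self-improvement yields $\Hess w_{\tau_0}\le\phi'(0)-\tfrac{\kappa}{2}<0$ strictly; continuity of the eigen-data then gives $\Hess w_\tau<0$ for nearby $\tau$, so log-concavity holds there, the maximum-principle computation applies, and $Z_\tau$ cannot have a positive interior maximum, i.e.\ $\tau\in P$.

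The paper streamlines this by tracking log-concavity directly rather than $P$. Along a curve-shortening family $\Omega(t)$ with $\Omega(0)$ close to a small ball, let $t_0$ be the first time $\max\Hess w_t=0$. At $t_0$ one still has $w_2(y)\le w_2(x)$, so the maximum-principle computation is valid and gives $Z_{t_0}\le 0$; the diagonal limit then forces $\Hess w_{t_0}\le\phi'(0)-\tfrac{\kappa}{2}<0$, contradicting the definition of $t_0$. This is precisely the self-improvement you identified, but applied to the quantity $\max\Hess w$ rather than to $\sup Z$; it sidesteps the $\alpha=0$ borderline, the diagonal issue, and any need for strict versions of the first two inequalities in \eqref{inequalities-from-proposition}.
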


  \begin{proof}  

We consider an increasing family of convex domains $\Omega(t) \subset \Omega$ such that $\Omega(0)$ is a sufficiently close to a very small ball and where $\Omega (1)=\Omega$. Such a family can be constructed by deforming the boundary $\partial \Omega$ via curve shortening flow. This process gives a decreasing family of convex domains which converge to a round point at the time of singularity \cite{Gage1990}. As such, we can stop the flow close to the singular time (when the domain is very close to a small ball). Doing this process backwards in time (and reparametrizing so that it occurs in between $t=0$ and $t=1$), we have the family of interest.

 Since $\Omega(0)$ is very close to a small ball (and the metric is nearly Euclidean in a small neighborhood), elliptic regularity implies that for $t=0$ (and for $t$ sufficiently small), the first eigenfunction of $\Omega(t)$ is strictly log-concave.

 Starting at $\Omega(0)$, we move forwards in time until the first time $t_0$ where $\textup{max}\Hess w=0,$ where the maximum ranges over the unit tangent bundle. By the maximum principle computation above, $Z$ cannot achieve a positive maximum in the interior and thus $Z$ must be non-positive (since it is non-positive on the boundary). However, for any unit vector $X\in T_p\Omega$ and geodesic $\gamma$ such that $\gamma'(0)=X$, we have the inequality
\begin{align*}
    \Hess w(X,X) &=\lim_{d\rightarrow 0^+}\frac{1}{d}\int_{-\tfrac{d}{2}}^{\tfrac{d}{2}}\Hess w (\gamma'(s),\gamma'(s))\,ds\\
    &\leq \lim_{d\rightarrow 0^+}\frac{1}{d}\left(2\phi(\tfrac d2)+\mathcal C(\gamma(\tfrac{-d}{2}),\gamma(\tfrac{d}{2}))\right)\\
    &=\phi'(0)-\frac{\kappa(p)}{2}<0,
\end{align*}
 which induces a contradiction.
 
 This shows that the eigenfunction is log-concave for all $t \in [0,1]$. As such, we can apply the maximum principle to show that $Z$ must be non-positive at $t=1.$ In other words,
\begin{align*}
    \langle \nabla w(y), \gamma'_{x,y}(\tfrac{d}{2})\rangle -\langle \nabla w(x),\gamma'_{x,y}(-\tfrac{d}{2})\rangle \leq  2\phi(\tfrac{d}{2})+\mathcal C(x,y).
\end{align*}
  \end{proof}
  
In Section \ref{DerivativesofCSection}, we estimate $\mathcal D$ and in particular obtain the following estimate to deal with all non-ODE terms in \eqref{inequalities-from-proposition}.
\begin{lem} \label{estiamtesonderivative}
    When the diameter satisfies $D\leq \pi/(2\sqrt{\overline \kappa})$, we have the estimate
 \begin{align}
\nonumber&\Bigl(-4\lambda+3\overline\kappa+\textup{tn}_{\overline\kappa}^2(\tfrac {d(x,y)}{2})+ \textup{tn}_{\underline \kappa}(\tfrac{d(x,y)}{2})\mathcal C(x,y) \Bigr)\mathcal C(x,y)\\ \nonumber
   &-\frac{\varepsilon^2(x,y)}{8\textup{tn}_{\underline \kappa}(\tfrac{d(x,y)}{2})}+\mathcal D(x,y)\\
   \nonumber
   \geq& \Bigr(-4\lambda+3\overline\kappa+ (\overline \kappa-\underline \kappa) (1+\frac\pi2)+\sqrt 2 \overline \kappa \left(\frac{\overline \kappa-\underline \kappa}{\underline \kappa}\right)^2\left(\frac{4{\overline \kappa}}{ \pi\sqrt{\underline \kappa}}+1\right)^2\Bigl )\mathcal C(x,y), \\
   &\,+2\mathcal C(x,y)\left(\frac{-2(\inf\Delta \kappa)_{-}}{\underline \kappa}+\left(\frac{1}{\underline \kappa^2}+\frac{3\pi^2\sqrt8 }{4\overline\kappa\underline \kappa}\right)|\nabla \kappa|^2_\infty+\frac{4\pi|\nabla \kappa|_\infty }{\underline\kappa}\right).\nonumber
 \end{align}
   \end{lem}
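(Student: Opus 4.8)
The plan is to bound each non-ODE term appearing on the right-hand side of the first inequality in \eqref{inequalities-from-proposition} --- namely the coefficient of $\mathcal C(x,y)$ coming from $-4\lambda + 3\overline\kappa + \textup{tn}_{\overline\kappa}^2 + \textup{tn}_{\underline\kappa}\mathcal C$, the term $-\varepsilon^2/(8\textup{tn}_{\underline\kappa})$, and the quantity $\mathcal D(x,y)$ --- and to express everything as a (negative) multiple of $\mathcal C(x,y)$, using that $\mathcal C(x,y) \le -\textup{tn}_{\underline\kappa}(\tfrac{d}{2}) \le 0$ (Lemma~\ref{TwopointGaussiancomparison}), so that multiplying an upper bound on a nonnegative coefficient by $\mathcal C$ reverses the inequality in the desired direction. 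The diameter bound $D \le \pi/(2\sqrt{\overline\kappa})$ is used throughout to keep $\tfrac{d}{2} \le \pi/(4\sqrt{\overline\kappa})$, which gives uniform bounds on $\textup{tn}_{\overline\kappa}(\tfrac d2) \le \overline\kappa$, on $\textup{tn}_{\underline\kappa}(\tfrac d2)$, on $\textup{sn}$ and $\textup{cs}$ at scale $d$, and ensures no blow-up of the tangent-type functions.

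First I would handle the $\textup{tn}_{\overline\kappa}^2 - \textup{tn}_{\underline\kappa}\cdot(-\mathcal C)$ part. Using Lemma~\ref{TwopointGaussiancomparison} once more, $-\mathcal C(x,y) \le \textup{tn}_{\overline\kappa}(\tfrac d2)$, so $\textup{tn}_{\underline\kappa}(\tfrac d2)\mathcal C(x,y) \ge -\textup{tn}_{\underline\kappa}(\tfrac d2)\textup{tn}_{\overline\kappa}(\tfrac d2)$, and hence $\textup{tn}_{\overline\kappa}^2(\tfrac d2) + \textup{tn}_{\underline\kappa}(\tfrac d2)\mathcal C(x,y) \ge \textup{tn}_{\overline\kappa}(\tfrac d2)\big(\textup{tn}_{\overline\kappa}(\tfrac d2) - \textup{tn}_{\underline\kappa}(\tfrac d2)\big)$; a mean-value estimate on $K \mapsto \textup{tn}_K(s)$ together with $\textup{tn}_{\overline\kappa}(\tfrac d2) \le \overline\kappa \cdot \tfrac{\pi}{4}\cdot\text{(const)}$ --- and more simply $\textup{tn}_{\overline\kappa}(\tfrac d2)\le \overline\kappa$ when $\sqrt{\overline\kappa}\tfrac d2\le\pi/4$ since $\tan\theta\le 4\theta/\pi$ there --- yields the contribution $(\overline\kappa-\underline\kappa)(1+\tfrac\pi2)$ claimed (after also absorbing the $3\overline\kappa$ which stays as is). For the $\varepsilon^2$ term I would use $|\varepsilon(x,y)| = 2|\kappa(y_0)-\kappa(x_0)| \le 2|\nabla\kappa|_\infty\, d(x,y)$, together with the lower bound $\textup{tn}_{\underline\kappa}(\tfrac d2) \ge \underline\kappa\,\tfrac d2$ (valid for $\underline\kappa>0$), to get $\varepsilon^2/(8\textup{tn}_{\underline\kappa}) \le 4|\nabla\kappa|_\infty^2\, d^2/(8\underline\kappa\cdot d/2) = |\nabla\kappa|_\infty^2 d/\underline\kappa$, and then rewrite $d$ in terms of $\mathcal C$ via $d \le -\mathcal C/(\tfrac{\underline\kappa}{2})\cdot(\text{const})$, i.e. using $\mathcal C(x,y)\le -\textup{tn}_{\underline\kappa}(\tfrac d2) \le -\underline\kappa \tfrac d2$, so $d \le -2\mathcal C/\underline\kappa$; this converts the bound into $2\mathcal C \cdot \tfrac{|\nabla\kappa|_\infty^2}{\underline\kappa^2}\cdot(\text{const})$, contributing to the $|\nabla\kappa|_\infty^2/\underline\kappa^2$ and related terms in the second line of the claimed estimate. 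The $\mathcal D(x,y)$ term is controlled by the bound $|\mathcal D(x,y)| \le C d(x,y)\big(|\nabla\kappa|_\infty + |\Delta\kappa|_\infty\big)$ promised in Section~\ref{DerivativesofCSection} (more precisely the refined version separating $(\inf\Delta\kappa)_-$, $|\nabla\kappa|_\infty$, $|\nabla\kappa|_\infty^2$); again converting $d$ to a multiple of $-\mathcal C$ via $\mathcal C \le -\underline\kappa d/2$ produces the $-2(\inf\Delta\kappa)_-/\underline\kappa$, the $(1/\underline\kappa^2 + 3\pi^2\sqrt8/(4\overline\kappa\underline\kappa))|\nabla\kappa|_\infty^2$, and the $4\pi|\nabla\kappa|_\infty/\underline\kappa$ coefficients multiplying $2\mathcal C(x,y)$.

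The extra cross term $\sqrt2\,\overline\kappa\left(\tfrac{\overline\kappa-\underline\kappa}{\underline\kappa}\right)^2\left(\tfrac{4\overline\kappa}{\pi\sqrt{\underline\kappa}}+1\right)^2$ I expect arises from estimating the precise form of $\mathcal D$ (which contains a term quadratic in $\mathcal C - (-\textup{tn}_{\underline\kappa})$, i.e. measuring how far $\mathcal C$ is from its constant-curvature value): bounding $|\mathcal C(x,y) + \textup{tn}_{\underline\kappa}(\tfrac d2)|$ by something like $\big(\tfrac{\overline\kappa-\underline\kappa}{\underline\kappa}\big)\cdot\textup{tn}_{\overline\kappa}(\tfrac d2)\cdot(\text{const})$ using Lemma~\ref{TwopointGaussiancomparison} and Proposition~\ref{J-derivative-comparison}, squaring, and multiplying by the $\overline\kappa$-scale factor from the $\mathcal D$ formula, then again pulling out a factor of $\mathcal C$.

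The main obstacle is not any single inequality but the bookkeeping: one must (i) use the precise expression for $\mathcal D$ from Section~\ref{DerivativesofCSection} --- which is not yet available in this excerpt, so the proof genuinely depends on Proposition~\ref{E_1+E_2-derivatives-of-C} and the estimate $|\mathcal D(x,y)|\le Cd(x,y)(|\nabla\kappa|_\infty+|\Delta\kappa|_\infty)$ in its sharp, term-by-term form; (ii) consistently trade every explicit power of $d(x,y)$ for a multiple of $-\mathcal C(x,y)$ using $\underline\kappa \tfrac d2 \le -\mathcal C(x,y) \le \textup{tn}_{\overline\kappa}(\tfrac d2)$, which introduces the factors of $1/\underline\kappa$ and $1/\overline\kappa$; and (iii) verify that all the constants ($\pi$, $\sqrt2$, $\sqrt8$, $3\pi^2/4$, etc.) come out exactly as stated, which requires care with the small-angle bounds $\theta \le \tan\theta \le \tfrac{4}{\pi}\theta$ on $[0,\pi/4]$ and with where the worst case $d = D = \pi/(2\sqrt{\overline\kappa})$ enters. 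Once these substitutions are organized, the inequality follows by collecting the coefficient of $\mathcal C(x,y)$ on one side and the coefficient of $2\mathcal C(x,y)$ on the other, exactly as displayed.
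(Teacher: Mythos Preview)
Your approach is essentially the same as the paper's: split the left-hand side into the bracket coefficient of $\mathcal C$, the $\varepsilon^2$ term, and $\mathcal D$; bound each using Lemma~\ref{TwopointGaussiancomparison} and the Jacobi-field comparisons of Section~\ref{Jacobifieldcomparisonsection}; and convert every factor of $d(x,y)$ into a multiple of $-\mathcal C(x,y)$ via $\mathcal C\le -\textup{tn}_{\underline\kappa}(\tfrac d2)\le -\underline\kappa\,\tfrac d2$. Your identification of the origin of the $\sqrt2\,\overline\kappa(\ldots)^2$ term is also correct: it comes from the piece $-(J^{0,1})'(-\tfrac d2)\bigl(J'(\tfrac d2)+J'(-\tfrac d2)\bigr)^2$ of $\mathcal D$, which is quadratic in the asymmetry $J'(\tfrac d2)+J'(-\tfrac d2)$ and hence vanishes in constant curvature.

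There is, however, a directional slip in your first estimate. You correctly note that since $\mathcal C<0$ you need an \emph{upper} bound on the bracket $-4\lambda+3\overline\kappa+\textup{tn}_{\overline\kappa}^2+\textup{tn}_{\underline\kappa}\,\mathcal C$, but then you invoke $-\mathcal C\le \textup{tn}_{\overline\kappa}$ to conclude $\textup{tn}_{\overline\kappa}^2+\textup{tn}_{\underline\kappa}\,\mathcal C\ge \textup{tn}_{\overline\kappa}(\textup{tn}_{\overline\kappa}-\textup{tn}_{\underline\kappa})$, which is a \emph{lower} bound and goes the wrong way when multiplied by $\mathcal C<0$. The paper uses the other half of Lemma~\ref{TwopointGaussiancomparison}, namely $\mathcal C\le -\textup{tn}_{\underline\kappa}$, to get $\textup{tn}_{\underline\kappa}\,\mathcal C\le -\textup{tn}_{\underline\kappa}^2$ and hence $\textup{tn}_{\overline\kappa}^2+\textup{tn}_{\underline\kappa}\,\mathcal C\le \textup{tn}_{\overline\kappa}^2-\textup{tn}_{\underline\kappa}^2$; this difference is then bounded by $(\overline\kappa-\underline\kappa)(1+\tfrac\pi2)$ via the derivative-in-$K$ computation \eqref{tn_k-square-terms}. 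With this one-line correction your argument goes through and matches the paper's.
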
 
 
\subsection{Explicit modulus of concavity estimates}

We are now in the position to starting applying Theorem \ref{Propositiongeneralmoc} to several choices of $\phi.$ We start with the simplest choice, which is $\phi \equiv 0.$
\begin{thm}\label{nonradialthm}
Suppose that $(M^2,g)$ is a surface of positive curvature $\kappa$ such that $0<\underline \kappa\leq \kappa <\overline \kappa.$ Let $\Omega \subset M^2$ be uniformly and geodesically convex with $\textup{diam}(\Omega)\leq \tfrac{\pi}{2\sqrt{\overline \kappa}}$ and suppose that 
\begin{eqnarray}\label{assumption-thm-1}
-2\lambda+\tfrac32\overline\kappa+ (\overline \kappa-\underline \kappa) (\tfrac12+\tfrac\pi4)+\sqrt 2 \overline \kappa \left(\tfrac{\overline \kappa-\underline \kappa}{\underline \kappa}\right)^2\left(\tfrac{4{\overline \kappa}}{ \pi\sqrt{\underline \kappa}}+1\right)^2 \\
-\tfrac{2}{\underline \kappa}(\inf\Delta \kappa)_{-}+\left(\tfrac{1}{\underline \kappa^2}+\tfrac{3\pi^2\sqrt8 }{4\overline\kappa\underline \kappa}\right)|\nabla \kappa|^2_\infty+\tfrac{4\pi }{\underline\kappa}|\nabla \kappa|_\infty<0. \nonumber
   \end{eqnarray}
Then one has that 
\begin{align}\label{two-point-moc}
     \langle \nabla w(y), \gamma_{x,y}'(\tfrac{d}{2})\rangle - \langle \nabla w(x), \gamma_{x,y}'(-\tfrac{d}{2})\rangle \leq \mathcal C(x,y).
\end{align}
\end{thm}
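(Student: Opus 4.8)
The plan is to obtain Theorem~\ref{nonradialthm} as the special case $\phi\equiv 0$ of Theorem~\ref{Propositiongeneralmoc}. Since $\phi\equiv 0$ is a $C^2$ function on $[0,D/2]$ with $\phi(0)=0$, and since the conclusion \eqref{MOC_estimate} collapses to $\langle\nabla w(y),\gamma_{x,y}'(\tfrac d2)\rangle-\langle\nabla w(x),\gamma_{x,y}'(-\tfrac d2)\rangle\leq\mathcal C(x,y)$ when $\phi\equiv 0$, it suffices to verify the three inequalities in \eqref{inequalities-from-proposition} for this choice. Every summand in \eqref{inequalities-from-proposition} carrying a factor of $\phi$, $\phi'$ or $\phi''$ then vanishes, so the three conditions reduce to: (i) $0\leq\big(-4\lambda+3\overline\kappa+\textup{tn}_{\overline\kappa}^2(\tfrac{d(x,y)}{2})+\textup{tn}_{\underline\kappa}(\tfrac{d(x,y)}{2})\mathcal C(x,y)\big)\mathcal C(x,y)-\tfrac{\varepsilon^2(x,y)}{8\textup{tn}_{\underline\kappa}(\tfrac{d(x,y)}{2})}+\mathcal D(x,y)$; (ii) $0\leq 2\overline\kappa+2\textup{tn}_{\underline\kappa}(\tfrac{d(x,y)}{2})\mathcal C(x,y)$; and (iii) $0>-\tfrac{\underline\kappa}{2}$.

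Condition (iii) holds because $\underline\kappa>0$. For (ii) I would invoke Lemma~\ref{TwopointGaussiancomparison}: under $\kappa\leq\overline\kappa$ it gives $\mathcal C(x,y)\geq-\textup{tn}_{\overline\kappa}(\tfrac{d(x,y)}{2})$, so multiplying by $-\textup{tn}_{\underline\kappa}(\tfrac{d}{2})\le 0$ and using the monotonicity $\textup{tn}_{\underline\kappa}\leq\textup{tn}_{\overline\kappa}$ (which follows from $\underline\kappa\leq\overline\kappa$) yields $-\textup{tn}_{\underline\kappa}(\tfrac{d}{2})\mathcal C(x,y)\leq\textup{tn}_{\overline\kappa}^2(\tfrac{d}{2})$. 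Finally $d\leq D\leq\tfrac{\pi}{2\sqrt{\overline\kappa}}$ forces $\sqrt{\overline\kappa}\,\tfrac d2\leq\tfrac\pi4$, hence $\tan(\sqrt{\overline\kappa}\,\tfrac d2)\leq 1$ and $\textup{tn}_{\overline\kappa}^2(\tfrac d2)=\overline\kappa\tan^2(\sqrt{\overline\kappa}\,\tfrac d2)\leq\overline\kappa$; therefore $-\textup{tn}_{\underline\kappa}(\tfrac d2)\mathcal C(x,y)\leq\overline\kappa$, which is exactly (ii).

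For (i), the right-hand side is precisely the ``non-ODE'' expression bounded from below in Lemma~\ref{estiamtesonderivative}, whose hypothesis $D\leq\pi/(2\sqrt{\overline\kappa})$ is in force; that lemma writes the lower bound as $A\,\mathcal C(x,y)$ for the explicit constant $A=A(\lambda,\underline\kappa,\overline\kappa,|\nabla\kappa|_\infty,(\inf\Delta\kappa)_-)$ occurring there. By Lemma~\ref{TwopointGaussiancomparison} again, $\mathcal C(x,y)\leq-\textup{tn}_{\underline\kappa}(\tfrac{d(x,y)}{2})<0$ whenever $x\neq y$ (the diagonal case being degenerate and handled as in Theorem~\ref{Propositiongeneralmoc}), so it is enough to show $A<0$. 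Comparing term by term, $\tfrac12 A$ agrees with the left-hand side of \eqref{assumption-thm-1} except that the coefficient $\sqrt2$ of the $\overline\kappa(\tfrac{\overline\kappa-\underline\kappa}{\underline\kappa})^2(\tfrac{4\overline\kappa}{\pi\sqrt{\underline\kappa}}+1)^2$ term is replaced by the smaller $\tfrac{\sqrt2}{2}$; since that term is nonnegative, $\tfrac12 A$ is no larger than the left-hand side of \eqref{assumption-thm-1}, which is negative by hypothesis. Hence $A<0$, so $A\,\mathcal C(x,y)>0$ and (i) holds.

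Having verified (i)--(iii), Theorem~\ref{Propositiongeneralmoc} applies directly and delivers \eqref{two-point-moc}. Since the genuinely hard work -- the two-point maximum principle computation and the control of the curvature-derivative terms packaged in $\mathcal D$ and $\varepsilon$ -- is already contained in Theorem~\ref{Propositiongeneralmoc} and Lemma~\ref{estiamtesonderivative}, there is no substantial obstacle here; the only step that demands care is the bookkeeping in (i), namely identifying the constant $A$ of Lemma~\ref{estiamtesonderivative} with (twice) the quantity in \eqref{assumption-thm-1} and tracking the negative sign of $\mathcal C(x,y)$ so that every inequality points in the correct direction.
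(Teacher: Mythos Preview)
Your proof is correct and follows exactly the paper's approach: apply Theorem~\ref{Propositiongeneralmoc} with $\phi\equiv 0$, invoke Lemma~\ref{estiamtesonderivative}, and use $\mathcal C(x,y)<0$ (from Lemma~\ref{TwopointGaussiancomparison}) to flip the sign. You have simply filled in the details the paper leaves implicit, including the verification of conditions (ii) and (iii) and the observation that the coefficient discrepancy between $A/2$ and \eqref{assumption-thm-1} works in your favor.
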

\begin{rem}
Theorem \ref{nonradialthm} also holds true under the weaker assumption that without any diameter restriction on $\Omega$ and any $x,y\in\Omega$
\begin{align}\label{weak-assumption-thm-1}
\Bigr(-4\lambda+3\overline\kappa+\textup{tn}_{\overline\kappa}^2(\tfrac {d}{2})+ \textup{tn}_{\underline \kappa}(\tfrac{d}{2})\mathcal C(x,y)\Bigl ) \mathcal C(x,y)
   -\frac{\varepsilon^2(x,y)}{8\textup{tn}_{\underline \kappa}(\tfrac{d}{2})}+\mathcal D(x,y)>0,
    \end{align}
    where $\mathcal C$ is defined in \eqref{def-of-C}, $\mathcal D$ in \eqref{def-of-D} and $\varepsilon$ in \eqref{epsilon-case-1}. 
\end{rem}
\begin{proof}[Proof of Theorem \ref{nonradialthm}]
    This follows from Proposition~\ref{Propositiongeneralmoc} with $\phi \equiv 0$, Estimate \eqref{estiamtesonderivative} and 
    the fact that $\mathcal C(x,y) <0.$ 
\end{proof}
\begin{rem}
Note that if $(M^2,g)$ satisfies the assumptions of Theorem \ref{nonradialthm}, one recovers the Hessian estimate from \cite{surfacepaper1}. Indeed, dividing \eqref{two-point-moc} by $d$ and after passing to the limit as $d\rightarrow 0^+,$ one recovers 
\begin{align}\label{one-point-estimate}
   \Hess w\leq -\frac{\kappa}{2}.
\end{align}
Note that in that paper, we used the function $\frac{\kappa}{2}$ to absorb problematic term in a one-point maximum principle. Interestingly, $\mathcal C$ helps us to control problematic terms in a similar way and these two quantities are related by Equation \eqref{collapse-of-C}. 
\end{rem}
We now apply Theorem \ref{MOC-thm} again, this time with $\phi=\psi+\textup{tn}_{\underline \kappa}$ 
and derive differential inequalities for $\psi$.
The result below generalizes Theorem 3.6 in \cite{10.4310/jdg/1559786428} to surfaces of non-constant sectional curvature. Their version of the theorem provides a parabolic version of this argument. We only state the elliptic version as the proof of the parabolic version is almost identical.
 \begin{thm}\label{MOC-thm}
 Suppose the $(M^2,g)$ is a complete manifold of positive sectional curvature $\kappa$, such that $0<\underline \kappa\leq \kappa\leq \overline \kappa$ which satisfies
 \begin{align}\label{curvature-assumption-in-moc-thm}
&-\tfrac12\overline\kappa+ (\overline \kappa-\underline \kappa) (\tfrac12+\tfrac\pi4)+\sqrt 2 \overline \kappa \left(\tfrac{\overline \kappa-\underline \kappa}{\underline \kappa}\right)^2\left(\tfrac{4{\overline \kappa}}{ \pi\sqrt{\underline \kappa}}+1\right)^2\\\nonumber
&\quad -\tfrac{2}{\underline \kappa}(\inf\Delta \kappa)_{-}+\left(\tfrac{1}{\underline \kappa^2}+\tfrac{3\pi^2\sqrt8 }{4\overline\kappa\underline \kappa}\right)|\nabla \kappa|^2_\infty+\tfrac{4\pi }{\underline\kappa}|\nabla \kappa|_\infty<0
\end{align}
Then we have that for any convex $\Omega\subset M,$ with $\textup{diam}(\Omega)=D\leq \tfrac{\pi}{2\sqrt{\overline \kappa}}$ 
 \begin{align*}
     \langle \nabla w(y), \gamma_{x,y}'(\tfrac{d}{2})\rangle - \langle \nabla w(x), \gamma_{x,y}'(\tfrac{-d}{2})\rangle \leq 2\psi(\tfrac{d}{2})+\textup{tn}_{\underline \kappa}(\tfrac{d}{2}),
 \end{align*}
 where $\psi:[0,D/2]\rightarrow \mathbb R$ satisfies
 \begin{align*}
     \begin{cases}
        0\geq  \psi ''+2\psi \psi'-2\textup
{tn}_{\underline \kappa}\left( \psi '+\psi^2+ \lambda \right)-2\psi\left(\overline{\kappa}-\underline\kappa\right)-4 \psi' \left(\textup{tn}_{\overline \kappa}-\textup{tn}_{\underline \kappa}\right)\\
 0\geq -2(\overline \kappa-\underline \kappa)+2\psi'-4\textup{tn}_{\underline \kappa}\psi-2\textup{tn}_{\underline \kappa}^2+2\textup{tn}_{\underline\kappa}\textup{tn}_{\overline \kappa}\\
 0>\psi'(0)+\frac{\underline\kappa}{2}\\
0\geq \psi'\\
 0=\psi(0).
     \end{cases}
 \end{align*}
  \end{thm}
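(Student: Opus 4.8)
The plan is to deduce Theorem~\ref{MOC-thm} from Theorem~\ref{Propositiongeneralmoc} by substituting $\phi=\psi+\textup{tn}_{\underline\kappa}$ into the three differential inequalities \eqref{inequalities-from-proposition} and verifying that they follow from the displayed system for $\psi$ together with the curvature hypothesis \eqref{curvature-assumption-in-moc-thm}. First I would record the basic derivatives: $\phi'=\psi'+\textup{tn}_{\underline\kappa}'=\psi'+\underline\kappa+\textup{tn}_{\underline\kappa}^2$ (using the Riccati identity $\textup{tn}_K'=K+\textup{tn}_K^2$), and $\phi''=\psi''+\textup{tn}_{\underline\kappa}''$. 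I would also use the comparison $-\textup{tn}_{\overline\kappa}(\tfrac d2)\le \mathcal C(x,y)\le -\textup{tn}_{\underline\kappa}(\tfrac d2)$ from Lemma~\ref{TwopointGaussiancomparison}, which in particular gives $\mathcal C<0$ and lets me bound every term containing $\mathcal C$; throughout I abbreviate $s=\tfrac d2$ and write $\textup{tn}_{\underline\kappa}$ for $\textup{tn}_{\underline\kappa}(s)$, etc.

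The second inequality of \eqref{inequalities-from-proposition} is the easiest: substituting $\phi=\psi+\textup{tn}_{\underline\kappa}$ and $\phi'=\psi'+\underline\kappa+\textup{tn}_{\underline\kappa}^2$, the terms $2\overline\kappa$, $-2\underline\kappa$, $-2\textup{tn}_{\underline\kappa}^2$, $4\textup{tn}_{\underline\kappa}^2$ and $4\textup{tn}_{\underline\kappa}\psi$ reorganize (using $\mathcal C\le -\textup{tn}_{\underline\kappa}$ to replace $2\textup{tn}_{\underline\kappa}\mathcal C$ by something $\le -2\textup{tn}_{\underline\kappa}^2$) into exactly $2(\overline\kappa-\underline\kappa)-2\psi'+4\textup{tn}_{\underline\kappa}\psi+2\textup{tn}_{\underline\kappa}^2-2\textup{tn}_{\underline\kappa}\textup{tn}_{\overline\kappa}\le 0$, which is the second inequality in the $\psi$-system once one notes $\textup{tn}_{\underline\kappa}\le\textup{tn}_{\overline\kappa}$ so that $2\textup{tn}_{\underline\kappa}^2-2\textup{tn}_{\underline\kappa}\textup{tn}_{\overline\kappa}\le 0$ can be absorbed. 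The third inequality is immediate: $\phi'(0)=\psi'(0)+\underline\kappa$, so $\phi'(0)-\tfrac{\underline\kappa}2=\psi'(0)+\tfrac{\underline\kappa}2<0$ is precisely the hypothesis $\psi'(0)+\tfrac{\underline\kappa}2<0$; and $\phi(0)=\psi(0)+\textup{tn}_{\underline\kappa}(0)=0$.

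The first inequality is the main obstacle, since it is the long one and mixes the ODE part with the error terms $\varepsilon$ and $\mathcal D$. Here I would proceed in two stages. Stage one: handle all the non-ODE terms using Lemma~\ref{estiamtesonderivative}, which by design lower-bounds $\bigl(-4\lambda+3\overline\kappa+\textup{tn}_{\overline\kappa}^2+\textup{tn}_{\underline\kappa}\mathcal C\bigr)\mathcal C-\tfrac{\varepsilon^2}{8\textup{tn}_{\underline\kappa}}+\mathcal D$ by $\bigl(-4\lambda+3\overline\kappa+(\overline\kappa-\underline\kappa)(1+\tfrac\pi2)+\sqrt2\,\overline\kappa(\tfrac{\overline\kappa-\underline\kappa}{\underline\kappa})^2(\tfrac{4\overline\kappa}{\pi\sqrt{\underline\kappa}}+1)^2\bigr)\mathcal C+2\mathcal C\bigl(\tfrac{-2(\inf\Delta\kappa)_-}{\underline\kappa}+(\tfrac1{\underline\kappa^2}+\tfrac{3\pi^2\sqrt8}{4\overline\kappa\underline\kappa})|\nabla\kappa|_\infty^2+\tfrac{4\pi|\nabla\kappa|_\infty}{\underline\kappa}\bigr)$; since $\mathcal C<0$, the coefficient multiplying $\mathcal C$ is exactly (twice) the left side of \eqref{curvature-assumption-in-moc-thm} plus $-4\lambda+3\overline\kappa$ reorganized, wait—more carefully, one checks the bracket equals $-4\lambda+4\overline\kappa+2\times(\text{the expression in }\eqref{curvature-assumption-in-moc-thm})$ up to the $\psi$-free pieces, and hypothesis \eqref{curvature-assumption-in-moc-thm} forces that bracket to be $\le -4\lambda+\text{(something)}$, so multiplying by $\mathcal C<0$ flips the sign favorably and produces a term of the form $-4\lambda\mathcal C\ge 0$ plus controllable remainders. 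Stage two: with the non-ODE terms disposed of and $\mathcal C$ replaced by $-\textup{tn}_{\underline\kappa}$ wherever it appears multiplied by a sign-definite coefficient, the remaining ODE terms $-2\phi''-4\phi\phi'+4\overline\kappa\phi+4\textup{tn}_{\underline\kappa}\phi^2+(-4\phi'+2\textup{tn}_{\underline\kappa}\phi)2\mathcal C$ must be shown $\ge$ a multiple of the first $\psi$-inequality. Substituting $\phi=\psi+\textup{tn}_{\underline\kappa}$ and expanding, the pure-$\textup{tn}_{\underline\kappa}$ terms cancel by the Riccati identity ($-2\textup{tn}_{\underline\kappa}''-4\textup{tn}_{\underline\kappa}\textup{tn}_{\underline\kappa}'+4\underline\kappa\textup{tn}_{\underline\kappa}+4\textup{tn}_{\underline\kappa}^3+\dots=0$ after using $\textup{tn}_{\underline\kappa}''=2\textup{tn}_{\underline\kappa}\textup{tn}_{\underline\kappa}'$), the cross terms assemble into $-2(\psi''+2\psi\psi'-2\textup{tn}_{\underline\kappa}(\psi'+\psi^2+\lambda)-2\psi(\overline\kappa-\underline\kappa)-4\psi'(\textup{tn}_{\overline\kappa}-\textup{tn}_{\underline\kappa}))$ — which is $\ge 0$ precisely because of the first inequality of the $\psi$-system — plus the $-4\lambda\mathcal C\ge 0$ term from stage one providing the $\lambda$-contribution (note $-4\lambda\mathcal C = 4\lambda\textup{tn}_{\underline\kappa}+4\lambda(\textup{tn}_{\underline\kappa}+\mathcal C)$ with the last factor $\ge 0$). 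The delicate bookkeeping is matching $\overline\kappa$ versus $\underline\kappa$ in the various terms; the hypothesis \eqref{curvature-assumption-in-moc-thm} is engineered exactly so that the discrepancies $(\overline\kappa-\underline\kappa)$, together with the curvature-derivative errors, stay on the correct side. Finally I would remark that the parabolic statement follows by the identical computation applied to the time-dependent version of Theorem~\ref{Propositiongeneralmoc}, as in \cite{10.4310/jdg/1559786428}.
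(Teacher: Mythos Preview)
Your overall strategy is exactly the paper's: set $\phi=\psi+\textup{tn}_{\underline\kappa}$, invoke Lemma~\ref{TwopointGaussiancomparison} for two-sided bounds on $\mathcal C$, apply Lemma~\ref{estiamtesonderivative} to the error terms, and check that the three inequalities in \eqref{inequalities-from-proposition} reduce to the $\psi$-system under \eqref{curvature-assumption-in-moc-thm}. However, there are two execution errors worth flagging.

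First, in the second inequality you bound $2\textup{tn}_{\underline\kappa}\mathcal C$ from \emph{above} via $\mathcal C\le -\textup{tn}_{\underline\kappa}$, but the goal is to show the full expression is $\ge 0$, so you need a \emph{lower} bound: $2\textup{tn}_{\underline\kappa}\mathcal C\ge -2\textup{tn}_{\underline\kappa}\textup{tn}_{\overline\kappa}$ from $\mathcal C\ge -\textup{tn}_{\overline\kappa}$. This is precisely what produces the $-2\textup{tn}_{\underline\kappa}\textup{tn}_{\overline\kappa}$ term in the $\psi$-hypothesis (which you in fact write down, so the confusion is localized).

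Second, and more importantly, in the first inequality you claim the cross terms ``assemble'' into $-2\times(\text{first }\psi\text{-inequality})$ after substitution and Riccati cancellation. This is not an algebraic identity: the coefficient $-4\phi'+2\textup{tn}_{\underline\kappa}\phi=-4\psi'-4\underline\kappa+2\textup{tn}_{\underline\kappa}\psi-2\textup{tn}_{\underline\kappa}^2$ in front of $2\mathcal C$ is not sign-definite, and one must split it piecewise. The paper uses $\psi'\le 0$ to get $(-4\psi')2\mathcal C\ge 8\psi'\textup{tn}_{\overline\kappa}$ (via $\mathcal C\ge -\textup{tn}_{\overline\kappa}$) and $\psi\le 0$ to get $(2\textup{tn}_{\underline\kappa}\psi)2\mathcal C\ge -4\textup{tn}_{\underline\kappa}^2\psi$ (via $\mathcal C\le -\textup{tn}_{\underline\kappa}$), while the constant pieces use $\mathcal C\le -\textup{tn}_{\underline\kappa}$. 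Only after this term-by-term splitting does the $8\psi'(\textup{tn}_{\overline\kappa}-\textup{tn}_{\underline\kappa})$ contribution and the residual $4\overline\kappa\textup{tn}_{\underline\kappa}$ emerge (the pure-$\textup{tn}_{\underline\kappa}$ terms do \emph{not} cancel to zero). The hypotheses $\psi\le 0$ and $\psi'\le 0$ in the theorem statement are there exactly for this step, and your outline never invokes them.
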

  \begin{proof}
      This follows from setting $\phi=\psi +\textup{tn}_{\underline \kappa}.$ Indeed, Using 
   $\psi, \psi '\leq 0,$ and  Lemma \ref{TwopointGaussiancomparison}, we have
      \begin{eqnarray*}
        (-4\phi' +2 \textup{tn}_{\underline \kappa}(\tfrac{d(x,y)}{2})\phi)2\mathcal C(x,y) &=& (-4\psi'-4{\underline \kappa} + 2 \textup{tn}_{\underline \kappa}\psi- 2\textup{tn}_{\underline \kappa}^2)2\mathcal C(x,y)  \\
        & \ge &  8 \psi' \textup{tn}_{\overline \kappa} +8\underline\kappa\textup{tn}_{\underline \kappa} - 4 \textup{tn}_{\underline \kappa}^2 \psi  + 4 \textup{tn}_{\underline \kappa}^3,
      \end{eqnarray*}
where we used
\[  \textup{tn}_{\underline \kappa}' =  \underline \kappa + \textup{tn}_{\underline \kappa}^2. \]
     Hence  \begin{align}\label{ineq-1-in-proof-of-moc-thm}
          & -2\phi ''-4\phi \phi'+4\overline\kappa\phi+4\textup{tn}_{\underline \kappa}(\tfrac{d(x,y)}{2})\phi^2+(-4\phi' +2 \textup{tn}_{\underline \kappa}(\tfrac{d(x,y)}{2})\phi)2\mathcal C(x,y)\\ \nonumber
         &  \ge -2(\psi''+2\underline \kappa\textup{tn}_{\underline \kappa} + 2{\textup{tn}_{\underline \kappa}^3})-4(\psi +\textup{tn}_{\underline \kappa})(\psi'+\textup{tn}^2_{\underline \kappa} + \underline{\kappa}) +4\overline \kappa (\psi+\textup{tn}_{\underline \kappa})+4\textup{tn}_{\underline \kappa} (\psi+\textup{tn}_{\underline \kappa})^2 \nonumber \\
         &\ \ \ \ \ \ \ \ \  + 8 \psi' \textup{tn}_{\overline \kappa} +8\underline\kappa\textup{tn}_{\underline \kappa} - 4 \textup{tn}_{\underline \kappa}^2 \psi  + 4 \textup{tn}_{\underline \kappa}^3  \nonumber\\
         & = -2\psi ''-4\psi \psi'+4\textup
{tn}_{\underline \kappa}\left( \psi '+\psi^2 \right)+4\psi( \overline{\kappa} -\underline \kappa) +8 \psi' \left(\textup{tn}_{\overline \kappa}-\textup{tn}_{\underline \kappa}\right)  + 4\overline \kappa\textup{tn}_{\underline \kappa}. \nonumber
         \end{align}
         Now in order to apply Theorem \ref{Propositiongeneralmoc}, we use \eqref{ineq-1-in-proof-of-moc-thm}, Lemma \ref{estiamtesonderivative}, \eqref{curvature-assumption-in-moc-thm} together with \eqref{positivityofcurvature} to obtain the estimate
         \begin{align*}
                & -2\phi ''-4\phi \phi'+4\overline\kappa\phi+4\textup{tn}_{\underline \kappa}(\tfrac{d(x,y)}{2})\phi^2+(-4\phi' +2 \textup{tn}_{\underline \kappa}(\tfrac{d(x,y)}{2})\phi)2\mathcal C(x,y)\\  & +\Bigr(-4\lambda+3\overline\kappa+\textup{tn}_{\overline\kappa}^2(\tfrac {d(x,y)}{2})+ \textup{tn}_{\underline \kappa}(\tfrac{d(x,y)}{2})\mathcal C(x,y)\Bigl ) \mathcal C(x,y)\\
   &-\frac{\varepsilon^2(x,y)}{8\textup{tn}_{\underline \kappa}(\tfrac{d(x,y)}{2})}+\mathcal D(x,y)\\
                \geq &  -2\psi ''-4\psi \psi'+4\textup
{tn}_{\underline \kappa}\left( \psi '+\psi^2 +\lambda\right)+4\psi( \overline{\kappa} -\underline \kappa) +8 \psi' \left(\textup{tn}_{\overline \kappa}-\textup{tn}_{\underline \kappa}\right)  \nonumber\\
&\,-\Bigr(-\overline\kappa+ (\overline \kappa-\underline \kappa) (1+\frac\pi2)+\sqrt 2 \overline \kappa \left(\frac{\overline \kappa-\underline \kappa}{\underline \kappa}\right)^2\left(\frac{4{\overline \kappa}}{ \pi\sqrt{\underline \kappa}}+1\right)^2\Bigl )\textup{tn}_{\underline \kappa}(\tfrac{d(x,y)}{2}) \\
   &\,-2\textup{tn}_{\underline \kappa}(\tfrac{d(x,y)}{2})\left(-\tfrac{2}{\underline \kappa}(\inf\Delta \kappa)_{-}+\left(\tfrac{1}{\underline \kappa^2}+\tfrac{3\pi^2\sqrt8 }{4\overline\kappa\underline \kappa}\right)|\nabla \kappa|^2_\infty+\tfrac{4\pi }{\underline\kappa}|\nabla \kappa|_\infty\right),\nonumber
         \end{align*}
         which is non-negative by assumption.
  Next, we estimate the second inequality of \eqref{inequalities-from-proposition}, using $\phi=\psi +\textup{tn}_{\underline \kappa}.$ We obtain that 
     \begin{align}\label{alpha-terms-in-proof}
         &2 \overline \kappa-2\phi'(\tfrac{d(x,y)}{2})+\textup{tn}_{\underline\kappa}(\tfrac{d(x,y)}{2})\left(2\mathcal C(x,y)+4\phi(\tfrac{d(x,y)}{2})\right)\\
         &\geq 2(\overline \kappa-\underline \kappa)-2\psi'+4\textup{tn}_{\underline \kappa}\psi+2\textup{tn}_{\underline \kappa}^2-2\textup{tn}_{\underline\kappa}\textup{tn}_{\overline \kappa}\nonumber
     \end{align}
     which is positive by assumption. We conclude that the first two inequalities of \eqref{inequalities-from-proposition} hold true. Thus, it remains only to verify the last inequality. To see this, note that 
     \begin{align*}
         \phi'(0)-\tfrac{\underline \kappa}{2}=\psi'(0)+\tfrac{\underline \kappa}{2}<0,
     \end{align*}
     by assumption. 
     From Proposition \ref{Propositiongeneralmoc}, we infer that 
     \begin{align*}
F(x,y)=2\psi(\tfrac{d(x,y)}{2})+2\textup{tn}_{\underline \kappa}(\tfrac{d(x,y)}{2})+\mathcal C(x,y)
     \end{align*}
     is a modulus of concavity for $w=\log u.$ Using Lemma \ref{TwopointGaussiancomparison}, we infer the claim.
  \end{proof}
  \begin{rem}
       Theorem \ref{MOC-thm} holds true under the weaker assumption that for any $x,y\in \Omega$
      \begin{align*}
          4\overline \kappa\textup{tn}_{\underline \kappa}(\tfrac{d(x,y)}{2})+\Bigr(3\overline\kappa+\textup{tn}_{\overline\kappa}^2(\tfrac {d}{2})+ \textup{tn}_{\underline \kappa}(\tfrac{d}{2})\mathcal C(x,y)\Bigl ) \mathcal C(x,y)
   -\frac{\varepsilon^2(x,y)}{8\textup{tn}_{\underline \kappa}(\tfrac{d}{2})}+\mathcal D(x,y)>0.
      \end{align*}
  \end{rem}

  \section{The perturbed Euclidean model}\label{perturbed-euclidean-model-section}

  We now apply Theorem \ref{MOC-thm} to find an explicit modulus of concavity for $w=\log u.$  
  Note that when $\overline \kappa=\underline \kappa,$ we recover Theorem 3.6 in \cite{10.4310/jdg/1559786428}. Namely, following \cite{Dai-Seto-Wei2021}, one obtains that $\psi(s)=(\log\phi_{e})'=\tfrac{\pi}{2D}\tan(\tfrac{2\pi}Ds)$ is a modulus of concavity, where $\phi_{e}$ is the first eigenfunction of the \textit{Euclidean model}
  \begin{align*}
      \phi''=-\lambda\phi \quad \textup{in }[-D/2,D/2]\quad \textup{with }\phi(-D/2)=\phi(D/2)=0.
  \end{align*}

  However, since we are interested in the case where the curvature is not constant, it is necessary to use a different model. 
  Indeed, we consider the \textit{perturbed Euclidean model} 
\begin{align}\label{perturbed-euclidean-model}
 \overline \phi''-4 (\textup{tn}_{\overline \kappa}-\textup{tn}_{\underline \kappa})\overline \phi'=-\overline \lambda \phi \quad \textup{in }[-{D}/2,{D}/2]\quad \textup{ with }\, \overline \phi(-\tfrac {D}2)=\overline \phi(\tfrac {D}2)=0.   
  \end{align}
 
  We now show that $\psi =(\log \overline \phi_{D})' $ satisfies the differential inequalities of Theorem \ref{MOC-thm} where $\overline \phi_{D}$ denotes the first eigenfunction of the problem \eqref{perturbed-euclidean-model}. We divide the proof in several lemma.

We first show that the last two conditions of Theorem \ref{MOC-thm} are satisfied.
\begin{lem}\label{strictlydecreasing}
Suppose that $\overline \phi_{D}$ is the first eigenfunction of the perturbed Euclidean model \eqref{perturbed-euclidean-model} on $[-{D}/2,{D}/2]$
  with Dirichlet boundary conditions. Choosing $\overline \phi_{D}$ to be positive, we find that $\overline\phi_{D}$ and $\psi=(\log \overline \phi_{D})'$ are strictly decreasing on $(0,{D}/2).$
\end{lem}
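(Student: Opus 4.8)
The plan is to argue directly with the linear ODE in \eqref{perturbed-euclidean-model}, exploiting that the drift coefficient $b(s):=-4\big(\textup{tn}_{\overline\kappa}(s)-\textup{tn}_{\underline\kappa}(s)\big)$ is odd and satisfies $b'(s)\le 0$ on $[0,D/2)$. First I would recast \eqref{perturbed-euclidean-model} in self-adjoint form: multiplying by the even, positive weight $\rho(s)=\exp\!\big(\int_0^s b(t)\,dt\big)$ converts it to $-(\rho\,\overline\phi')'=\overline\lambda\,\rho\,\overline\phi$ with Dirichlet data, a regular Sturm--Liouville problem (regular because $D/2\le\tfrac{\pi}{4\sqrt{\overline\kappa}}<\tfrac{\pi}{2\sqrt{\overline\kappa}}$ keeps all coefficients smooth on $[-D/2,D/2]$). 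Hence $\overline\lambda=\overline\lambda_1>0$, the first eigenvalue is simple, and $\overline\phi_D$ may be taken strictly positive on $(-D/2,D/2)$. Since $\rho$ is even the problem is invariant under $s\mapsto -s$, so by simplicity $\overline\phi_D(-\cdot)=c\,\overline\phi_D$ with $c=\pm1$; the case $c=-1$ is ruled out by $\overline\phi_D(0)>0$, so $\overline\phi_D$ is even and $\overline\phi_D'(0)=0$.

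Next I would prove $\overline\phi_D'<0$ on $(0,D/2)$. Writing $v=\overline\phi_D'$, the equation at $s=0$ gives $v'(0)=\overline\phi_D''(0)=-b(0)v(0)-\overline\lambda\,\overline\phi_D(0)=-\overline\lambda\,\overline\phi_D(0)<0$, so $v<0$ just to the right of $0$. If $v$ vanished somewhere in $(0,D/2)$, let $s_0$ be the first such point; then $v<0$ on $(0,s_0)$ and $v(s_0)=0$, forcing $v'(s_0)\ge 0$, whereas the equation at $s_0$ gives $v'(s_0)=-b(s_0)v(s_0)-\overline\lambda\,\overline\phi_D(s_0)=-\overline\lambda\,\overline\phi_D(s_0)<0$ since $\overline\phi_D(s_0)>0$ --- a contradiction. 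Hence $\overline\phi_D$ is strictly decreasing on $(0,D/2)$, and in particular $\psi=\overline\phi_D'/\overline\phi_D<0$ there.

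For $\psi$, I would differentiate $\psi=\overline\phi_D'/\overline\phi_D$ and substitute \eqref{perturbed-euclidean-model} to obtain the Riccati relation $\psi'=-\psi^2-b\psi-\overline\lambda$; differentiating once more, $g:=\psi'$ satisfies $g'+(2\psi+b)g=-b'\psi$. The sign of $b'$ is the crux: since $\textup{tn}_K'=K+\textup{tn}_K^2$ and, for $s\in[0,D/2)$, both $K$ and $\textup{tn}_K(s)\ge 0$ are non-decreasing in $K$ (the latter being the standard comparison behind Lemma~\ref{TwopointGaussiancomparison}), we get $\textup{tn}_{\overline\kappa}'\ge\textup{tn}_{\underline\kappa}'$, hence $b'\le 0$ on $[0,D/2)$. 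Because $\psi(0)=0$ and $\psi<0$ on $(0,D/2)$, the forcing term $-b'\psi$ is $\le 0$ on $[0,D/2)$, so with the integrating factor $\mu(s)=\exp\!\big(\int_0^s(2\psi+b)\big)>0$ the product $\mu g$ is non-increasing on $[0,D/2)$. Since $\mu(0)g(0)=g(0)=\psi'(0)=-\overline\lambda<0$, it follows that $g=\psi'<0$ on all of $[0,D/2)$, i.e.\ $\psi$ is strictly decreasing on $(0,D/2)$.

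This is mostly routine ODE analysis, and I expect the only delicate points to be bookkeeping: verifying the Sturm--Liouville regularity at the endpoints and the parity of $\overline\phi_D$, and pinning down the sign $b'\le 0$, which is exactly what keeps the forcing term $-b'\psi$ favorably signed in the $g$-equation. When $\overline\kappa=\underline\kappa$ one has $b\equiv 0$, so $g$ solves the homogeneous linear equation $g'=-2\psi g$ and cannot vanish, which recovers the classical $\tan$-profile of \cite{Dai-Seto-Wei2021}.
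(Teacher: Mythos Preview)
Your proof is correct. The treatment of evenness (via Sturm--Liouville simplicity plus the symmetry $s\mapsto -s$) and of $\overline\phi_D'<0$ on $(0,D/2)$ (via a first-zero contradiction) is essentially the paper's argument in different dress: the paper writes the equation in integrating-factor form $\big(\overline\phi_D'\,e^{-4\int_0^s(\textup{tn}_{\overline\kappa}-\textup{tn}_{\underline\kappa})}\big)'=-\overline\lambda_1\,e^{-4\int_0^s(\textup{tn}_{\overline\kappa}-\textup{tn}_{\underline\kappa})}\overline\phi_D<0$ and integrates from $0$.

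The one genuine difference is how you handle $\psi'<0$. You derive the Riccati relation $\psi'=-\psi^2-b\psi-\overline\lambda$, then differentiate again to get a linear first-order equation for $g=\psi'$ with forcing $-b'\psi$, and close by showing $b'\le 0$. This is valid, but the paper simply stops at the Riccati relation and reads off the sign: on $(0,D/2)$ one has $b(s)=-4(\textup{tn}_{\overline\kappa}-\textup{tn}_{\underline\kappa})(s)\le 0$ and $\psi<0$ (from the previous step), so each of the three terms $-\psi^2$, $-b\psi$, $-\overline\lambda$ is non-positive and the last is strictly negative, giving $\psi'<0$ immediately. In other words, the sign of $b$ itself already suffices; your appeal to the sign of $b'$ is an unnecessary (though correct) detour.
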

\begin{proof}
Note that the the first eigenfunction does not switch sign (by the nodal domain theorem) so can be chosen to be positive.  Moreover, since any solution $\phi$ can be replaced by $\widehat\phi(s):=\phi(-s)+\phi(s),$ we find that $\overline \phi_{D}$ is even.
  We now show that $\overline \phi_{D}$ is strictly decreasing on $(0,{D}/2].$ Using an integrating factor, one finds that 
\begin{align*}
    \left( \overline \phi_{D}'\exp(-4\int_0^x (\textup{tn}_{\overline \kappa}-\textup{tn}_{\underline \kappa})\, ds)\right)'=-\overline \lambda _1\exp(-4\int_0^x (\textup{tn}_{\overline \kappa}-\textup{tn}_{\underline \kappa})\, ds)\overline \phi_{D}<0.
\end{align*}
Since $\overline \phi_{D}$ is an even function, we find that $\overline \phi_{D}'(0)=0$ and thus integrating this equation yields the first claim. To show that $\psi $ is strictly decreasing, we calculate \begin{align*}
\psi'=\left(\frac{\phi'}{\phi}\right)' &=\frac{\phi''\phi-(\phi')^2}{\phi^2}\\
    &=4 (\textup{tn}_{\overline \kappa}-\textup{tn}_{\underline \kappa})\psi-\overline{ \lambda_1}-\psi^2 <0.
\end{align*}
\end{proof}
We are now concerned with the first conditions from Theorem \ref{MOC-thm}.
\begin{lem}
     We have the inequality
    \begin{align*}
         0\geq  \psi ''+2\psi \psi'-2\textup
{tn}_{\underline \kappa}\left( \psi '+\psi^2+\overline \lambda \right)-2\psi\left(\overline{\kappa}-\underline\kappa\right)-4 \psi' \left(\textup{tn}_{\overline \kappa}-\textup{tn}_{\underline \kappa}\right)
    \end{align*}
    in $[0,{D}/2).$
\end{lem}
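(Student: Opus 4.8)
The plan is to reduce the claimed differential inequality to the Riccati-type identity for $\psi$ that was established inside the proof of Lemma~\ref{strictlydecreasing}, namely
\[
\psi' = 4(\textup{tn}_{\overline \kappa}-\textup{tn}_{\underline \kappa})\psi-\overline \lambda-\psi^2,
\]
together with the sign information $\psi\le 0$ (and $\psi'\le 0$) on $[0,D/2)$ coming from the same lemma. Abbreviating $g:=\textup{tn}_{\overline \kappa}-\textup{tn}_{\underline \kappa}$, this identity says on the one hand that $\psi'+\psi^2+\overline\lambda=4g\psi$, and on the other hand, after differentiating and using $\textup{tn}_K'=K+\textup{tn}_K^2$, that $\psi''=4g'\psi+4g\psi'-2\psi\psi'$ with $g'=(\overline\kappa-\underline\kappa)+(\textup{tn}_{\overline\kappa}^2-\textup{tn}_{\underline\kappa}^2)$.

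First I would substitute this expression for $\psi''$ into the right-hand side of the asserted inequality. The combination $\psi''+2\psi\psi'$ becomes $4g'\psi+4g\psi'$, and the term $4g\psi'$ exactly cancels the summand $-4\psi'(\textup{tn}_{\overline\kappa}-\textup{tn}_{\underline\kappa})=-4g\psi'$ already present. What remains is $4g'\psi-2\textup{tn}_{\underline\kappa}(\psi'+\psi^2+\overline\lambda)-2\psi(\overline\kappa-\underline\kappa)$. Next I would use the identity $\psi'+\psi^2+\overline\lambda=4g\psi$ once more to rewrite this as $2\psi\big(2g'-4\textup{tn}_{\underline\kappa}\,g-(\overline\kappa-\underline\kappa)\big)$.

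It then remains to show the bracket is nonnegative, since $\psi\le 0$ forces the whole expression to be $\le 0$, which is the claim. Plugging in $g'$ and expanding,
\[
2g'-4\textup{tn}_{\underline\kappa}\,g-(\overline\kappa-\underline\kappa)=(\overline\kappa-\underline\kappa)+2\,\textup{tn}_{\overline\kappa}^2+2\,\textup{tn}_{\underline\kappa}^2-4\,\textup{tn}_{\underline\kappa}\textup{tn}_{\overline\kappa}=(\overline\kappa-\underline\kappa)+2(\textup{tn}_{\overline\kappa}-\textup{tn}_{\underline\kappa})^2\ge 0,
\]
using only $\overline\kappa\ge\underline\kappa$. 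This completes the proof. I do not expect any genuine obstacle here: the argument is a short algebraic manipulation, and the only point requiring care is to invoke the Riccati identity in both of its forms — differentiated, and solved for $\psi'+\psi^2+\overline\lambda$ — while keeping track of the sign of $\psi$; all of the delicate input (existence, positivity, evenness and monotonicity of $\overline\phi_D$ and of $\psi=(\log\overline\phi_D)'$) has already been supplied by Lemma~\ref{strictlydecreasing}.
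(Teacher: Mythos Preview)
Your proof is correct and follows essentially the same approach as the paper: both use the Riccati identity $\psi'+\psi^2+\overline\lambda=4(\textup{tn}_{\overline\kappa}-\textup{tn}_{\underline\kappa})\psi$ together with its differentiated form to reduce the claimed inequality to $2\psi\big[(\overline\kappa-\underline\kappa)+2(\textup{tn}_{\overline\kappa}-\textup{tn}_{\underline\kappa})^2\big]\le 0$, which holds since $\psi\le 0$. Your write-up is in fact more explicit than the paper's, which records only the reduced expression and the final positivity check.
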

\begin{proof}
     By Lemma \ref{strictlydecreasing},  $\psi$ is negative. As such, combining this with the first inequality of Theorem \ref{MOC-thm}, we find that
\begin{align*}
    4\psi \left(\overline \kappa-\underline \kappa+\textup{tn}_{\overline \kappa}^2-\textup{tn}_{\underline \kappa}^2\right)-8 \textup{tn}_{\underline \kappa}\psi (\textup{tn}_{\overline \kappa}-\textup{tn}_{\underline \kappa})-2\psi\left(\overline{\kappa}-\underline\kappa\right)\leq 0
\end{align*}
if and only if 
\begin{align*}
    2(\overline \kappa-\underline \kappa)+4\textup{tn}_{\overline \kappa}^2+4\textup{tn}_{\underline \kappa}^2-8 \textup{tn}_{\underline \kappa} \textup{tn}_{\overline \kappa}\geq 0.
\end{align*}
The latter of which is true since $\overline \kappa\geq \underline \kappa.$ 
\end{proof}
We now show the second condition.
\begin{lem}  For ${D}<\tfrac{\pi}{2\sqrt{\overline \kappa}}$ and $  \overline \kappa\leq \tfrac{8+4\pi}{3+4\pi}\underline \kappa,$ one has 
\begin{align*}
     2(\overline \kappa-\underline \kappa)-2\psi'+4\textup{tn}_{\underline \kappa}\psi+2\textup{tn}_{\underline \kappa}^2-2\textup{tn}_{\underline\kappa}\textup{tn}_{\overline \kappa}\geq 0.
\end{align*}    
\end{lem}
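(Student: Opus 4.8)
The plan is to eliminate $\psi'$ by means of the Riccati-type identity already obtained in the proof of Lemma~\ref{strictlydecreasing}, and then reduce everything to a one-variable estimate. Recall from that proof that $\overline\phi_D$ is even (so $\overline\phi_D'(0)=0$ and $\psi(0)=0$), that $\psi=(\log\overline\phi_D)'\le 0$ on $[0,D/2)$, and that
\[
\psi' = 4\bigl(\textup{tn}_{\overline\kappa}-\textup{tn}_{\underline\kappa}\bigr)\psi-\overline\lambda-\psi^2 \qquad\text{on }[0,D/2).
\]
Substituting this into the left-hand side of the claimed inequality (all functions evaluated at the same $s=\tfrac{d(x,y)}{2}\in[0,D/2)$) and collecting the $\psi$-linear terms turns the quantity to be bounded below into
\[
2(\overline\kappa-\underline\kappa)+2\overline\lambda+2\psi^2+\bigl(12\,\textup{tn}_{\underline\kappa}-8\,\textup{tn}_{\overline\kappa}\bigr)\psi+2\,\textup{tn}_{\underline\kappa}^2-2\,\textup{tn}_{\underline\kappa}\textup{tn}_{\overline\kappa}.
\]

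Next I would remove $\psi$ altogether. Viewing $2\psi^2+(12\,\textup{tn}_{\underline\kappa}-8\,\textup{tn}_{\overline\kappa})\psi$ as a quadratic in $\psi$ on the half-line $\psi\le 0$: if $3\,\textup{tn}_{\underline\kappa}\le 2\,\textup{tn}_{\overline\kappa}$ the linear coefficient is $\le 0$ and the quadratic is $\ge 0$ there, so it may be discarded; if $3\,\textup{tn}_{\underline\kappa}>2\,\textup{tn}_{\overline\kappa}$ the vertex $\psi=2\,\textup{tn}_{\overline\kappa}-3\,\textup{tn}_{\underline\kappa}$ is still $\le 0$, so the quadratic is $\ge -2\bigl(3\,\textup{tn}_{\underline\kappa}-2\,\textup{tn}_{\overline\kappa}\bigr)^2$. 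In either case it suffices to prove
\[
2(\overline\kappa-\underline\kappa)+2\overline\lambda-2\bigl(3\,\textup{tn}_{\underline\kappa}-2\,\textup{tn}_{\overline\kappa}\bigr)^2+2\,\textup{tn}_{\underline\kappa}^2-2\,\textup{tn}_{\underline\kappa}\textup{tn}_{\overline\kappa}\ge 0 ,
\]
and expanding the square the three $\textup{tn}$-terms combine to $-16\,\textup{tn}_{\underline\kappa}^2+22\,\textup{tn}_{\underline\kappa}\textup{tn}_{\overline\kappa}-8\,\textup{tn}_{\overline\kappa}^2$.

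Then I would estimate the remaining $\textup{tn}$-expression using only the diameter bound. For $s\le D/2$ one has $\sqrt{\overline\kappa}\,s<\pi/4$, hence $0\le\textup{tn}_{\underline\kappa}(s)\le\textup{tn}_{\overline\kappa}(s)$, $\textup{tn}_{\underline\kappa}(s)^2<\underline\kappa$ and $\textup{tn}_{\overline\kappa}(s)^2<\overline\kappa$; writing $t=\textup{tn}_{\underline\kappa}(s)/\textup{tn}_{\overline\kappa}(s)\in(0,1]$ the expression equals $\textup{tn}_{\overline\kappa}(s)^2\bigl(-16t^2+22t-8\bigr)$, and the scalar function $-16t^2+22t-8$ is bounded below on $(0,1]$ (indeed by $-2$ on the subrange $t\ge 2/3$, which is the one relevant in the second case above). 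Combining this with a lower bound of the form $\overline\lambda>\overline\kappa$ — obtained by putting \eqref{perturbed-euclidean-model} in self-adjoint form $(\rho\,\overline\phi')'=-\overline\lambda\,\rho\,\overline\phi$ with weight $\rho=(\textup{cs}_{\overline\kappa}/\textup{cs}_{\underline\kappa})^4$, noting $\tfrac14<\rho\le 1$ on $[-D/2,D/2]$, so that $\overline\lambda\ge(\min\rho)\,\pi^2 D^{-2}>\tfrac14\cdot 4\overline\kappa$ by the diameter hypothesis — reduces the whole problem to a linear inequality between $\overline\kappa$ and $\underline\kappa$. This is the point at which the pinching hypothesis $\overline\kappa\le\tfrac{8+4\pi}{3+4\pi}\underline\kappa$ is invoked; it also, via monotonicity of $s\mapsto\textup{tn}_{\overline\kappa}(s)/\textup{tn}_{\underline\kappa}(s)$, controls which branch of the dichotomy occurs.

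The step I expect to be the main obstacle is the lower bound on $\overline\lambda$: because of the first-order drift term in \eqref{perturbed-euclidean-model} one does \emph{not} have $\overline\lambda\ge\pi^2/D^2$ (a test-function computation gives the reverse), so one must pass to the weighted Sturm--Liouville formulation and keep track of the explicit weight $\rho$ in order to extract a clean bound comparing $\overline\lambda$ with $\overline\kappa$. Everything after that — the square completion on the half-line $\psi\le 0$ and the elementary estimates for $\textup{tn}_{\underline\kappa},\textup{tn}_{\overline\kappa}$ on $[0,D/2]$ — is routine bookkeeping.
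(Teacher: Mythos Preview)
Your approach is correct and differs from the paper's in two places. After the common first step (substituting the Riccati identity for $\psi'$), the paper completes the square as $2\psi^2+4\psi\,\textup{tn}_{\underline\kappa}=2(\psi+\textup{tn}_{\underline\kappa})^2-2\,\textup{tn}_{\underline\kappa}^2$, drops this square together with the manifestly nonnegative term $-8(\textup{tn}_{\overline\kappa}-\textup{tn}_{\underline\kappa})\psi$, and is left with $2(\overline\kappa-\underline\kappa)+2\overline\lambda_1-\tfrac12\textup{tn}_{\overline\kappa}^2-2\textup{tn}_{\underline\kappa}^2$; you instead minimize the full quadratic in $\psi$ over the half-line $\psi\le 0$. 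For the eigenvalue, the paper converts \eqref{perturbed-euclidean-model} to a Schr\"odinger equation and obtains the sharper bound $\overline\lambda_1\ge\pi^2/D^2-(4+\pi)(\overline\kappa-\underline\kappa)$, whereas you use the weighted Sturm--Liouville form with $\rho=(\textup{cs}_{\overline\kappa}/\textup{cs}_{\underline\kappa})^4$ (noting $\max\rho=1$) to get only $\overline\lambda_1>\overline\kappa$. Your weaker eigenvalue bound is compensated by the tighter square-completion: if you actually carry out the final ``routine bookkeeping,'' you will find that in both branches the resulting linear inequality in $\overline\kappa,\underline\kappa$ is satisfied \emph{without} the pinching hypothesis $\overline\kappa\le\tfrac{8+4\pi}{3+4\pi}\underline\kappa$ (e.g.\ in your second case the requirement collapses to $\overline\lambda_1\ge\underline\kappa$), so your claim that ``this is where the pinching is invoked'' is in fact unnecessary for this lemma. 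The trade-off is that the paper's Schr\"odinger bound on $\overline\lambda_1$ is reused verbatim in the final gap estimate \eqref{lambda_1-estimate}, while your Sturm--Liouville bound is too weak for that purpose.
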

\begin{proof} Note that 
\begin{align*}
    &  2(\overline \kappa-\underline \kappa)-2\psi'+4\textup{tn}_{\underline \kappa}\psi+2\textup{tn}_{\underline \kappa}^2-2\textup{tn}_{\underline\kappa}\textup{tn}_{\overline \kappa}\\
     =&2(\overline \kappa-\underline \kappa)+2\overline \lambda_1-8 (\textup{tn}_{\overline \kappa}-\textup{tn}_{\underline \kappa})\psi+2\psi^2+4\psi\textup{tn}_{\underline \kappa}+(\textup{tn}_{\underline \kappa}-\textup{tn}_{\overline \kappa})^2-\tfrac{1}{2}{\textup{tn}_{\overline \kappa}^2}.
\end{align*}
Using the identity, $2\psi^2+4\psi\textup{tn}_{\underline \kappa}=2\left(\psi+\textup{tn}_{\underline \kappa}\right)^2-2\textup{tn}_{\underline \kappa}^2$, we find the estimate 
 \begin{align}\label{alpha-terms-ineq}
     &2(\overline \kappa-\underline \kappa)-2\psi'+4\textup{tn}_{\underline \kappa}\psi+2\textup{tn}_{\underline \kappa}^2-2\textup{tn}_{\underline\kappa}\textup{tn}_{\overline \kappa}\\
     &\geq 2(\overline \kappa-\underline \kappa)+2\overline \lambda_1-\tfrac{1}{2}\textup{tn}_{\overline \kappa}^2-2\textup{tn}_{\underline \kappa}^2.\nonumber
 \end{align}
The latter is positive by our diameter and curvature pinching assumption. Now we can bound the first eigenvalue of
\eqref{perturbed-euclidean-model} from below by converting it into a Schr\"odinger equation with Dirichlet boundary condition:
\begin{align*}
  -\varphi''+  \Bigl(4(\textup{tn}_{\overline \kappa}-\textup{tn}_{\underline \kappa})^2-2(\textup{tn}^2_{\overline \kappa}-\textup{tn}^2_{\underline \kappa}+\overline \kappa-\underline \kappa)\Bigr)\varphi=\lambda \varphi.
\end{align*}
From this we get 
\begin{align}\label{lambda_1-estimate}
   \overline  \lambda_1 &=\inf_{u\in H^1_0(- {D}/2,  {D}/2)}\frac{\int_{- {D}/2}^{ {D}/2}(u')^2+\Bigl(4(\textup{tn}_{\overline \kappa}-\textup{tn}_{\underline \kappa})^2-2(\textup{tn}^2_{\overline \kappa}-\textup{tn}^2_{\underline \kappa}+\overline \kappa-\underline \kappa)\Bigr)u^2\, ds }{\int_{-{D}/2}^{ {D}/2}u^2\,ds}\\
   &\geq  \frac{\pi^2}{ {D}^2}-(4+\pi)(\overline \kappa-\underline \kappa) \nonumber
\end{align} 
where we used \eqref{tn_k-square-terms}.
Thus, \eqref{alpha-terms-ineq} can be estimated as follows 
\begin{align}\label{second-ineq}
   &2(\overline \kappa-\underline \kappa)+2\overline \lambda_1-\tfrac{1}{2}\textup{tn}_{\overline \kappa}^2-2\textup{tn}_{\underline \kappa}^2\nonumber\\
   &\geq -(6+2\pi)(\overline \kappa-\underline \kappa)+8\overline \kappa-\tfrac 12 \overline \kappa-2\underline \kappa,
\end{align}
where we used ${D}< \pi /(2\sqrt{\overline \kappa})$. Note that \eqref{second-ineq} is nonnegative whenever
\begin{align*}
  \overline \kappa\leq \frac{8+4\pi}{3+4\pi}\underline \kappa,
\end{align*}
which is our pinch assumption.
\end{proof}
\begin{lem}
    For ${D}\leq \pi/(2\sqrt{\overline \kappa})$ and $\overline \kappa<\tfrac{7+2\pi}{2\pi}\underline \kappa,$
    \begin{align*}
        \psi'(0)+\frac{\underline \kappa}{2}<0.
    \end{align*}
\end{lem}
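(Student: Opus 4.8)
The plan is to reduce the claimed inequality to the eigenvalue lower bound already established in \eqref{lambda_1-estimate}. The starting point is the Riccati-type identity derived in the proof of Lemma~\ref{strictlydecreasing}, namely
\[
\psi' = 4(\textup{tn}_{\overline \kappa}-\textup{tn}_{\underline \kappa})\psi - \overline \lambda_1 - \psi^2 .
\]
I would evaluate this at $s=0$. Since $\textup{tn}_K(0)=0$ for every $K$, and since $\overline \phi_D$ is even (so $\overline \phi_D'(0)=0$ and hence $\psi(0)=0$), this collapses to $\psi'(0) = -\overline \lambda_1$. Consequently $\psi'(0)+\tfrac{\underline \kappa}{2}<0$ is equivalent to the single scalar inequality $\overline \lambda_1 > \tfrac{\underline \kappa}{2}$.

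Next I would invoke the eigenvalue estimate \eqref{lambda_1-estimate}, namely $\overline \lambda_1 \geq \tfrac{\pi^2}{D^2} - (4+\pi)(\overline \kappa-\underline \kappa)$, and combine it with the diameter hypothesis $D\leq \pi/(2\sqrt{\overline \kappa})$, which gives $\tfrac{\pi^2}{D^2}\geq 4\overline \kappa$. This yields $\overline \lambda_1 \geq 4\overline \kappa - (4+\pi)(\overline \kappa-\underline \kappa)$.

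Finally, an elementary rearrangement shows that $4\overline \kappa - (4+\pi)(\overline \kappa-\underline \kappa) > \tfrac{\underline \kappa}{2}$ if and only if $(\tfrac72+\pi)\underline \kappa > \pi\overline \kappa$, i.e.\ $\overline \kappa < \tfrac{7+2\pi}{2\pi}\underline \kappa$, which is exactly the standing hypothesis. Chaining the two inequalities then gives $\psi'(0)+\tfrac{\underline \kappa}{2} \leq -\overline \lambda_1 + \tfrac{\underline\kappa}{2} < 0$, as desired.

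I do not anticipate a genuine obstacle here: the argument is a short computation. The only step that requires a little care is correctly reading off $\psi'(0)=-\overline \lambda_1$ from the Riccati identity (using evenness of $\overline \phi_D$), after which the conclusion follows purely from the already-proved bound on $\overline \lambda_1$ and the curvature-pinching and diameter assumptions.
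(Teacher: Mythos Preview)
Your proposal is correct and follows essentially the same argument as the paper: both observe $\psi'(0)=-\overline\lambda_1$, invoke the bound \eqref{lambda_1-estimate} together with $D\le \pi/(2\sqrt{\overline\kappa})$ to get $\psi'(0)+\tfrac{\underline\kappa}{2}\le -4\overline\kappa+(4+\pi)(\overline\kappa-\underline\kappa)+\tfrac{\underline\kappa}{2}$, and then verify this is negative precisely under the pinching hypothesis $\overline\kappa<\tfrac{7+2\pi}{2\pi}\underline\kappa$. Your version simply spells out the Riccati-identity justification for $\psi'(0)=-\overline\lambda_1$ a bit more explicitly.
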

\begin{proof}
    Observe that $\psi'(0)=-\overline \lambda_1\leq -\tfrac{\pi^2}{{D}^2}+(4+\pi)(\overline \kappa-\underline\kappa) $ by \eqref{lambda_1-estimate}. From the diameter restriction ${D}\leq \pi /(2\sqrt{\overline \kappa
    }),$ one finds that 
    \begin{align*}
        \psi'(0)+\frac{\underline \kappa}{2}\leq -4\overline \kappa +(4+\pi)(\overline \kappa-\underline \kappa)+\frac{\underline \kappa}{2}<0
\end{align*}
by our pinch assumption.
\end{proof}
We thus arrived at the following
\begin{thm}\label{MOC-thm-2}
      Suppose the $(M^2,g)$ is a complete manifold of positive sectional curvature $\kappa$, such that $0<\underline \kappa\leq \kappa\leq \overline \kappa< \tfrac{8+4\pi}{3+4\pi}\underline \kappa$ which satisfies \eqref{curvature-assumption-in-moc-thm}. 
Suppose that $\Omega \subset M^2$ is a geodesically convex domain  such that $\textup{diam}(\Omega)=D<\tfrac{\pi}{2\sqrt{\overline \kappa}}$. Then for any $x,y\in \Omega$, 
 \begin{align*}
     \langle \nabla w(y), \gamma_{x,y}'(\tfrac{d}{2})\rangle - \langle \nabla w(x), \gamma_{x,y}'(\tfrac{-d}{2})\rangle \leq 2(\log \overline \phi_D)'(\tfrac{d}{2})+\textup{tn}_{\underline \kappa}(\tfrac{d}{2}),
 \end{align*}
 where $w=\log u$, the log of the first Dirichlet eigenfunction on $\Omega$ and $\overline \phi_D$ is the first eigenfunction of the perturbed Euclidean model \eqref{perturbed-euclidean-model}.
  \end{thm}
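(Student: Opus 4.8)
The plan is to feed a single test profile into Theorem~\ref{MOC-thm}, namely $\psi=(\log\overline\phi_D)'$ where $\overline\phi_D$ is the positive first eigenfunction of the perturbed Euclidean model~\eqref{perturbed-euclidean-model}. Once one checks that this $\psi$ satisfies the five displayed conditions of Theorem~\ref{MOC-thm}, that theorem produces, for this $\psi$, exactly
\[
\langle \nabla w(y),\gamma_{x,y}'(\tfrac d2)\rangle-\langle \nabla w(x),\gamma_{x,y}'(-\tfrac d2)\rangle\le 2\psi(\tfrac d2)+\textup{tn}_{\underline\kappa}(\tfrac d2)=2(\log\overline\phi_D)'(\tfrac d2)+\textup{tn}_{\underline\kappa}(\tfrac d2),
\]
which is the asserted estimate. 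So the entire proof is the verification of those five conditions, which has essentially been carried out by the lemmas of this section; Theorem~\ref{MOC-thm-2} is the assembly step.

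Concretely, Lemma~\ref{strictlydecreasing} shows $\overline\phi_D$ is even and that $\overline\phi_D$ and $\psi$ are strictly decreasing on $(0,D/2)$, which immediately supplies the conditions $\psi(0)=0$ (because $\overline\phi_D'(0)=0$) and $\psi'\le0$; its proof also records the Riccati identity $\psi'=4(\textup{tn}_{\overline\kappa}-\textup{tn}_{\underline\kappa})\psi-\overline\lambda_1-\psi^2$, on which the other three rest. Differentiating this identity, substituting, and using $\psi\le0$ reduces the first ODE condition to the elementary inequality $2(\overline\kappa-\underline\kappa)+4(\textup{tn}_{\overline\kappa}-\textup{tn}_{\underline\kappa})^2\ge0$. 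For the second condition one completes the square $2\psi^2+4\psi\,\textup{tn}_{\underline\kappa}=2(\psi+\textup{tn}_{\underline\kappa})^2-2\textup{tn}_{\underline\kappa}^2$, inserts the eigenvalue lower bound~\eqref{lambda_1-estimate}, and uses $D<\pi/(2\sqrt{\overline\kappa})$; this reduces it to the curvature pinch $\overline\kappa\le\tfrac{8+4\pi}{3+4\pi}\underline\kappa$. The third condition reads $\psi'(0)+\tfrac{\underline\kappa}2=-\overline\lambda_1+\tfrac{\underline\kappa}2<0$, which follows from the same bound and the weaker pinch $\overline\kappa<\tfrac{7+2\pi}{2\pi}\underline\kappa$ — implied here since $\tfrac{8+4\pi}{3+4\pi}<\tfrac{7+2\pi}{2\pi}$. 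Thus the hypothesis $\overline\kappa<\tfrac{8+4\pi}{3+4\pi}\underline\kappa$ is exactly what these lemmas consume, \eqref{curvature-assumption-in-moc-thm} is what Theorem~\ref{MOC-thm} itself requires, and $D<\pi/(2\sqrt{\overline\kappa})$ meets its diameter restriction.

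The one point that needs a word of care — rather than real work — is that the first ODE condition in Theorem~\ref{MOC-thm} carries the domain eigenvalue $\lambda=\lambda_1(\Omega)$, whereas the lemmas verify it with the model eigenvalue $\overline\lambda_1$. Since $D<\pi/(2\sqrt{\overline\kappa})\le\pi/(2\sqrt{\underline\kappa})$ makes $\textup{tn}_{\underline\kappa}(\tfrac d2)\ge0$ on $[0,D/2]$, the right-hand side of that condition is nonincreasing in $\lambda$, so the case $\lambda=\lambda_1(\Omega)$ follows from the case $\lambda=\overline\lambda_1$ provided $\lambda_1(\Omega)\ge\overline\lambda_1$ — a routine Dirichlet eigenvalue comparison. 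I do not expect any genuine obstacle at this stage: essentially all of the difficulty of the paper has already been absorbed into the derivative estimates for $\mathcal C$ (Proposition~\ref{CancellationProposition}, Lemma~\ref{e_2-derivative-of-C}, Lemma~\ref{estiamtesonderivative}) and into the maximum-principle machinery behind Theorems~\ref{Propositiongeneralmoc} and~\ref{MOC-thm} (the continuity argument removing the provisional log-concavity assumption living in the proof of the former). What the present theorem adds is only the recognition that $(\log\overline\phi_D)'$ is the correct profile to push through that machinery, together with the bookkeeping above.
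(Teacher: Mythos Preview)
Your overall strategy is correct and matches the paper's: feed $\psi=(\log\overline\phi_D)'$ into Theorem~\ref{MOC-thm} and verify its five hypotheses via the lemmas of Section~\ref{perturbed-euclidean-model-section}. However, you have overlooked a regularity issue that the paper's proof is written precisely to address. Since $\overline\phi_D(D/2)=0$, the profile $\psi=(\log\overline\phi_D)'$ blows up at $s=D/2$; it is not even bounded on $[0,D/2]$, let alone $C^2$. But Theorem~\ref{MOC-thm}, through Theorem~\ref{Propositiongeneralmoc}, requires the test function to be $C^2$ on the \emph{closed} interval $[0,D/2]$, and Lemma~\ref{Boundary asymptotics bounded MoC} needs $F$ continuous on $\overline\Omega\times\overline\Omega$ to rule out boundary maxima of $Z$. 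With $\psi_D$ singular at $D/2$, the corresponding $F$ is unbounded near pairs $(x,y)$ realizing the diameter, and those pairs necessarily involve boundary points where $\nabla w$ is itself singular; the competing infinities make the maximum principle argument break down.

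The paper's remedy is a short limiting trick: choose $D'>D$ with $D'<\tfrac{\pi}{2\sqrt{\overline\kappa}}$, let $\overline\phi_{D'}$ solve the perturbed Euclidean model on $[-D'/2,D'/2]$, and set $\psi_{D'}=(\log\overline\phi_{D'})'$. Because $D/2<D'/2$, this $\psi_{D'}$ is genuinely $C^2$ on $[0,D/2]$, and all the section's lemmas apply verbatim with $D'$ in place of $D$. Theorem~\ref{MOC-thm} then yields the estimate with $\psi_{D'}$, and one sends $D'\to D^+$ to recover the stated inequality. You should insert this step; without it the invocation of Theorem~\ref{MOC-thm} is not legitimate. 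Apart from this, your assembly of the section's lemmas is faithful to the paper. (Your aside about $\lambda$ versus $\overline\lambda_1$ is a reasonable observation; the paper does not comment on it explicitly either.)
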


\begin{proof}[Proof of Theorem \ref{MOC-thm-2}]
   We let ${D'}>D$ such that ${D'}<\tfrac{\pi}{2\sqrt{\overline\kappa}}.$ Consider $\psi_{D'} =(\log \overline \phi_{D'})',$ where $ \overline \phi_{D'}$ is the solution of the problem 
   \begin{align*}
 \overline \phi''-4 (\textup{tn}_{\overline \kappa}-\textup{tn}_{\underline \kappa})\overline \phi'=-\overline \lambda \phi \quad \textup{in }[-{D'}/2,{D'}/2]\quad \textup{ with }\, \overline \phi(-\tfrac {D'}2)=\overline \phi(\tfrac {D'}2)=0. 
  \end{align*}
    
   Note also that since ${D'}<\tfrac{\pi}{2\sqrt{\overline\kappa}},$ all the previous lemmas apply to $\psi_{D'}=(\log \overline \phi_{D'})'$ and hence the assumptions of Theorem \ref{MOC-thm} are satisfied (since $\psi_{D'}$ is $C^2$ in $[0,D/2]$). 
   
   As a result, we find that for any $x,y\in \Omega$
   \begin{align*}
       \langle \nabla w(y),\gamma_{x,y}'(\tfrac{d}{2})\rangle - \langle \nabla w(x),\gamma_{x,y}'(\tfrac{d}{2})\rangle \leq 2(\log \overline \phi_{D'})' (\tfrac{d(x,y)}{2})+\textup{tn}_{\underline \kappa}(\tfrac{d(x,y)}{2}).
   \end{align*}
   Then letting ${D'}\rightarrow D$ gives the claim.
\end{proof}

\section{Fundamental Gap Comparison}\label{gapcomparison-section}
In this section, we compare the fundamental gap to the perturbed Euclidean model \eqref{perturbed-euclidean-model} in order to obtain the desired lower bound. However, previously established gap comparison theorems 
do not apply directly in this case, so some modifications are needed.

Since $\textup{tn}_{\overline \kappa}-\textup{tn}_{\underline \kappa} \ge 0$, we can apply a Riccati comparison theorem (see, e.g., \cite{Eschenburg1990}).
\begin{prop}
     Suppose we have two functions  $\psi_1,\psi_2\leq 0$ which satisfy $\psi_1(0) = \psi_2(0)=0$ and
\begin{eqnarray*}
       \psi_1'+\psi_1^2+\lambda-4(\textup{tn}_{\overline \kappa}-\textup{tn}_{\underline \kappa})\psi_1 &=&0, \\
         \psi_2'+\psi_2^2+\lambda &=&0.
\end{eqnarray*}
Then we have 
\begin{align*}
    \psi _1 \leq \psi_2.
\end{align*}
\end{prop}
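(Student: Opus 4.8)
The plan is to examine the difference $v := \psi_1 - \psi_2$, show it solves a linear first-order ODE with a non-positive forcing term and vanishing initial value, and conclude $v \le 0$ by an integrating-factor (Gr\"onwall) argument.

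Subtracting the two equations gives
\begin{align*}
    v' = \psi_1' - \psi_2' = -\left(\psi_1^2 - \psi_2^2\right) + 4\left(\textup{tn}_{\overline \kappa} - \textup{tn}_{\underline \kappa}\right)\psi_1 = -\left(\psi_1 + \psi_2\right) v + 4\left(\textup{tn}_{\overline \kappa} - \textup{tn}_{\underline \kappa}\right)\psi_1,
\end{align*}
so with $a := \psi_1 + \psi_2$ and $f := 4\left(\textup{tn}_{\overline \kappa} - \textup{tn}_{\underline \kappa}\right)\psi_1$ we have $v' + a v = f$ and $v(0) = \psi_1(0) - \psi_2(0) = 0$. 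The key sign observation is that $f \le 0$: on $[0, \tfrac{\pi}{2\sqrt{\overline \kappa}})$ both $\textup{tn}_{\overline \kappa}$ and $\textup{tn}_{\underline \kappa}$ are nonnegative and $\textup{tn}_K$ is increasing in $K$ there, so $\textup{tn}_{\overline \kappa} - \textup{tn}_{\underline \kappa} \ge 0$ because $\overline \kappa \ge \underline \kappa$; combined with the hypothesis $\psi_1 \le 0$ this gives $f \le 0$.

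Finally, I would introduce the integrating factor $\mu(s) := \exp\!\left(\int_0^s a(t)\,dt\right) > 0$, so that $(\mu v)' = \mu f \le 0$. Since $\mu(0)v(0) = 0$, integrating from $0$ to $s$ yields $\mu(s) v(s) \le 0$, hence $v(s) \le 0$, i.e. $\psi_1 \le \psi_2$. The argument is routine; the only points needing care are that it should be carried out on an interval where $\psi_1$ and $\psi_2$ are both defined and smooth (e.g. where the first eigenfunctions of the two models are positive, so that their logarithmic derivatives exist), and the observation that it is precisely the hypothesis $\psi_1 \le 0$, together with $\overline \kappa \ge \underline \kappa$, that makes the forcing term non-positive and hence forces the comparison in the stated direction.
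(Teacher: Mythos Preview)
Your argument is correct and is precisely the standard Riccati comparison that the paper invokes by citing \cite{Eschenburg1990}: the paper does not write out a proof but simply observes that $\textup{tn}_{\overline\kappa}-\textup{tn}_{\underline\kappa}\ge 0$ and appeals to the general Riccati comparison theorem, of which your integrating-factor computation is the direct verification in this instance.
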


Applying this to $\psi _1=(\log \overline \phi_1)$ and $\psi_2=(\log \tilde \phi)',$ where $\tilde \phi$ satisfies  \begin{align*}
    \phi''=-\lambda \phi \quad \textup{on }[-L/2,L/2]\quad \textup{ with }\quad \phi(-L/2)=\phi(L/2)=0,
\end{align*}
where $L>0$ is chosen such that $ \tfrac{\pi^2}{L^2}=\overline \lambda_1.$ From this, we obtain the our final log-concavity estimate.
\begin{align}\label{final-superlogconcavity-estimate}
    \langle \nabla w (y),\gamma'(\tfrac{d}{2})\rangle - \langle \nabla w (x),\gamma'(\tfrac{-d}{2})\rangle\leq -\frac{\pi}{L}\tan\left(\frac{\pi}{L}{d(x,y)}\right)+\textup{tn}_{\underline \kappa}\left(\frac{d(x,y)}{2}\right),
\end{align}
where $L=\tfrac{\pi}{\sqrt{\overline \lambda_1}},$ and $\overline \lambda_1$ denotes the first eigenvalue of the perturbed Euclidean model. 

Using this estimate, we can compare the fundamental gap of our domain to that of a one-dimensional interval which is slightly longer, where the length of the interval depends on the pinch. More precisely, we prove that 
\begin{align*}
    \Gamma(\Omega)\geq \frac{3\pi^2}{L^2}.
\end{align*}

In order to establish this, we appeal to a previously known gap comparison theorem.
\begin{thm}[Theorem 4.6 \cite{10.4310/jdg/1559786428}]\label{parabolicgapcomparison}
Let $\Omega \subset M^n$ be convex and suppose that $\textup{Ric}_M\geq (n-1)K.$
Let $v=v(x,t)$ be a solution to the equation 
 \begin{align*}
 \begin{cases}
     \partial _t v= \Delta v+2 \langle \nabla v, X\rangle\\
     \partial _\nu v=0\quad \textup{on }\partial \Omega
     \end{cases}
 \end{align*}
Suppose that $X$ is a time dependent vector field such that
\begin{align*}
    \langle X,\gamma'(\tfrac{d}{2})\rangle - \langle X,\gamma'(\tfrac{-d}{2})\rangle \leq 2w(\tfrac{d}{2})+(n-1)\textup{tn}_K(\tfrac{d}{2}).
\end{align*}
Moreover, assume that $\varphi:[0,D/2)\times \mathbb R_+\rightarrow \mathbb R$ satisfies \begin{itemize}
    \item[$i)$] $\varphi(\cdot, 0)$ is a modulus of continuity of $v(\cdot, 0),$ i.e. 
    \begin{align*}
        |v(y,0)-v(x,0)|\leq 2\varphi(\tfrac d2,0),
    \end{align*}
    \item[$ii)$] $\varphi $ satisfies 
    \begin{align*}
        \frac{\partial \varphi}{\partial t}\geq \varphi ''+2w\varphi'
    \end{align*}
    \item[$iii)$] $\varphi'>0 $ in $[0,D/2)\times \mathbb R_+$\\
    \item[$iv)$] $\varphi(0,t)\geq 0$ for all $t\geq 0.$
\end{itemize}
Then we have that 
\begin{align*}
    |v(y,t)-v(x,t)|\leq 2 \varphi(\tfrac{d}{2},t)\quad \textup{for all }t\geq 0.
\end{align*}
  \end{thm}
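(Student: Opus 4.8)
The plan is to prove the estimate by the two-point maximum principle of Andrews--Clutterbuck \cite{andrews2011proof} (this is \cite[Theorem 4.6]{10.4310/jdg/1559786428}, and I outline the argument). Because $\varphi$ sees $x$ and $y$ only through $d(x,y)$, it suffices to show that
\[
Z(x,y,t):=v(y,t)-v(x,t)-2\varphi\Big(\tfrac{d(x,y)}{2},t\Big)\le 0\qquad\text{for all } x,y\in\overline\Omega,\ t\ge 0,
\]
since interchanging $x$ and $y$ then yields the two-sided bound. To dispose of equality cases I work with $Z_\varepsilon:=Z-\varepsilon e^{t}$, prove $Z_\varepsilon\le 0$ for each $\varepsilon>0$, and let $\varepsilon\to 0$. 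By the diameter hypothesis there are no cut points among points of $\Omega$, so $d$ is smooth near any pair of distinct interior points and $d(x,y)<D$ for such pairs (prolonging the minimizing geodesic past an interior endpoint would otherwise give a pair in $\overline\Omega$ at distance exceeding $\mathrm{diam}(\Omega)$); hence $Z_\varepsilon$ is smooth there and $\varphi$ is evaluated inside $[0,D/2)$.

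Assume $Z_\varepsilon>0$ somewhere. Since $Z_\varepsilon(\cdot,\cdot,0)\le-\varepsilon<0$ by (i) and $Z_\varepsilon$ is continuous, there is a first time $t_0>0$ with $\max_{\overline\Omega\times\overline\Omega}Z_\varepsilon(\cdot,\cdot,t_0)=0$, attained at some $(x_0,y_0)$. On the diagonal $Z_\varepsilon(x,x,t)=-2\varphi(0,t)-\varepsilon e^{t}<0$ by (iv), so $x_0\ne y_0$. If $x_0\in\partial\Omega$, then at the maximum the outward normal derivative in $x$ satisfies $\partial_{\nu_x}Z_\varepsilon\ge 0$; but $\partial_{\nu_x}v(x_0)=0$ by the Neumann condition, while convexity of $\Omega$ forces the minimizing geodesic from $x_0$ toward $y_0$ into $\Omega$, so $\partial_{\nu_x}d(x_0,y_0)=\langle-\gamma'_{x_0,y_0}(-\tfrac{d}{2}),\nu_x\rangle>0$, and since $\varphi'>0$ by (iii) we get $\partial_{\nu_x}Z_\varepsilon=-\varphi'\,\partial_{\nu_x}d(x_0,y_0)<0$, a contradiction; the case $y_0\in\partial\Omega$ is symmetric. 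Thus $(x_0,y_0)$ is interior with $x_0\ne y_0$.

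At $(x_0,y_0,t_0)$ let $\gamma=\gamma_{x_0,y_0}$ with $\gamma(\pm\tfrac{d_0}{2})=x_0,y_0$, $d_0=d(x_0,y_0)$, and pick a parallel orthonormal frame $\{e_i\}_{i=1}^n$ along $\gamma$ with $e_n=\gamma'$. The first-order conditions $\nabla_xZ_\varepsilon=\nabla_yZ_\varepsilon=0$ give $\nabla v(x_0)=\varphi'(\tfrac{d_0}{2},t_0)\gamma'(-\tfrac{d_0}{2})$ and $\nabla v(y_0)=\varphi'(\tfrac{d_0}{2},t_0)\gamma'(\tfrac{d_0}{2})$. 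Testing $\nabla^2Z_\varepsilon\le 0$ against $E_i=e_i\oplus e_i$ ($1\le i\le n-1$) and $E_n=e_n\oplus(-e_n)$ and summing: the transverse variations do not move $d$ to first order, the geodesic direction contributes $-2\varphi''(\tfrac{d_0}{2},t_0)$, and the only curvature input is $\sum_{i<n}\nabla^2_{E_i,E_i}d=\sum_{i<n}I(J_i,J_i)$ (second variation of distance, cf.\ Lemma \ref{C-is-derivative-of-distance-lemma}), where $J_i$ is the Jacobi field along $\gamma$ with $J_i(\pm\tfrac{d_0}{2})=e_i(\pm\tfrac{d_0}{2})$; the index lemma and $\Ric_M\ge(n-1)K$ (the Hessian/index comparison theorem, see \cite{schoen1994lectures,andrewsclutterbuckmoc}, cf.\ Lemma \ref{TwopointGaussiancomparison}) give $\sum_{i<n}I(J_i,J_i)\le-2(n-1)\textup{tn}_K(\tfrac{d_0}{2})$. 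Using $\sum_i\nabla^2_{e_i,e_i}v=\Delta v$ at each endpoint, the second-order inequality reads
\[
\Delta v(y_0)-\Delta v(x_0)\ \le\ 2\varphi''\Big(\tfrac{d_0}{2},t_0\Big)-2(n-1)\,\varphi'\Big(\tfrac{d_0}{2},t_0\Big)\textup{tn}_K\Big(\tfrac{d_0}{2}\Big).
\]
On the other hand $\partial_tZ_\varepsilon\ge 0$ at $(x_0,y_0,t_0)$; substituting $\partial_tv=\Delta v+2\langle\nabla v,X\rangle$, the identities for $\nabla v(x_0),\nabla v(y_0)$, the hypothesis on $X$, and $\varphi'>0$ yields
\[
\Delta v(y_0)-\Delta v(x_0)\ \ge\ 2\partial_t\varphi\Big(\tfrac{d_0}{2},t_0\Big)+\varepsilon e^{t_0}-4w\Big(\tfrac{d_0}{2}\Big)\varphi'\Big(\tfrac{d_0}{2},t_0\Big)-2(n-1)\,\varphi'\Big(\tfrac{d_0}{2},t_0\Big)\textup{tn}_K\Big(\tfrac{d_0}{2}\Big).
\]
Combining the two displays, the $\textup{tn}_K$ terms cancel and we obtain $\partial_t\varphi+\tfrac{\varepsilon}{2}e^{t_0}\le\varphi''+2w\varphi'$ at $(\tfrac{d_0}{2},t_0)$, contradicting (ii). Hence $Z_\varepsilon\le 0$ for all $\varepsilon>0$, and $\varepsilon\to 0$ finishes the proof.

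I expect the main obstacle to be the two-point Hessian comparison $\sum_{i<n}\nabla^2_{E_i,E_i}d\le-2(n-1)\textup{tn}_K(\tfrac{d_0}{2})$: it rests on the minimizing property of the index form among fields with prescribed endpoint values, which needs the absence of conjugate points along $\gamma$ (ensured by the diameter bound), and on reducing the second variation of arc length to that of energy, valid because the variation fields are perpendicular to $\gamma'$ -- precisely the content of Lemmas \ref{C-is-derivative-of-distance-lemma} and \ref{TwopointGaussiancomparison} in dimension two, the $n$-dimensional version being classical. The remaining care lies in the boundary exclusion, handled above via the Neumann condition and convexity of $\Omega$, together with the routine bookkeeping that interior maxima occur at pairs with $d_0<D$.
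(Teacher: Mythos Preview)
The paper does not prove this theorem; it is quoted verbatim as Theorem 4.6 of \cite{10.4310/jdg/1559786428} and used as a black box in Section \ref{gapcomparison-section}. Your outline is the standard two-point maximum-principle argument from that reference (following \cite{andrews2011proof}): the $\varepsilon e^{t}$-perturbation, the exclusion of the diagonal and spatial boundary, the first-order identification $\nabla v=\varphi'\gamma'$, the second-order test against $E_i$ together with the Hessian/index comparison $\sum_{i<n}\nabla^2_{E_i,E_i}d\le -2(n-1)\textup{tn}_K(\tfrac{d_0}{2})$, and the contradiction with (ii) are all carried out correctly and in the right order.

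Two small points you may wish to tighten. First, the boundary exclusion: mere convexity only gives $\langle\gamma'(-\tfrac{d_0}{2}),\nu_x\rangle\le 0$, not the strict inequality you use; in \cite{10.4310/jdg/1559786428} this is handled by assuming strict convexity (as the present paper also does throughout), so either state that hypothesis or note that the non-strict case can be removed by approximation. Second, your sentence ``by the diameter hypothesis there are no cut points'' is not part of the stated theorem; in \cite{10.4310/jdg/1559786428} the setting is $\mathbb S^n$ with an explicit diameter bound guaranteeing this, so you should either add that standing assumption or invoke convexity of $\Omega$ to ensure unique minimizing geodesics between interior points.
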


With this, we are finally able to give a proof of the main theorem.

\begin{thm}
      Suppose that $(M^2,g)$ is a complete manifold of positive sectional curvature $\kappa$ such that $0<\underline \kappa\leq \kappa\leq \overline \kappa \leq \tfrac{8+4\pi}{3+4\pi} \underline \kappa$ which satisfies \eqref{curvature-assumption-in-moc-thm}. 
Then for any geodesically convex $\Omega \subset M^2$ such that $\textup{diam}(\Omega)=D<\tfrac{\pi}{2\sqrt{\overline \kappa}}$ \begin{align*}
    \Gamma\geq \frac{3\pi^2}{D^2}-(12+3\pi)(\overline \kappa-\underline \kappa).
\end{align*}
  \end{thm}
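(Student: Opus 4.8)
The plan is to derive the gap estimate by feeding the super log-concavity estimate of Theorem~\ref{MOC-thm-2} into the gap comparison Theorem~\ref{parabolicgapcomparison}, and then to bound the resulting one-dimensional gap using the eigenvalue estimate \eqref{lambda_1-estimate}. First I would rewrite the log-concavity estimate in a convenient form. By Theorem~\ref{MOC-thm-2}, under the stated curvature pinching and curvature-derivative hypotheses, $2(\log\overline\phi_D)'(\tfrac d2)+\textup{tn}_{\underline\kappa}(\tfrac d2)$ is a modulus of concavity for $w=\log u_1$ on $\Omega$. Applying the Riccati comparison proposition to $\psi_1=(\log\overline\phi_D)'$ and $\psi_2=(\log\widetilde\phi)'$, where $\widetilde\phi$ is the first Dirichlet eigenfunction of $\phi''=-\lambda\phi$ on an interval of length $L=\pi/\sqrt{\overline\lambda_1}$ and $\overline\lambda_1$ is the first eigenvalue of the perturbed Euclidean model \eqref{perturbed-euclidean-model}, yields \eqref{final-superlogconcavity-estimate}: $w$ obeys the super log-concavity estimate associated with the one-dimensional zero-potential model whose first eigenvalue is $\overline\lambda_1$. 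The point of passing to this model is that $L>D$ (indeed $L=D+O(\overline\kappa-\underline\kappa)$), so the right-hand side of \eqref{final-superlogconcavity-estimate} stays finite for every $x,y\in\Omega$ and nothing degenerates as $d(x,y)\to D$.

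Next I would run the gap comparison. Let $u_2$ be a second Dirichlet eigenfunction and put $v=u_2/u_1$; since $u_1$ and $u_2$ vanish to first order on $\partial\Omega$ with nonvanishing normal derivative, $v$ extends to a bounded Lipschitz (in fact $C^2$) function on $\overline\Omega$ solving $\Delta v+2\langle\nabla v,\nabla w\rangle=-(\lambda_2-\lambda_1)v$ with $\partial_\nu v=0$, and $\int_\Omega v\,u_1^2=0$. I would apply Theorem~\ref{parabolicgapcomparison} with $n=2$, $K=\underline\kappa$ (so $\textup{Ric}_M=\kappa g\ge\underline\kappa g$), drift field $X=\nabla w$, and one-dimensional profile equal to the log-derivative of the first eigenfunction of the zero-potential model of eigenvalue $\overline\lambda_1$; estimate \eqref{final-superlogconcavity-estimate} is precisely the hypothesis $\langle X,\gamma'(\tfrac d2)\rangle-\langle X,\gamma'(-\tfrac d2)\rangle\le 2w(\tfrac d2)+\textup{tn}_{\underline\kappa}(\tfrac d2)$ of that theorem. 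For $\varphi$ I would take $\varphi(s,t)=C\,e^{-\mu t}\varphi_0(s)$, where $\mu=3\overline\lambda_1$ is the gap of the one-dimensional model, $\varphi_0$ is the odd, strictly increasing profile built from the ratio of its first two eigenfunctions, and $C>0$ is chosen so that $\varphi(\cdot,0)$ is a modulus of continuity of $v$; the conditions $i)$--$iv)$ then hold because $\varphi_0$ solves the one-dimensional drift equation with eigenvalue $\mu$ and satisfies $\varphi_0(0)=0$, $\varphi_0'>0$. Evolving $v$ by the drift heat flow gives $V(x,t)=e^{-(\lambda_2-\lambda_1)t}v(x)$, so Theorem~\ref{parabolicgapcomparison} yields $|v(y)-v(x)|\le 2C\,e^{((\lambda_2-\lambda_1)-\mu)t}\varphi_0(\tfrac{d(x,y)}{2})$ for all $t\ge 0$; letting $t\to\infty$ and using that $v$ is non-constant forces $\Gamma(\Omega)=\lambda_2-\lambda_1\ge\mu=3\overline\lambda_1=\tfrac{3\pi^2}{L^2}$.

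Finally I would conclude with the eigenvalue estimate. Since $D<\pi/(2\sqrt{\overline\kappa})$ and $\overline\kappa\le\tfrac{8+4\pi}{3+4\pi}\underline\kappa$, estimate \eqref{lambda_1-estimate} applies and gives $\overline\lambda_1\ge\tfrac{\pi^2}{D^2}-(4+\pi)(\overline\kappa-\underline\kappa)$, whence $\Gamma(\Omega)\ge 3\overline\lambda_1\ge\tfrac{3\pi^2}{D^2}-(12+3\pi)(\overline\kappa-\underline\kappa)$, which is the claimed bound. The substantial analytic work has already been carried out in the modulus-of-concavity sections and packaged into Theorem~\ref{MOC-thm-2}; within the present proof the delicate point is to verify that the hypotheses of Theorem~\ref{parabolicgapcomparison} are genuinely met — in particular that the Riccati comparison onto the enlarged interval $L>D$ keeps the tangent profile finite on all of $\Omega$, that $v=u_2/u_1$ is a legitimate bounded Neumann solution of the drift equation, and that the one-dimensional profile $\varphi$ satisfies the parabolic inequality $ii)$ and the sign conditions $iii)$--$iv)$.
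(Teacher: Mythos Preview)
Your proposal is correct and follows essentially the same route as the paper: invoke Theorem~\ref{MOC-thm-2}, pass via the Riccati comparison to the Euclidean model on the enlarged interval of length $L=\pi/\sqrt{\overline\lambda_1}$ to obtain \eqref{final-superlogconcavity-estimate}, then apply Theorem~\ref{parabolicgapcomparison} with $X=\nabla\log u_1$, $v=e^{-\Gamma t}u_2/u_1$ and the separated profile $\varphi(s,t)=Ce^{-3\pi^2 t/L^2}\varphi_0(s)$ to deduce $\Gamma(\Omega)\ge 3\pi^2/L^2=3\overline\lambda_1$, and finish with \eqref{lambda_1-estimate}. Your description of $\varphi_0$ as the odd increasing ratio of the first two one-dimensional eigenfunctions is exactly what is needed (the paper writes this explicitly as a quotient of cosines, which should be read as $\sin(2\pi s/L)/\cos(\pi s/L)=2\sin(\pi s/L)$).
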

\begin{proof}
    For $C>0$ large enough, a straightforward computation shows that 
    \begin{eqnarray*}
        \varphi(s,t)=Ce^{-\tfrac{3\pi^2}{L^2}t}\frac{\cos(\frac{2\pi s }{L})}{\cos(\frac{\pi s }{L})} &\quad & s\in  \left ( -\frac{L}{2}, \frac{L}{2} \right ) \\
        v(x,t)=e^{-\Gamma(\Omega)t}\frac{u_2(x)}{u_1(x)} & \quad & x \in \Omega
     \end{eqnarray*}
     satisfy the assumption of Theorem \ref{parabolicgapcomparison}, together with $X=\nabla \log u_1$. As a result, for all $t>0$ we have the estimate
     \begin{align*}
       e^{-\Gamma(\Omega)t}\left(\frac{u_2(y)}{u_1(y)}-\frac{u_2(x)}{u_1(x)}\right)\leq Ce^{-\tfrac{3\pi^2}{L^2}t}\frac{\cos(\frac{d(x,y)\pi  }{L})}{\cos(\frac{d (x,y) \pi}{2L})}
    \end{align*}
    This implies that $\Gamma(\Omega)\geq \frac{3\pi^2}{L^2}.$ Since $\lambda_1=\pi^2/L^2,$ and by  \eqref{lambda_1-estimate}, we conclude 
    \begin{align*}
        \Gamma(\Omega) \geq \frac{3\pi^2}{D^2}-(12+3\pi)(\overline \kappa-\underline \kappa).
    \end{align*}
\end{proof}
  \section{Proof of Proposition \ref{CancellationProposition} and Lemma~\ref{e_2-derivative-of-C}}\label{derivativesection}
  
  In order to prove Proposition~\ref{CancellationProposition}, we begin by computing the derivatives of the variation $\eta$, and then the derivatives of $\mathcal C$. 



\subsection{The derivatives of the variation $\eta$}


Throughout this section, we freely use the variations $\eta_i$ defined by Equation \eqref{Definition_of_eta}. We will also use the fact that the Jacobi fields $J^{1,0}, J^{0,1}$ and $J =J^{1,0} + J^{0,1} $ only have an $e_1$ component, and so we also denote their coefficients with the same notation.
\begin{lem} \label{First half of cancellation} We have the following derivative formulas:
       \begin{align*}
   \langle \nabla _r \nabla _r{\eta'}, e_1\rangle (0,  \tfrac{-d_0}{2})=\int_{\tfrac{-d_0}{2}}^{\tfrac{d_0}{2}}\kappa_1(t)J^{1,0}(t)(J(t))^2\, dt, 
\end{align*}

\begin{align*}
    \langle \nabla _r \nabla _r{\eta'}, e_1\rangle (0, \tfrac{d_0}{2})=-\int_{\tfrac{-d_0}{2}}^{\tfrac{d_0}{2}}\kappa_1(t)J^{0,1}(t)(J(t))^2\, dt.
\end{align*}
  Here $\kappa$ is the sectional curvature and $\kappa_1= e_1(\kappa)$.   
\end{lem}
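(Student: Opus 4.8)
The plan is to compute the two endpoint values by a Jacobi-field analysis along $\gamma := \gamma_{x_0,y_0}$. Write $T = \eta_*\partial_s$, $J = \eta_*\partial_r$, and $V := \nabla_{\partial_r}\partial_r\eta$ for the variation fields of $\eta = \eta_1$; at $r=0$ one has $T(0,\cdot) = \gamma'$ (unit speed) and $J(0,\cdot) = J(\cdot)\,e_1$, where $J$ is the scalar Jacobi field with $J(\pm\tfrac{d_0}{2}) = 1$, and we set $V_1 := \langle V(0,\cdot),e_1\rangle$. Since each $\eta(r,\cdot)$ is a constant-speed geodesic, $\nabla_s T \equiv 0$; since $[\partial_r,\partial_s]=0$ and the connection is torsion-free, $\nabla_r T = \nabla_s J$. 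Commuting covariant derivatives once more,
\begin{equation*}
  \nabla_r\nabla_r\eta' = \nabla_r\nabla_r T = \nabla_r\nabla_s J = \nabla_s(\nabla_r J) + R(J,T)J = \nabla_s V + R(J,T)J .
\end{equation*}
At $r=0$, $J\parallel e_1$ and $T\parallel e_2$, so $R(J,T)J$ is a multiple of $R(e_1,e_2)e_1$, which is proportional to $e_2$; hence its $e_1$-component vanishes, and since $e_1$ is parallel along $\gamma$,
\begin{equation*}
  \langle \nabla_r\nabla_r\eta'(0,\pm\tfrac{d_0}{2}),\,e_1\rangle = \langle \nabla_s V(0,\pm\tfrac{d_0}{2}),\,e_1\rangle = V_1'(\pm\tfrac{d_0}{2}).
\end{equation*}
So it suffices to determine $V_1$ and its endpoint derivatives.

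To that end I would differentiate the Jacobi equation $\nabla_s\nabla_s J + R(J,T)T = 0$ (valid for every $r$) in the $r$-direction. Using $\nabla_r J = V$, $\nabla_r T = \nabla_s J$, and commuting derivatives as above, one obtains
\begin{equation*}
  \nabla_s\nabla_s V + R(V,T)T = -\nabla_s\big(R(J,T)J\big) - 2R(J,T)\nabla_s J - (\nabla_J R)(J,T)T - R(J,\nabla_s J)T .
\end{equation*}
Restricting to $r=0$ and extracting the $e_1$-component: the left side is $V_1'' + \kappa V_1$ (since $R(e_1,e_2)e_2 = \kappa e_1$); on the right, the first, second and fourth terms are multiples of $e_2 = \gamma'$ and contribute nothing, while the curvature-derivative term, via $(\nabla_J R)(J,T)T = (\nabla_J\kappa)\big(|T|^2 J - \langle J,T\rangle T\big)$ and $\nabla_J\kappa = J\,\kappa_1$ at $r=0$, contributes exactly $-\kappa_1 J^2$. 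Hence $V_1$ solves the scalar inhomogeneous Jacobi problem
\begin{equation*}
  V_1'' + \kappa V_1 = -\kappa_1 J^2 \quad\text{on } \Big(-\tfrac{d_0}{2},\tfrac{d_0}{2}\Big), \qquad V_1(\pm\tfrac{d_0}{2}) = 0,
\end{equation*}
the boundary conditions holding because $r\mapsto \eta(r,\pm\tfrac{d_0}{2}) = \sigma_{1,2}(r)$ are geodesics, so $V = \nabla_{\partial_r}\partial_r\eta$ vanishes there.

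Finally I would apply Lemma~\ref{solutionofsecondorderode} in the scalar case ($n=1$, with $J^{1,0},J^{0,1}$ the $e_1$-Jacobi fields and $J = J^{1,0}+J^{0,1}$) to $M := -\kappa_1 J^2$ with zero boundary data, writing $V_1$ as the corresponding combination of the integral terms. Differentiating and evaluating at $s = -\tfrac{d_0}{2}$: the boundary terms vanish, one of the two integrals vanishes, and the surviving coefficient $(J^{0,1})'(-\tfrac{d_0}{2})\,[(J^{1,0})'(\tfrac{d_0}{2})]^{-1}$ equals $-1$ by the Wronskian identity $(J^{1,0})'(\tfrac{d_0}{2}) = -(J^{0,1})'(-\tfrac{d_0}{2})$ from Remark~\ref{n-dim-jacobi-symmetry}, leaving $V_1'(-\tfrac{d_0}{2}) = -\int_{-d_0/2}^{d_0/2} J^{1,0}M\,dt = \int_{-d_0/2}^{d_0/2}\kappa_1 J^{1,0}J^2\,dt$. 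The same computation at $s = \tfrac{d_0}{2}$ gives $V_1'(\tfrac{d_0}{2}) = \int_{-d_0/2}^{d_0/2}J^{0,1}M\,dt = -\int_{-d_0/2}^{d_0/2}\kappa_1 J^{0,1}J^2\,dt$, which combined with the reduction of the first paragraph yields the two claimed formulas.

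The step I expect to be the main obstacle is the second one: differentiating the Jacobi equation in $r$ while correctly bookkeeping all commutator and curvature-derivative terms, and verifying that at $r=0$ every contribution except $(\nabla_J R)(J,T)T$ is tangential (a multiple of $\gamma'$) so as not to disturb the $e_1$-component. One should also be careful that the $r$-curves are genuinely non-geodesic in the interior, so $V$ is not identically zero, and that the non-unit speed of $\eta(r,\cdot)$ for $r\neq 0$ plays no role since only $r=0$ enters the final evaluation.
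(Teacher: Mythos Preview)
Your proof is correct and follows essentially the same strategy as the paper: reduce $\langle \nabla_r\nabla_r\eta',e_1\rangle$ at the endpoints to the endpoint derivatives of a scalar function satisfying the inhomogeneous Jacobi equation with source $-\kappa_1 J^2$ and zero boundary data, then apply Lemma~\ref{solutionofsecondorderode} and the Wronskian identity from Remark~\ref{n-dim-jacobi-symmetry}. The only real difference is packaging: the paper builds an orthonormal frame $\{T,V\}$ along the variation, decomposes the Jacobi field as $J_V V + J_T T$, and differentiates the scalar equation $J_V'' + \kappa\|\eta'\|^2 J_V = 0$ in $r$, whereas you work directly with the second-variation field $V=\nabla_r J$, differentiate the vector Jacobi equation, and project onto $e_1$ after identifying which curvature terms are tangential. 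Both routes land on the same inhomogeneous ODE; your commutator approach is a bit more coordinate-free, while the paper's frame computation makes the vanishing of the tangential contributions slightly more explicit.
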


\begin{proof}  In order to establish these identities, we calculate $\nabla _r \nabla _r{\eta'}(0, \tfrac{d_0}{2})$. To do so, we define an orthonormal frame along $\eta(r,s)$ in the following way:

We parallel transport the vector $e_1$ from $x$ along $\sigma_1(r)$. In other words, we take $e_1 =\tfrac{\partial}{\partial r}\sigma_1(r)$. For $r$ small, $\eta'(r, -\tfrac{d_0}{2}), e_1$ will be a frame along $\sigma_1(r)$. We then define
\begin{equation} \label{TV frame defintion}
    T=\tfrac{\eta_i'(r, -\tfrac{d_0}{2})}{\|\eta_i'(r, -\tfrac{d_0}{2})\|}, \quad V(r)= \frac{\widetilde V(r)} { \|\widetilde V(r)\| },
\end{equation} where $\widetilde V(r) = e_1 - \langle T,e_1\rangle T$.  The pair $\{T, V\}$ is an orthonormal frame along $\sigma_1(r)$ with $V(0) =e_1, T(0)=e_2.$ Finally, we parallel translate $V(r)$ along $\eta(r, \cdot)$, so that $\{T, V\}$ form an orthornormal frame 
which is parallel along each geodesic $\eta(r, \cdot)$.

We now establish that the covariant derivative $ \langle \nabla_r V, e_1 \rangle$ vanish at the $x_0$ and $y_0$. 
\begin{equation}
    \langle \nabla_r V, e_1 \rangle (0, \pm \tfrac{d_0}{2})  =0. \label{V-derivative in r}
    \end{equation}

At $x_0$, this follows from the face that $\sigma_1(r)$ is a geodesic, the identity $\langle T,e_1\rangle=0 $ when $r=0$. For $y_0$, note that 
since \[\overline{V} (r) = \tfrac{\partial}{\partial r}\sigma_2(r) - \langle T(r, \tfrac{d_0}{2}), \tfrac{\partial}{\partial r}\sigma_2(r) \rangle T(r,\tfrac{d_0}{2})\] is orthogonal to $T(r, \tfrac{d_0}{2})$, so the value of $V$ along $\sigma_2(r)$ is also given by $V(r, \tfrac{d_0}{2}) =  \frac{\overline{ V}(r)} { \|\overline{ V}(r)\| }$. Then using the fact that $ \sigma_2(r)$ is a geodesic, this claim follows from the identity $\langle T,e_1\rangle=0 $ at $r=0$.


We now decompose $J$ into terms of the orthonormal frame $\{T, V\}$:
\begin{align*}
    J(r,s)=J_V(r,s)V(r,s)+J_T(r,s)T(r,s)\quad \textup{for }s\in (\tfrac{-d_0}{2},\tfrac{d_0}{2}).
\end{align*}
Doing so, we find that
\begin{align*}
    J'(r,\pm \tfrac{d_0}{2}) =  J_V'(r,\pm \tfrac{d_0}{2})V(r,\pm \tfrac{d_0}{2})+  J'_T(r,\pm \tfrac{d_0}{2}) T(r, \pm \tfrac{d_0}{2}).
\end{align*}

The tangential component of a Jacobi field is linear, so from the boundary condition we have 
 \begin{align*}
    J_T(r,s)=\frac{-1}{d_0}(s-\tfrac{d_0}{2})\Bigl\langle T(r,\tfrac{-d_0}{2}),  \frac{ \partial\sigma_1}{\partial r}(r)\Bigr\rangle+\frac{1}{d_0}(s+\tfrac{d_0}{2})\Bigl\langle T(r,\tfrac{d_0}{2}),  \frac{ \partial\sigma_2}{\partial r}(r)\Bigr\rangle,
\end{align*}
and thus
\begin{align}
J_T(0,\pm \tfrac{d_0}{2})=J_T'(0,\pm \tfrac{d_0}{2})= 0. \label{J_T-2}
\end{align}
Now we compute 
\begin{eqnarray*}
    \nabla_r\nabla_r \eta'(0,\pm \tfrac{d_0}{2})
    &=&\nabla _r\Bigl[ J'_V(r,\pm \tfrac{d_0}{2})V(r,\pm \tfrac{d_0}{2})+  J'_T(r,\pm \tfrac{d_0}{2}) T(r, \pm \tfrac{d_0}{2})\Bigr]_{r=0} \nonumber \\
    & =& \left[\tfrac{\partial}{\partial r} (J'_V) (0,\pm \tfrac{d_0}{2}) e_1+  J'_V(0,\pm \tfrac{d_0}{2})  \nabla _rV (0,\pm \tfrac{d_0}{2}) \right]   + \nabla _r J'_T(r,\pm \tfrac{d_0}{2}) e_2.
\end{eqnarray*}
Using \eqref{V-derivative in r}, we have 
\begin{equation}
 \langle \nabla _r \nabla _r{\eta'}, e_1\rangle (0, \tfrac{d_0}{2}) = \tfrac{\partial}{\partial r} ( J'_V)(0,\pm \tfrac{d_0}{2}). \label{eta-J}
\end{equation}
To compute $\frac{\partial}{\partial r} J'_V$, we differentiate the Jacobi equation 
\[
J_V'' +  J_V \langle R(V, T)T, V\rangle \|\eta'\|^2 = 0
\] with respect to $r$, which gives
\begin{equation}\label{Differentiating Jacobi fields 1}
   \tfrac{\partial}{\partial r} (J_V'') + \tfrac{\partial}{\partial r}(J_V  \kappa(r,s) \|\eta'\|^2 )= 0,  
\end{equation}
where $\kappa(r,s)$ denotes the sectional curvature at the point $\eta(r,s).$
Since the partial derivatives commute and when $r= 0$, $\frac{\partial}{\partial r} \|\eta'\|^2 = \langle J', e_2 \rangle =0$, evaluating Equation \eqref{Differentiating Jacobi fields 1} at $r=0$ gives
\begin{equation}\label{Differentiating Jacobi Fields 2}
    \left(\tfrac{\partial}{\partial r} J_V\right)''(0,s) +  (\tfrac{\partial}{\partial r}J_V) \kappa(0,s) +  J_V (\tfrac{\partial}{\partial r} \kappa) (0,s) =0.
\end{equation}

Since $\nabla_{\tfrac{\partial}{\partial r}} J (0,\pm\tfrac{d_0}{2}) =0 $, with \eqref{J_T-2} and \eqref{V-derivative in r}, we have $\tfrac{\partial}{\partial r} J_V(0,\pm\tfrac{d_0}{2}) =0$. 

As a result, this is a non-homogeneous Jacobi field and we can apply  Lemma~\ref{solutionofsecondorderode} to find the solution
\begin{equation}
\tfrac{\partial}{\partial r} J_V (0,s) = J^{1,0}(s)\int_{-\tfrac{d_0}{2}}^sJ^{0,1}(t)[(J^{1,0})'(\tfrac{d_0}{2})]^{-1}{m(t)}\, dt+J^{0,1}(s)\int_s^{\tfrac{d_0}{2}}J^{1,0}(t)[(J^{1,0})'(\tfrac{d_0}{2})]^{-1}m(t)\, dt,  \label{J-r-derivative-2}
\end{equation}
where $m(t)=-(\tfrac{\partial}{\partial r} \kappa)(0,t)J_V(0,t)=-(\tfrac{\partial}{\partial r} \kappa)(0,t)J(0,t)$ as $J_T(0,t) =0$.
Since $\tfrac{\partial}{\partial r} = J e_1$, we write $\tfrac{\partial}{\partial r} \kappa = J  e_1(\kappa) = J  \kappa_1$. 

Taking the derivative of \eqref{J-r-derivative-2} with respect to $s$ and evaluating this at the end points gives
\begin{eqnarray}
\left(\tfrac{\partial}{\partial r} J_V\right)' (0,-\tfrac{d_0}{2}) &  = &-\left(J^{0,1}\right)'(-\tfrac{d_0}{2})\int_{-\tfrac{d_0}{2}}^{\tfrac{d_0}{2}}J^{1,0}(t)[(J^{1,0})'(\tfrac{d_0}{2})]^{-1}\kappa_1(t) (J(t))^2\, dt  \nonumber \\
 & = &  \int_{-\tfrac{d_0}{2}}^{\tfrac{d_0}{2}} J^{1,0}(t) \kappa_1(t) (J(t))^2\, dt, \label{Nearly done differentiating Jacobi fields} 
 \end{eqnarray}
 \begin{eqnarray}
 \left(\tfrac{\partial}{\partial r} J_V\right)' (0,\tfrac{d_0}{2}) &  = &-\int_{-\tfrac{d_0}{2}}^{\tfrac{d_0}{2}}J^{0,1}(t) \kappa_1(t) (J(t))^2\, dt,   
\end{eqnarray}
where we used Lemma~\ref{HelpforLaplacianTerm} in the second equality. Combining Equation \ref{Nearly done differentiating Jacobi fields} with Equation \eqref{eta-J}, we find the desired result.



\end{proof}

\subsection{Normal derivatives of $\mathcal{C}$}

We now compute the $e_1$ derivative of the function $\mathcal C$ defined in \eqref{def-of-C}.
\begin{lem}
We have 
\begin{align*}
    \nabla _{0\oplus e_1}\mathcal C=-\frac{1}{2}\int_{\tfrac{-d_0}{2}}^{\tfrac{d_0}{2}}\kappa_1J^{0,1}J^2\, d t
\end{align*}
    \begin{align*}
    \nabla _{e_1\oplus 0}\mathcal C=-\frac{1}{2}\int_{\tfrac{-d_0}{2}}^{\tfrac{d_0}{2}}\kappa_1J^{1,0}J^2\, dt.
\end{align*}
\end{lem}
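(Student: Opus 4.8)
The plan is to obtain both identities by differentiating the ODE that defines $\mathcal{C}$ under a one-parameter variation moving a single endpoint in the normal direction $e_1$; I will carry out the computation for $\nabla_{0\oplus e_1}\mathcal{C}$ and deduce $\nabla_{e_1\oplus 0}\mathcal{C}$ from the symmetric argument. Fix $x_0,y_0$ with minimizing geodesic $\gamma_0=\gamma_{x_0,y_0}$ and parallel unit normal field $e_1(s)$ along it. Set $\beta(t)=\exp_{y_0}(t\,e_1(y_0))$, let $\gamma_t=\gamma_{x_0,\beta(t)}$ be the unit-speed minimizing geodesic with $\gamma_t(-\tfrac{d_t}{2})=x_0$, $\gamma_t(\tfrac{d_t}{2})=\beta(t)$, $d_t=d(x_0,\beta(t))$, and let $J_t$ solve $J_t''+\kappa(\gamma_t)\,J_t=0$ with $J_t(\pm\tfrac{d_t}{2})=1$. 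By \eqref{def-of-C}, $\mathcal{C}(x_0,\beta(t))=\tfrac12\bigl(J_t'(\tfrac{d_t}{2})-J_t'(-\tfrac{d_t}{2})\bigr)$, so $\nabla_{0\oplus e_1}\mathcal{C}$ is the $t$-derivative of this at $t=0$.

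Two reductions make this tractable. First, by the first variation of arclength, $\tfrac{d}{dt}d_t\big|_{t=0}=\langle\gamma_0'(\tfrac{d_0}{2}),\beta'(0)\rangle=\langle\gamma_0'(\tfrac{d_0}{2}),e_1(y_0)\rangle=0$, so the domain $[-\tfrac{d_t}{2},\tfrac{d_t}{2}]$ is stationary at $t=0$ and $\tfrac{d}{dt}\big|_0 J_t'(\pm\tfrac{d_t}{2})=K'(\pm\tfrac{d_0}{2})$, where $K:=\partial_t J_t\big|_{t=0}$. Second, the variation field $Y(s):=\partial_t\gamma_t(s)\big|_{t=0}$ is a Jacobi field along $\gamma_0$ with $Y(-\tfrac{d_0}{2})=0$ and $Y(\tfrac{d_0}{2})=e_1(y_0)$; since the tangential part of a Jacobi field is affine in $s$ and both boundary values are normal to $\gamma_0'$, $Y$ is purely normal, hence $Y(s)=J^{0,1}(s)\,e_1(s)$. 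Therefore $\partial_t\bigl(\kappa(\gamma_t)\bigr)\big|_{t=0}=d\kappa(Y)=\kappa_1 J^{0,1}$, and differentiating $J_t''+\kappa(\gamma_t)J_t=0$ in $t$ yields the inhomogeneous Jacobi equation
\[
K''+\kappa\,K=-\kappa_1 J^{0,1}J,\qquad K(\pm\tfrac{d_0}{2})=0,
\]
where I used $J_0=J=J^{1,0}+J^{0,1}$ and that the boundary values $J_t(\pm\tfrac{d_t}{2})=1$ are constant in $t$.

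Now I apply Lemma~\ref{solutionofsecondorderode} with $M(t)=-\kappa_1(t)J^{0,1}(t)J(t)$ and vanishing boundary data, differentiate in $s$, and evaluate at $s=\pm\tfrac{d_0}{2}$. Using $J^{1,0}(\tfrac{d_0}{2})=J^{0,1}(-\tfrac{d_0}{2})=0$, $J^{1,0}(-\tfrac{d_0}{2})=J^{0,1}(\tfrac{d_0}{2})=1$, and the symmetry $(J^{0,1})'(-\tfrac{d_0}{2})=-(J^{1,0})'(\tfrac{d_0}{2})$ from Remark~\ref{n-dim-jacobi-symmetry}, the factors $[(J^{1,0})'(\tfrac{d_0}{2})]^{-1}$ cancel and one finds $K'(\tfrac{d_0}{2})=-\int_{-d_0/2}^{d_0/2}\kappa_1 (J^{0,1})^2 J\,dt$ and $K'(-\tfrac{d_0}{2})=\int_{-d_0/2}^{d_0/2}\kappa_1 J^{1,0}J^{0,1}J\,dt$. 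Hence $\nabla_{0\oplus e_1}\mathcal{C}=\tfrac12\bigl(K'(\tfrac{d_0}{2})-K'(-\tfrac{d_0}{2})\bigr)=-\tfrac12\int_{-d_0/2}^{d_0/2}\kappa_1 J^{0,1}J\,(J^{0,1}+J^{1,0})\,dt=-\tfrac12\int_{-d_0/2}^{d_0/2}\kappa_1 J^{0,1}J^2\,dt$, invoking $J=J^{1,0}+J^{0,1}$ once more. Replacing $\beta$ by $\exp_{x_0}(t\,e_1(x_0))$ changes the variation field to $J^{1,0}(s)e_1(s)$, so $J^{0,1}$ is replaced by $J^{1,0}$ in $M$, giving $\nabla_{e_1\oplus 0}\mathcal{C}=-\tfrac12\int_{-d_0/2}^{d_0/2}\kappa_1 J^{1,0}J^2\,dt$.

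The main obstacle, and the only part requiring real care, is the setup: one must verify that $\tfrac{d}{dt}d_t$ vanishes at $t=0$ so that differentiating $J_t'(\pm\tfrac{d_t}{2})$ produces no boundary-acceleration terms, correctly identify $\partial_t\gamma_t\big|_0$ as the purely normal Jacobi field $J^{0,1}e_1$ (with no tangential component), and track the inverse factors $[(J^{1,0})'(\tfrac{d_0}{2})]^{-1}$ through Lemma~\ref{solutionofsecondorderode}, where the cancellation relies on the symmetry of Remark~\ref{n-dim-jacobi-symmetry}. The remaining endpoint evaluations and the final use of $J=J^{1,0}+J^{0,1}$ are routine.
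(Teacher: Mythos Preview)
Your proof is correct and follows essentially the same route as the paper: vary one endpoint along the geodesic with initial velocity $e_1$, use the vanishing of the first variation of distance to reduce to differentiating $J_t'(\pm\tfrac{d_0}{2})$ at fixed $s$, derive the inhomogeneous Jacobi equation $K''+\kappa K=-\kappa_1 J^{0,1}J$ with zero boundary data, solve via Lemma~\ref{solutionofsecondorderode}, and evaluate at the endpoints using the symmetry $(J^{0,1})'(-\tfrac{d_0}{2})=-(J^{1,0})'(\tfrac{d_0}{2})$ together with $J=J^{1,0}+J^{0,1}$. The only cosmetic difference is that the paper writes $\kappa_\rho=\kappa_1 J^{0,1}$ and combines the two endpoint contributions in a single line, whereas you compute $K'(\tfrac{d_0}{2})$ and $K'(-\tfrac{d_0}{2})$ separately before summing.
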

This combines with the previous lemma to prove \eqref{equality1inkeyprop}. 
\begin{proof}
We will focus on $\nabla _{0\oplus e_1}C,$ but the  computation for $\nabla _{e_1\oplus 0}C$ is nearly identical. Let $\sigma(\rho)$ be a curve such that $\sigma(0)=y,$ $\sigma'(0)=e_1(\tfrac d2)$ (to indicate a different variation we use $\rho$ instead of $r$ as before).
Doing so, we have that
\begin{align*}
   \nabla_{0\oplus e_1}\mathcal C(x,y)=\frac{d}{d\rho}_{|_{\rho=0}}\mathcal C(x,\sigma(\rho)). 
\end{align*}
Now, for each $\rho,$ 
we have 
\begin{align*}
\mathcal C(x,\sigma(\rho))=\frac{1}{2}\left(J'(\rho,\tfrac{d(x,\sigma(\rho))}{2})-J'(\rho,-\tfrac{d(x,\sigma(\rho))}{2})\right),
\end{align*}
where $J(\rho,s)$ solves
\begin{align*}
    \begin{cases}
    J''(\rho,s)+ \kappa(\rho,s) J(\rho,s)&=0\\
    J(\rho, -\tfrac{d(x, \sigma(\rho))}{2})=  J(\rho, \tfrac{d(x, \sigma(\rho))}{2})&=1.
    \end{cases}
\end{align*}
Differentiating with respect to $\rho$, commuting the derivatives as before, and evaluating at $\rho=0$, we find that
\begin{align}
    (\tfrac{\partial }{\partial \rho}J)''(0,s)+\kappa(0,s)    \tfrac{\partial J}{\partial \rho}(0,s) +\kappa_\rho(0,s)J=0.  \label{J-rho}
\end{align}
To find the boundary values of $    \tfrac{\partial J}{\partial \rho},$ we note that $J$ at the boundary points is constant (i.e. $1$). So by the first variation formula  $\tfrac{\partial}{\partial \rho}_{|_{\rho=0}}d(x,\sigma(\rho))=0,$ and hence 
\begin{align*}
    0=&\frac{d}{d\rho}_{|_{\rho=0}} J(\rho, \pm \tfrac{d(x, \sigma(\rho))}{2}) \\
    =&\tfrac{\partial J}{\partial \rho}(0, \pm \tfrac{d(x, y)}{2})+J'(0,\pm \tfrac{d(x,y)}{2})\tfrac{d}{d \rho}_{|_{\rho=0}}d(x,\sigma(\rho)) \\
    =&\tfrac{\partial J}{\partial \rho}(0, \pm \tfrac{d(x, y)}{2}).
\end{align*}
Similarly to the computation in the previous lemma, we apply Lemma \ref{solutionofsecondorderode} to equation \eqref{J-rho} to solve for $\tfrac{\partial J}{\partial \rho}(0,s)$, then differentiate this equation with respect to $s$. Evaluating the result at the endpoints, we find that
\begin{align*}
    \tfrac{\partial J'}{\partial \rho}(0, -\tfrac{d_0}{2})-\tfrac{\partial J'}{\partial \rho}(0, \tfrac{d_0}{2}) & 
=  \int_{\tfrac{-d_0}{2}}^{\tfrac{d_0}{2}}J^{1.0} \kappa_\rho(t)J(t)\, dt - \left(- \int_{\tfrac{-d_0}{2}}^{\tfrac{d_0}{2}}J^{0,1} \kappa_\rho(t)J(t)\, dt\right) \\ 
   & =\int_{\tfrac{-d_0}{2}}^{\tfrac{d_0}{2}}\kappa_\rho(t)J^2(t)\, dt  =\int_{\tfrac{-d_0}{2}}^{\tfrac{d_0}{2}}\kappa_1(t)J^{0,1}(t)J^2(t)\, dt
    \end{align*}
  where we have used $J=J^{1,0}+J^{0,1}$ and $\tfrac{\partial }{\partial \rho} =J^{0,1} e_1$ in the last two equalities. 
\end{proof}
Theorem \ref{CancellationProposition} now follows from the previous lemmas.

\subsection{Tangential derivatives of $\mathcal{C}$}
We now compute the $0\oplus e_2$ and $e_2\oplus 0$ derivatives of $\mathcal C$ and thus prove Lemma \ref{e_2-derivative-of-C}.
\begin{proof}[Proof of Lemma \ref{e_2-derivative-of-C}]
We first compute $\nabla _{0\oplus e_2}\mathcal C.$ Note that 
\begin{align*}
    \nabla _{0\oplus e_2}\mathcal C =\frac{d}{dt|_{t=0}}\mathcal C(x,\gamma(\tfrac{d_0}{2}+t)).
\end{align*}
For simplicity, we use the variation $\eta(t,s) = \gamma (s)$, with $s \in [-\tfrac{d_0}{2}, \tfrac{d_0}{2} +t]$. 
For $t$ small, let $J(t,s)$ denote the solution to
\begin{align*}
    \begin{cases}
    J''(t,s)+\kappa(\gamma(s))J(t,s) =0 \quad \textup{for  }s\in(-\tfrac{d_0}{2}, \tfrac{d_0}{2}+t)\\
    J(t, -\tfrac{d_0}{2})=J(t, \tfrac{d_0}{2} +t)=1.
    \end{cases}
\end{align*}
Then \[  2 \nabla _{e_2\oplus 0}\mathcal C= \frac{d}{dt}_{|_{t=0}}\left(J'(t,\tfrac d2 +t)-J'(t, -\tfrac d2)\right)
\]

In this formula, we changed the parametrization, but the solution is independent of the parametrization.
Differentiating at $t=0$ gives $J_t(0,s)$ for $s \in (-\tfrac{d_0}{2}, \tfrac{d_0}{2}).$ This satisfies the same equation, being 
\begin{align}
    J_t''+\kappa J_t=0 \quad \textup{in } (\tfrac{-d_0}{2}, \tfrac{d_0}{2}).  \label{J_t}
\end{align}
We have the boundary conditions:
\begin{align*}
    0=\tfrac{d}{dt}_{|_{t=0}}J(t, \tfrac{-d_0}{2})=J_t(0, \tfrac{-d_0}{2})
\end{align*}
as well as 
\begin{align*}
    0=\tfrac{d}{dt}_{|_{t=0}}J(t, \tfrac{d_0}{2} +t)=J_t(0, \tfrac{d_0}{2})+J'(\tfrac{d_0}{2}).
\end{align*}
In other words, we have that
\begin{align*}
    J_t(0, \tfrac{-d_0}{2})=0 \quad J_t(0, \tfrac{d_0}{2})=-J'(\tfrac{d_0}{2}).
\end{align*}
Applying Lemma~\ref{solutionofsecondorderode} to \eqref{J_t}, we get that 
\begin{align*}
    J_t(0,s)&=-
J'(\tfrac{d_0}{2})J^{0,1}(s),
\end{align*}
which yields
\begin{align}\label{J_t'-difference}
    J_t'(\tfrac{d}{2})-J_t'(-\tfrac{d}{2})&=-J'(\tfrac d2)\left((J^{0,1})'(\tfrac d2)-(J^{0,1})'(-\tfrac d2)\right)\\
    &=-|J'(\tfrac d2)|^2, \nonumber
\end{align}
where in the second equality we used Lemma \ref{HelpforLaplacianTerm}. We thus conclude that 
\begin{align*}
    2 \nabla _{0\oplus e_2}\mathcal C&=
    \frac{d}{dt}_{|_{t=0}}\left(J'(t,\tfrac d2 +t)-J'(t, -\tfrac d2)\right)\\
    &=  J_t'(0,\tfrac{d}{2})-J_t'(0,-\tfrac{d}{2})+J''(\tfrac{d_0}{2})\\
&=-|J'(\tfrac d2)|^2-\kappa(y),
\end{align*}
where we have used \eqref{J_t} in the second equality.\\
We are now computing $\nabla _{e_2\oplus 0}\mathcal C,$ being 
\begin{align*}
    \nabla _{e_2\oplus 0}\mathcal C=
    \frac{d}{dt}|_{t=0}\mathcal C(\gamma(-\tfrac d2+t),y).
\end{align*}With similar arguments as before, we find that $J(t,\cdot)$ is the solution to the problem 
\begin{align}\label{e_2-J(t)}
    \begin{cases}
    J''(t,s)+\kappa(\gamma_{x,y}(s))J(t,s) =0 \quad \textup{for  }s\in(-\tfrac{d_0}{2}+t, \tfrac{d_0}{2})\\
    J(t, -\tfrac{d_0}{2}+t)=J(t, \tfrac{d_0}{2})=1.
    \end{cases}
\end{align}
Then, after differentiating the equations \eqref{e_2-J(t)} for fixed $s\in (-\tfrac d2, \tfrac d2),$ and applying Lemma \ref{inhomogenousjacobimatrix-equ}, we find that 
\begin{align*}
    J_t(0,s)=-J'(-\tfrac d2)J^{1,0}(s).
\end{align*}
From which we get that, using Lemma \ref{HelpforLaplacianTerm}
\begin{align*}
    J_t'(\tfrac d2)-J_t'(-\tfrac d2)=|J'(-\tfrac d2)|^2,
\end{align*}
so that we conclude 
\begin{align*}
    2 \nabla _{e_2\oplus 0}\mathcal C&=
    \frac{d}{dt}_{|_{t=0}}\left(J'(t,\tfrac d2 )-J'(t, -\tfrac d2+t)\right)\\
    &=  J_t'(0,\tfrac{d}{2})-J_t'(0,-\tfrac{d}{2})-J''(-\tfrac{d}{2})\\
&=|J'(-\tfrac d2)|^2+\kappa(x).
\end{align*}
\end{proof}

\section{Second order Derivatives of $\mathcal C$}\label{DerivativesofCSection}

In this section we compute the second order derivatives of $\mathcal C$ and estimate the quantities in terms of curvature and its derivatives, showing that \eqref{estiamtesonderivative} holds.

Recall
\begin{align}\label{DefofEs}
    E_1=e_1\oplus e_1 \in T_x\Omega \oplus T_y\Omega, \quad   E_2=e_2\oplus(-e_2) \in T_x\Omega \oplus T_y\Omega.
\end{align}

\subsection{ $E_2$ derivatives}
We first compute the quantity $\nabla_{E_2,E_2}^2(-\mathcal C)$.

\begin{lem}\label{E_2derivativeofC}
    \begin{align*}
    \nabla_{E_2,E_2}^2(-\mathcal C)&
   =\kappa(x_0)J'(\tfrac{-d_0}{2})-\kappa(y_0)J'(\tfrac{d_0}{2})+\frac{1}{2}\Bigl[-\kappa'(\tfrac{-d_0}{2})+\kappa'(\tfrac{d_0}{2})  \Bigr]\\
    \quad &+[J'(\tfrac{-d_0}{2})]^3-[J'(\tfrac{d_0}{2})]^3-(J^{0,1})'(\tfrac{-d_0}{2})\left(J'(\tfrac{d_0}{2})+J'(\tfrac{-d_0}{2})\right)^2
\end{align*}

\end{lem}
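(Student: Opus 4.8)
The plan is to realize $\nabla_{E_2,E_2}^2(-\mathcal C)$ as a second derivative along a product geodesic and then differentiate the ODE defining $\mathcal C$ repeatedly, exactly in the spirit of the proof of Lemma~\ref{e_2-derivative-of-C} but now carrying one more $t$-derivative and moving \emph{both} endpoints. Since $E_2 = e_2\oplus(-e_2)$ with $e_2 = \gamma_{x_0,y_0}'(-\tfrac{d_0}{2})$, set $\sigma_1(t)=\gamma(-\tfrac{d_0}{2}+t)$ and $\sigma_2(t)=\gamma(\tfrac{d_0}{2}-t)$. These are geodesics whose connecting minimizing geodesic is just $\gamma$ restricted to $[-\tfrac{d_0}{2}+t,\tfrac{d_0}{2}-t]$, so $d(\sigma_1(t),\sigma_2(t))=d_0-2t$ and $(\sigma_1(t),\sigma_2(t))$ is a geodesic in $\Omega\times\Omega$ with velocity $E_2$ at $t=0$; hence $\nabla_{E_2,E_2}^2(-\mathcal C)=-\tfrac{d^2}{dt^2}\big|_{t=0}\mathcal C(\sigma_1(t),\sigma_2(t))$. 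Writing $J(t,\cdot)$ for the solution of $J''+\kappa(\gamma(s))J=0$ on $(-\tfrac{d_0}{2}+t,\tfrac{d_0}{2}-t)$ with $J(t,-\tfrac{d_0}{2}+t)=J(t,\tfrac{d_0}{2}-t)=1$ (with $'=\partial_s$), we have $\mathcal C(\sigma_1(t),\sigma_2(t))=\tfrac12\bigl(J'(t,\tfrac{d_0}{2}-t)-J'(t,-\tfrac{d_0}{2}+t)\bigr)$.

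Differentiating twice in $t$ and applying the chain rule at the two moving endpoints reduces $\tfrac{d^2}{dt^2}\big|_{t=0}\mathcal C$ to a fixed linear combination of $J'''$, $J_t''$, and $J_{tt}'$ evaluated at $s=\pm\tfrac{d_0}{2}$. Each of these is obtained by differentiating the Jacobi equation. For $J'''$, differentiating $J''+\kappa J=0$ in $s$ gives $J'''=-\kappa'J-\kappa J'$, which produces the $\kappa'(\pm\tfrac{d_0}{2})$ and $\kappa(\cdot)J'(\pm\tfrac{d_0}{2})$ contributions. For $J_t$: since the curvature along $\gamma$ is $t$-independent, $J_t$ again solves the \emph{homogeneous} equation $J_t''+\kappa J_t=0$, and differentiating $J(t,\cdot)\equiv1$ at the moving endpoints (where $\tfrac{d}{dt}d(\sigma_1,\sigma_2)=-2$ exactly) gives $J_t(0,\tfrac{d_0}{2})=J'(\tfrac{d_0}{2})$, $J_t(0,-\tfrac{d_0}{2})=-J'(-\tfrac{d_0}{2})$; Lemma~\ref{solutionofsecondorderode} then yields $J_t(0,s)=-J'(-\tfrac{d_0}{2})J^{1,0}(s)+J'(\tfrac{d_0}{2})J^{0,1}(s)$, hence both $J_t'(0,\pm\tfrac{d_0}{2})$ and $J_t''(0,\pm\tfrac{d_0}{2})=-\kappa(\cdot)J_t(0,\pm\tfrac{d_0}{2})$. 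For $J_{tt}$: differentiating twice in $t$ again gives the homogeneous equation $J_{tt}''+\kappa J_{tt}=0$, and differentiating $J(t,\cdot)\equiv1$ twice at the moving endpoints gives $J_{tt}(0,\tfrac{d_0}{2})=2J_t'(0,\tfrac{d_0}{2})+\kappa(y_0)$ and $J_{tt}(0,-\tfrac{d_0}{2})=-2J_t'(0,-\tfrac{d_0}{2})+\kappa(x_0)$, so Lemma~\ref{solutionofsecondorderode} gives $J_{tt}(0,s)=J^{1,0}(s)J_{tt}(0,-\tfrac{d_0}{2})+J^{0,1}(s)J_{tt}(0,\tfrac{d_0}{2})$ and thus the endpoint values of $J_{tt}'$.

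It then remains to substitute everything back and simplify. Here one uses the Jacobi-field identities already established: $J=J^{1,0}+J^{0,1}$, the normalizations $J^{1,0}(-\tfrac{d_0}{2})=J^{0,1}(\tfrac{d_0}{2})=1$ and $J^{1,0}(\tfrac{d_0}{2})=J^{0,1}(-\tfrac{d_0}{2})=0$, and the relation $(J^{1,0})'(\tfrac{d_0}{2})=-(J^{0,1})'(-\tfrac{d_0}{2})$ from Lemma~\ref{HelpforLaplacianTerm}/Remark~\ref{n-dim-jacobi-symmetry}. After expressing $J_t'(0,\pm\tfrac{d_0}{2})$ through $J'(\pm\tfrac{d_0}{2})$ and the basis derivatives, the terms linear in $\kappa$ and $\kappa'$ collect into $\kappa(x_0)J'(-\tfrac{d_0}{2})-\kappa(y_0)J'(\tfrac{d_0}{2})+\tfrac12\bigl[-\kappa'(-\tfrac{d_0}{2})+\kappa'(\tfrac{d_0}{2})\bigr]$, while the terms quadratic and cubic in $J'$ collect into $[J'(-\tfrac{d_0}{2})]^3-[J'(\tfrac{d_0}{2})]^3-(J^{0,1})'(-\tfrac{d_0}{2})\bigl(J'(\tfrac{d_0}{2})+J'(-\tfrac{d_0}{2})\bigr)^2$, giving the claimed formula.

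The main obstacle is this last bookkeeping step rather than any single computation: one must track signs carefully through the chain rule for the two oppositely-moving endpoints and then correctly gather the many $J^{1,0},J^{0,1}$ endpoint-derivative contributions — using the Wronskian identity at both ends — into the compact closed form. The $J_{tt}$ computation in particular is the one genuinely new ingredient compared with Section~\ref{derivativesection}, and it is where the cubic term $(J^{0,1})'(-\tfrac{d_0}{2})\bigl(J'(\tfrac{d_0}{2})+J'(-\tfrac{d_0}{2})\bigr)^2$ originates, so the simplification there should be done with care.
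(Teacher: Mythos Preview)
Your proposal is correct and follows essentially the same approach as the paper's own proof: the same variation $\sigma_1(t)=\gamma(-\tfrac{d_0}{2}+t)$, $\sigma_2(t)=\gamma(\tfrac{d_0}{2}-t)$, the same decomposition of the second $t$-derivative into contributions from $J'''$, $J_t''$, and $J_{tt}'$, the same homogeneous Jacobi equations for $J_t$ and $J_{tt}$ with the same boundary values, and the same use of $J=J^{1,0}+J^{0,1}$ together with Lemma~\ref{HelpforLaplacianTerm} in the final simplification. The paper merely organizes the three contributions as labeled terms $I$, $II$, $III$ and treats them in separate steps, but the content is identical.
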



\begin{proof}
Since we are differentiating $\mathcal{C}$ twice in the direction of the geodesic, we consider 
$J(t,\cdot)$ which solve the equation
\begin{align*}
    \begin{cases}
    J''(t,s)+\kappa(\gamma_{x,y}(s)) J(t,s)=0 \quad \textup{for } s\in (-\tfrac d2+t,\tfrac d2-t)\\
    J(t,\tfrac{-d_0}{2}+t)=J(t, \tfrac{d_0}{2}-t)=1.
    \end{cases}
\end{align*}
Doing so, we find the following:
\begin{eqnarray}\label{E_2derivative} 
    \nabla^2_{E_2,E_2}\mathcal C &=&\frac{d^2}{dt^2}_{|_{t=0}}\frac{1}{2}\left( J'(t,\tfrac{-d_0}{2}+t)-J'(t,\tfrac{d_0}{2}-t)\right)  \\
    &=&\frac{1}{2}\left(\underbrace{ J_{tt}'(0,\tfrac{-d_0}{2})-J_{tt}'(0,\tfrac{d_0}{2})}_{I}+\underbrace{J'''(0,\tfrac{-d_0}{2})-J'''(0,\tfrac{d_0}{2})}_{II}\right) \nonumber \\
    & & +\left(\underbrace{J''_t(0,\tfrac{-d_0}{2})+J''_t(0,\tfrac{d_0}{2})}_{III} \right).\nonumber
\end{eqnarray}
To finish the lemma, we compute the terms $I,II,$ and $III$ on the right hand side in terms of curvature and its derivatives.

\textbf{Step 1:} Computing $III.$ \\
Observe that $J_t(0,\cdot)=\tfrac{\partial J}{\partial t}(0,\cdot)$ solves the equation \begin{align*}
    J''+\kappa J=0\quad \textrm{ in }(-\tfrac{d_0}{2},\tfrac{d_0}{2})
\end{align*}
with boundary conditions
\begin{align*}
    J_t(0,\tfrac{-d_0}{2})=-J'(-\tfrac{d_0}{2}), \quad \quad J_t(0, \tfrac{d_0}{2})=J'(\tfrac{d_0}{2}).
\end{align*}
We therefore find
\begin{align}\label{E_22-variationofJ}
    J_t(s)=-J'(\tfrac{-d_0}{2})J^{1,0}(s)+J'(\tfrac{d_0}{2})J^{0,1}(s),
\end{align}
from which it follows that 
\begin{equation} \label{III-E_2}
    J''_t(0,\tfrac{-d_0}{2})+J''_t(0,\tfrac{d_0}{2})= \kappa(x_0)J'(\tfrac{-d_0}{2})-\kappa(y_0)J'(\tfrac{d_0}{2}).
\end{equation}

\textbf{Step 2:} Computing $II.$ \\
 For this term, note that
\[J'''=-(\kappa J)'=-\kappa'J-\kappa J',\] from which we find that 
\begin{equation}\label{II-E_2}
    J'''(0,\tfrac{-d_0}{2})-J'''(0,\tfrac{+d_0}{2})=-\kappa(x_0)J'(\tfrac{-d_0}{2})-\kappa'(\tfrac{-d_0}{2})+\kappa(y_0)J'(\tfrac{d_0}{2})+\kappa'(\tfrac{d_0}{2}).
\end{equation}

\textbf{Step 3:} Computing $I.$ \\
 Observe that $J_{tt}(0,\cdot)$ solves the equation 
\begin{align*}
    J''+\kappa J=0.
\end{align*}
Using the identities
\begin{eqnarray*}
    0&=&\frac{d^2}{dt^2}_{|_{t=0}}J(t, \tfrac{-d_0}{2}+t)=J_{tt}(0,\tfrac{-d_0}{2})+2J_t'(\tfrac{-d_0}{2})-\kappa(x_0)\\
     0&=&\frac{d^2}{dt^2}_{|_{t=0}}J(t, \tfrac{d_0}{2}+t)=J_{tt}(0,\tfrac{d_0}{2})-2J_t'(\tfrac{d_0}{2})-\kappa(y_0),
\end{eqnarray*}
we derive the boundary conditions 
\begin{align*}
  J_{tt}(0,\tfrac{-d_0}{2})=-2J_t'(\tfrac{-d_0}{2})+\kappa(x_0) , \quad J_{tt}(0,\tfrac{d_0}{2})=2J_t'(\tfrac{d_0}{2})+\kappa(y_0).
\end{align*}
From this, we find
\begin{align*}
     J_{tt}(s)=[-2J_t'(\tfrac{-d_0}{2})+\kappa(x_0)]J^{1,0}(s)+[2J_t'(\tfrac{d_0}{2})+\kappa(y_0)]J^{0,1}(s).
\end{align*}
After writing $J=J^{1,1}$, we find that
\begin{align}\label{I-E_2-I}
     J_{tt}'(\tfrac{-d_0}{2})-J_{tt}'(\tfrac{d_0}{2})=[-2J_t'(\tfrac{-d_0}{2})+\kappa(x_0)]J'(\tfrac{-d_0}{2})-[2J_t'(\tfrac{d_0}{2})+\kappa(y_0)]J'(\tfrac{d_0}{2}).
\end{align}
From \eqref{E_22-variationofJ}, we find the following expression for $J_t'(\tfrac{\pm d_0}{2}):$
\begin{eqnarray*}
    J'_t(\tfrac{-d_0}{2})&=&-J'(\tfrac{-d_0}{2})(J^{1,0})'(\tfrac{-d_0}{2})+J'(\tfrac{d_0}{2})(J^{0,1})'(\tfrac{-d_0}{2})\\
     J'_t(\tfrac{d_0}{2})&=&-J'(\tfrac{-d_0}{2})(J^{1,0})'(\tfrac{d_0}{2})+J'(\tfrac{d_0}{2})(J^{0,1})'(\tfrac{d_0}{2}).
\end{eqnarray*}
Combining this and \eqref{I-E_2-I}, we get that 
\begin{eqnarray}
    J_{tt}'(0,\tfrac{-d_0}{2})-J_{tt}'(0,\tfrac{+d_0}{2})& =&[-2\left(-J'(\tfrac{-d_0}{2})(J^{1,0})'(\tfrac{-d_0}{2})+J'(\tfrac{d_0}{2})(J^{0,1})'(\tfrac{-d_0}{2})\right)+\kappa(x_0)]J'(\tfrac{-d_0}{2}) \nonumber \\
   & & -[2\left(-J'(\tfrac{-d_0}{2})(J^{1,0})'(\tfrac{d_0}{2})+J'(\tfrac{d_0}{2})(J^{0,1})'(\tfrac{d_0}{2})\right)+\kappa(y_0)]J'(\tfrac{d_0}{2}).  \label{I-E_2}
\end{eqnarray}
Combining Equations \eqref{III-E_2},\eqref{II-E_2}, and \eqref{I-E_2} into \eqref{E_2derivative}, we find that
\begin{align*}
    \nabla_{E_2,E_2}^2(-\mathcal C)
   &=\kappa(x_0)J'(\tfrac{-d_0}{2})-\kappa(y_0)J'(\tfrac{d_0}{2})+\frac{1}{2}\Bigl[-\kappa'(\tfrac{-d_0}{2})+\kappa'(\tfrac{d_0}{2})  \Bigr]\\
    \quad &\quad \quad +|J'(\tfrac{-d_0}{2})|^2(J^{1,0})'(\tfrac{-d_0}{2})-J'(\tfrac{d_0}{2})J'(\tfrac{-d_0}{2})(J^{0,1})'(\tfrac{-d_0}{2})\\
   & \quad \quad  +J'(\tfrac{d_0}{2})J'(\tfrac{-d_0}{2})(J^{1,0})'(\tfrac{d_0}{2})-|J'(\tfrac{d_0}{2})|^2(J^{0,1})'(\tfrac{d_0}{2}).
\end{align*}
Finally, making use of $J=J^{1,0}+J^{0,1}$ and Lemma \ref{HelpforLaplacianTerm}, we obtain the desired result.
\end{proof}

\subsection{ $E_1$ derivatives}
We now compute the quantity $\nabla_{E_1,E_1}^2(-\mathcal C)$.

\begin{lem}\label{E_1derivativeofC}
    \begin{eqnarray}
   \nabla ^2_{E_1,E_1}(-\mathcal  C)
  =\int \tfrac{J^2(s)}{2}\Bigl[ \kappa_{11}(s)J^2(s)+3\kappa_1(s){p(s)}+\kappa_2(s)q(s)\Bigr]\,ds+\mathcal C(x,y)\nabla_{e_2\oplus(- e_2)} \mathcal C \nonumber
    \end{eqnarray}
where $p(s), q(s)$ are  given by 
\begin{align} \label{p(s) definition}
  p(s)&=-J^{1,0}(s)\int_{\tfrac{-d_0}{2}}^s\tfrac{J^{0,1}(w)}{(J^{1,0})'\left(\tfrac{d_0}{2}\right)}J(w)\kappa_1(w)\,dw-J^{0,1}(s)\int_{s}^{\tfrac{d_0}{2}}\tfrac{J^{1,0}(w)}{(J^{1,0})'\left(\tfrac{d_0}{2} \right)}J(s)\kappa_1(s)\,dw\\
  q(s)&= \tfrac{1}{d_0}(\tfrac{d_0}{2}-s)J'(\tfrac{-d_0}{2}) +\tfrac{1}{d_0}(s+\tfrac{d_0}{2})J'(\tfrac{d_0}{2})-J'(s)J(s)\label{q-definition}
\end{align}
\end{lem}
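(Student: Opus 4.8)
The plan is to compute $\nabla^2_{E_1,E_1}(-\mathcal C)$ by differentiating twice, along a geodesic variation, the ODE that defines $\mathcal C$ — the same strategy as in the proof of Lemma~\ref{E_2derivativeofC}, but now with the variation $\eta_1(r,s)$ of \eqref{Definition_of_eta} that moves $x_0,y_0$ in the normal direction $e_1$. The first thing I would record is that the first variation field is $\partial_r\eta_1(0,s)=J(s)\,e_1(s)$: in dimension two the Jacobi field along $\gamma_{x_0,y_0}$ with both endpoint values equal to the parallel normal field $e_1$ has vanishing tangential part, so it equals $J\,e_1$ with $J$ the scalar solution of \eqref{Jacobi-ODE}; in particular $\partial_r\kappa(0,s)=\langle\nabla\kappa,J e_1\rangle=J\kappa_1$.

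Next, for each $r$ I would let $\mathcal J(r,\cdot)$ be the width-one normal Jacobi field along $\eta_1(r,\cdot)$, reparametrized onto the fixed interval $[-\tfrac{d_0}2,\tfrac{d_0}2]$, so that $\mathcal J$ solves $\mathcal J_{ss}+\tfrac{d(r)^2}{d_0^2}\kappa(\eta_1(r,s))\,\mathcal J=0$ with $\mathcal J(r,\pm\tfrac{d_0}2)=1$, where $d(r)=d(\sigma_1(r),\sigma_2(r))$, and $2\mathcal C(\sigma_1(r),\sigma_2(r))=\tfrac{d_0}{d(r)}\big(\mathcal J_s(r,\tfrac{d_0}2)-\mathcal J_s(r,-\tfrac{d_0}2)\big)$. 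Thus $\nabla^2_{E_1,E_1}(-\mathcal C)=-\tfrac12\tfrac{d^2}{dr^2}\big|_{0}$ of the right-hand side. Because $\partial_r\eta_1(0,\cdot)$ is normal to $\gamma'$ at the endpoints, the first variation of length vanishes, $d'(0)=0$, so no cross terms appear; and $d''(0)=\nabla^2_{E_1,E_1}d(x_0,y_0)=2\mathcal C(x_0,y_0)$ by Lemma~\ref{C-is-derivative-of-distance-lemma}. Differentiating the ODE twice in $r$ produces inhomogeneous Jacobi equations for $\partial_r\mathcal J(0,\cdot)$ and $\partial_r^2\mathcal J(0,\cdot)$ with vanishing boundary data; I would solve them with Lemma~\ref{solutionofsecondorderode} and evaluate the jump $\partial_s(\partial_r^2\mathcal J)|_{d_0/2}-\partial_s(\partial_r^2\mathcal J)|_{-d_0/2}$ using the endpoint identity of Lemma~\ref{HelpforLaplacianTerm} and Remark~\ref{n-dim-jacobi-symmetry}, which collapses the answer to a single integral over $[-\tfrac{d_0}2,\tfrac{d_0}2]$ of $J$ against the forcing terms.

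The forcing terms are $\partial_r\big(\tfrac{d(r)^2}{d_0^2}\kappa\big)$ and $\partial_r^2\big(\tfrac{d(r)^2}{d_0^2}\kappa\big)$ at $r=0$. The length factor, via $d''(0)=2\mathcal C$ and Lemma~\ref{e_2-derivative-of-C}, contributes the term $\mathcal C(x,y)\,\nabla_{e_2\oplus(-e_2)}\mathcal C$, while the curvature factor contributes $\partial_r^2\kappa(0,s)=\Hess\kappa(\partial_r\eta_1,\partial_r\eta_1)+\langle\nabla\kappa,\nabla_r\partial_r\eta_1\rangle=J^2\kappa_{11}+\langle\nabla\kappa,\nabla_r\partial_r\eta_1\rangle$, so the normal and tangential components of the second-order variation field $\nabla_r\partial_r\eta_1(0,\cdot)$ pair against $\kappa_1$ and $\kappa_2$. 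The remaining task is then to compute those two components together with $\partial_r\mathcal J(0,\cdot)$ explicitly: solving the relevant inhomogeneous Jacobi equation for the normal component (which vanishes at the endpoints since $\sigma_1,\sigma_2$ are geodesics), and writing the tangential component as the affine interpolation of the endpoint data corrected by the $-J'J$ term forced by the non-constant speed of $\eta_1(r,\cdot)$; one then recognises the resulting combinations as exactly $3p$ and $q$ from \eqref{p(s) definition}--\eqref{q-definition}.

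I expect this last step to be the main obstacle — the explicit computation of the normal and tangential parts of $\nabla_r\partial_r\eta_1(0,\cdot)$, their combination with $\partial_r\mathcal J(0,\cdot)$ (which solves a related but different inhomogeneous Jacobi equation), and the bookkeeping of the reparametrization corrections and of the factors of two in the length-variation term. By contrast, once $p$ and $q$ are correctly identified, the assembly via Lemmas~\ref{solutionofsecondorderode} and~\ref{HelpforLaplacianTerm} is mechanical, and the cancellation of the $[J'(\pm\tfrac{d_0}2)]^3$ and $\kappa(x_0)J'(-\tfrac{d_0}2),\,\kappa(y_0)J'(\tfrac{d_0}2)$ terms against those in Lemma~\ref{E_2derivativeofC} (as used just after the statement of Lemma~\ref{estiamtesonderivative}) is a good consistency check on the signs.
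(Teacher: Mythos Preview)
Your outline is essentially the paper's approach: vary both endpoints along the normal geodesics, differentiate the Jacobi ODE twice, solve the resulting inhomogeneous equations via Lemma~\ref{solutionofsecondorderode}, collapse the endpoint differences using Lemma~\ref{HelpforLaplacianTerm}, and compute the second-order variation field $\nabla_r\partial_r\eta_1(0,\cdot)$ componentwise. The paper packages the latter as a separate lemma (Lemma~\ref{variation-in-coordinates}), and its normal and tangential parts are exactly your $p(s)$ and $q(s)$; the $3p$ appears because $\kappa_{tt}J^2+2\kappa_tJ_tJ$ contributes $\kappa_1p$ once from $\kappa_{tt}$ and $2\kappa_1p$ from the cross term, just as you say.

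The one technical divergence is your fixed-interval reparametrization versus the paper's unit-speed variable-interval one. In the paper the term $\mathcal C\,\nabla_{e_2\oplus(-e_2)}\mathcal C$ does \emph{not} come from a ``length factor'' in the ODE: it arises from the boundary values $J_{tt}(0,\pm\tfrac{d_0}2)=\mp\tfrac12 J'(\pm\tfrac{d_0}2)\,\nabla^2_{E_1,E_1}d$, which through the homogeneous part of the solution give $\mathcal C\bigl[|J'(\tfrac{d_0}2)|^2+|J'(-\tfrac{d_0}2)|^2\bigr]$, plus the chain-rule corrections $\pm\tfrac{\kappa(\cdot)}{2}\nabla^2_{E_1,E_1}d$ from differentiating $J'(t,\pm\tfrac{d(t)}2)$; these four pieces are then recognized via Lemma~\ref{e_2-derivative-of-C}. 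In your setup the boundary data of $\partial_r^2\mathcal J$ vanish and the compensation must come from $\partial_r^2\bigl(d(r)^2/d_0^2\bigr)\,\kappa J$ in the forcing together with the $\partial_r^2\bigl(d_0/d(r)\bigr)$ prefactor; this will work, but the identification requires an integration by parts using $J''=-\kappa J$ and is not as direct as you suggest.

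One small correction to your closing remark: the $[J'(\pm\tfrac{d_0}2)]^3$ and $\kappa(x_0)J'(-\tfrac{d_0}2),\ \kappa(y_0)J'(\tfrac{d_0}2)$ terms do not appear in the $E_1$ computation at all --- they come only from $\nabla^2_{E_2,E_2}(-\mathcal C)$ and survive into Proposition~\ref{E_1+E_2-derivatives-of-C}. So there is no cancellation to check; the $E_1$ result is already the clean integral plus $\mathcal C\,\nabla_{e_2\oplus(-e_2)}\mathcal C$.
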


We make several notes about this computation. We have used $t$ to indicate the variation instead of $r$, to emphasize that the variations are parametrized differently. Unlike the previous expression, it involves integral quantities defined along $\gamma$ so is non-local in nature.

\begin{proof}
Consider the geodesics
\begin{eqnarray*}
    \sigma_x(t) = \exp_x(te_1(-\tfrac d2)) \quad \sigma_y(t) = \exp_y(t e_1 (\tfrac d2))
\end{eqnarray*}
which extend perpendicularly from the endpoints of $\gamma$. For small $t,$ denote $\eta(t, \cdot)$ to be the minimal geodesic between $\sigma_x(t)$ and $\sigma_y(t),$ normalized to have unit speed. 

Doing so, we find that
\begin{align*}
    \frac{d^2}{dt^2}_{|t=0} \mathcal C(\sigma_x(t),\sigma_y(t))=\nabla_{E_1,E_1}^2 \mathcal C(x,y).
\end{align*}
To compute this quantity, we must compute the relevant Jacobi fields and determine their dependence on $t.$ We find that 
\begin{eqnarray}
     \frac{d^2}{dt^2}_{|t=0}J'(t,-\tfrac{d(\sigma_x(t),\sigma_y(t))}{2})&=&J_{tt}'(0,\tfrac{d_0}{2})-J''(0,\tfrac{d_0}{2})\frac{1}{2}\nabla^2_{E_1,E_1}d{(x_0,y_0)}\\
     &=&J_{tt}'(0,\tfrac{-d_0}{2})+\frac{\kappa(x_0)}{2}\nabla^2_{E_1,E_1}d{(x_0,y_0)}. \nonumber
\end{eqnarray}
And similarly,
\begin{align*}
     \frac{d^2}{dt^2}_{|t=0}J'(t,\tfrac{d(\sigma_x(t),\sigma_y(t))}{2})=J_{tt}'(0,\tfrac{d_0}{2})-\frac{\kappa(y_0)}{2}\nabla^2_{E_1,E_1}d{(x_0,y_0)}
\end{align*}
From this, it remains to calculate
$ J_{tt}'(0,\tfrac{-d_0}{2})-J_{tt}'(0,\tfrac{d_0}{2}).$

 $J_{tt}$ satisfies the equation 
\begin{align*}
    \begin{cases} J_{tt}''+\kappa_{tt}J+2\kappa_tJ_t+\kappa J_{tt}=0\\
    J_{tt}(0,\tfrac{-d_0}{2})=\tfrac{1}{2}J'(-\tfrac{d_0}{2})\nabla ^2_{E_1,E_1}d(x,y),\\
    J_{tt}(0,\tfrac{d_0}{2})=-\tfrac{1}{2}J'(\tfrac{d_0}{2})\nabla ^2_{E_1,E_1}d(x,y).
    \end{cases}
\end{align*}

Applying Lemma \ref{solutionofsecondorderode}, we find the following solution for $J_{tt}$:
\begin{align} \label{Jttfirstequation}
    J_{tt}(s)&= \left[\tfrac{1}{2}J'(-\tfrac{d_0}{2})\nabla ^2_{E_1,E_1}d \right ] J^{1,0}(s)+[-\tfrac{1}{2}J'(\tfrac{d_0}{2})\nabla ^2_{E_1,E_1}d]J^{0,1}(s)\\
    &\quad +J^{1,0}(s)\int_{-\tfrac{d_0}{2}}^{s}J^{0,1}(w)\tfrac{m(w)}{(J^{1,0})'(\tfrac{d_0}{2})}\,dw+J^{0,1}(s)\int_s^{\tfrac{d_0}{2}}J^{1,0}(w)\tfrac{m(w)}{(J^{1,0})'(\tfrac{d_0}{2})}\,dw \nonumber
\end{align}
where \[m(w)=-\kappa_{tt}J-2\kappa_tJ_t.\] By the same argument, we find that \begin{align}\label{first-order-jacobivariation}
    J_t(0,s)=p(s).
\end{align} 

Combining Lemma \ref{HelpforLaplacianTerm} with the identities $2\mathcal C(x,y)=\nabla^2_{E_1E_1} d(x,y),$ and $J=J^{1,0}+J^{0,1},$ Equation \eqref{Jttfirstequation} simplifies to the following:
\begin{align} \label{Jttprime}
    J_{tt}'(\tfrac{-d_0}{2})-J_{tt}'(\tfrac{d_0}{2})=\mathcal C(x,y)\Bigl[|J'(\tfrac{d_0}{2})|^2+|J'(\tfrac{-d_0}{2})|^2\Bigr]+\int_{\tfrac{-d_0}{2}}^{\tfrac{d_0}{2}}\underbrace{\kappa_{tt}J^2+2\kappa_tJ_tJ}_{IV}\,dw,
\end{align}
Observe that by Lemma \ref{0opluse_2-terms-of-F}, we get that 
\begin{align*}
    \mathcal C(x,y)\nabla_{e_2\oplus (-e_2)}\mathcal C(x,y)=\frac{1}{2}\mathcal C(x,y)\Bigl(|J'(\tfrac{d_0}{2})|^2+|J'(\tfrac{-d_0}{2})|^2+\kappa(x_0)+\kappa(y_0)\Bigr).
\end{align*}It thus only remains to compute the terms $IV$. Since
\begin{align*}
    \kappa_t(s)=\frac{d}{dt}_{|_{t=0}}\kappa(\eta(t,s))=J(s)\langle \nabla \kappa, e_1\rangle=\kappa_1 J(s),
\end{align*}
and
\begin{align*}
    \kappa_{tt}=\frac{d^2}{dt^2}_{|_{t=0}}\kappa(\eta(t,s))=\langle\nabla_{\tfrac{\partial \eta}{\partial t}(t,s)} \nabla \kappa(\eta(t,s)), \tfrac{\partial \eta}{\partial t}(t,s)\rangle+ \langle \nabla \kappa, \nabla _{\tfrac{\partial \eta}{\partial t}} \tfrac{\partial \eta }{\partial t}\rangle, 
\end{align*}
it remains to compute $\nabla _{\tfrac{\partial \eta}{\partial t}} \tfrac{\partial \eta }{\partial t}$ at $t=0.$ For this, we need the following lemma.

\begin{lem}\label{variation-in-coordinates}
\begin{align*}
    \nabla _{\tfrac{\partial \eta}{\partial t}} \tfrac{\partial \eta }{\partial t}_{|_{t=0}}=p(s)e_1(s)+e_2(s)\left(\tfrac{1}{d_0}(\tfrac{d_0}{2}-s)J'(\tfrac{-d_0}{2}) +\tfrac{1}{d_0}(s+\tfrac{d_0}{2})J'(\tfrac{d_0}{2})-J'(s)J(s)\right)
\end{align*}  
\end{lem}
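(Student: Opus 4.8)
The plan is to run the computation in the moving orthonormal frame $\{T,V\}$ along $\eta(t,\cdot)$ introduced in the proof of Lemma~\ref{First half of cancellation}, where $T=\eta'/\|\eta'\|$ and $V$ is the unit normal obtained by parallel transport; at $t=0$ this frame is $T(0,s)=e_2(s)$, $V(0,s)=e_1(s)$. Decomposing the variation field as $\tfrac{\partial\eta}{\partial t}=J_T T+J_V V$, the facts already recorded in that proof are exactly what is needed: $J_T(0,s)=0$ (the tangential part of a Jacobi field is affine and vanishes at both endpoints), $J_V(0,s)=J(s)$ (its normal part solves $f''+\kappa f=0$ with $f(\pm\tfrac{d_0}{2})=1$), and $\partial_t J_V(0,\pm\tfrac{d_0}{2})=0$ as well as $\nabla_t\tfrac{\partial\eta}{\partial t}=0$ at the two endpoints (since $\sigma_x,\sigma_y$ are geodesics). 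Since $\nabla_{\partial\eta/\partial t}\tfrac{\partial\eta}{\partial t}$ along a fixed $s$ is the covariant acceleration $\nabla_t\tfrac{\partial\eta}{\partial t}$ of $t\mapsto\eta(t,s)$, differentiating the decomposition and discarding the term $J_T\nabla_t T$ (zero because $J_T(0,\cdot)\equiv 0$) leaves
\[
\nabla_{\partial\eta/\partial t}\tfrac{\partial\eta}{\partial t}\Big|_{t=0}=(\partial_t J_V)(0,s)\,e_1(s)+(\partial_t J_T)(0,s)\,e_2(s)+J(s)\,\nabla_t V(0,s).
\]

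First I would compute $\nabla_t V(0,s)$. As $V$ has unit length, $\nabla_t V\perp V$, so $\nabla_t V=\langle\nabla_t V,T\rangle T=-\langle V,\nabla_t T\rangle T$; and since $T$ is $\partial_s\eta$ up to a normalization whose $t$-derivative vanishes at $t=0$, the symmetry of mixed covariant derivatives gives $\nabla_t T(0,s)=\nabla_s\tfrac{\partial\eta}{\partial t}(0,s)=J'(s)e_1(s)$, whence $\nabla_t V(0,s)=-J'(s)e_2(s)$. This contributes $-J(s)J'(s)$ to the $e_2$–coefficient. Next, for $(\partial_t J_V)(0,s)$: differentiating in $t$ the Jacobi equation satisfied by $J_V$ and evaluating at $t=0$ yields an inhomogeneous Jacobi equation with source $-\kappa_1 J^2$ and, by the boundary facts above, zero boundary data, so Lemma~\ref{solutionofsecondorderode} produces $(\partial_t J_V)(0,s)=p(s)$ — this is exactly Equation~\eqref{first-order-jacobivariation}. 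Finally, for $(\partial_t J_T)(0,s)$: using the affine formula $J_T(t,s)=\tfrac{1}{d_0}(\tfrac{d_0}{2}-s)\langle T(t,-\tfrac{d_0}{2}),\sigma_x'(t)\rangle+\tfrac{1}{d_0}(s+\tfrac{d_0}{2})\langle T(t,\tfrac{d_0}{2}),\sigma_y'(t)\rangle$ from Lemma~\ref{First half of cancellation}, differentiating at $t=0$, and using $\nabla_t T(0,\pm\tfrac{d_0}{2})=J'(\pm\tfrac{d_0}{2})e_1(\pm\tfrac{d_0}{2})$ together with $\nabla_t\sigma_x'(0)=\nabla_t\sigma_y'(0)=0$, one obtains $(\partial_t J_T)(0,s)=\tfrac{1}{d_0}(\tfrac{d_0}{2}-s)J'(-\tfrac{d_0}{2})+\tfrac{1}{d_0}(s+\tfrac{d_0}{2})J'(\tfrac{d_0}{2})$.

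Assembling the three pieces, the $e_1$–coefficient is $p(s)$ and the $e_2$–coefficient is $(\partial_t J_T)(0,s)-J(s)J'(s)=q(s)$, which is the asserted identity.

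The main obstacle is purely organizational rather than conceptual. If one instead differentiates $\langle\tfrac{\partial\eta}{\partial t},\partial_s\eta\rangle$ directly, the moving endpoints $\sigma_x(t),\sigma_y(t)$ and the unit-speed reparametrization of $\eta(t,\cdot)$ generate spurious terms involving $d'(t)$ and $\tfrac{d^2}{dt^2}d(\sigma_x(t),\sigma_y(t))=2\mathcal{C}(x,y)$; routing everything through the frame $\{T,V\}$ and the boundary identities already established in Lemma~\ref{First half of cancellation} is precisely what prevents these from appearing. The one extra point I would check carefully is that $J_V$ and the width function $J(t,\cdot)$ of \eqref{Jacobi-ODE} agree to first order in $t$ at $t=0$ (both equal $J(\cdot)$ there and both $t$-derivatives solve the same inhomogeneous Jacobi equation with zero boundary data), so that invoking \eqref{first-order-jacobivariation} is legitimate.
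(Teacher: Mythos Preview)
Your proof is correct and follows essentially the same route as the paper: both decompose $\tfrac{\partial\eta}{\partial t}=J_T T+J_V V$, compute the $e_2$-coefficient via the affine formula for $J_T$ together with $\nabla_t T(0,s)=J'(s)e_1(s)$, and obtain the $e_1$-coefficient by solving the inhomogeneous Jacobi equation for $\partial_t J_V$ with zero boundary data. The only cosmetic difference is that you expand $\nabla_t(J_T T+J_V V)$ directly by the product rule, whereas the paper projects $\nabla_t\tfrac{\partial\eta}{\partial t}$ onto $\tfrac{\partial\eta}{\partial t}/J$ and $T$ and recovers the same pieces; the ingredients and logic are identical.
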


\begin{proof}[Proof of Lemma \ref{variation-in-coordinates}]
The derivation is very similar to those in Section \ref{derivativesection}. Observe that
\begin{align*}
     \nabla _{\tfrac{\partial \eta}{\partial t}}\tfrac{\partial \eta}{\partial t}_{|_{t=0}} 
    &=\langle \nabla _{\tfrac{\partial \eta}{\partial t}}\tfrac{\partial \eta}{\partial t}_{|_{t=0}}, \tfrac{\partial \eta}{\partial t}\rangle \tfrac{e_1(s)}{J(s)}+\langle \nabla _{\tfrac{\partial \eta}{\partial t}}\tfrac{\partial \eta}{\partial t}_{|_{t=0}},\tfrac{\eta'}{\|\eta'\|}\rangle e_2(s),
\end{align*}
so it suffices to compute $  \langle \nabla _{\tfrac{\partial \eta}{\partial t}} \tfrac{\partial \eta }{\partial t}, \tfrac{\partial \eta}{\partial t}\rangle$ and $  \langle \nabla _{\tfrac{\partial \eta}{\partial t}} \tfrac{\partial \eta }{\partial t}, \tfrac{\eta'}{\|\eta'\|}\rangle.$

Note that 
\begin{align*}
   \langle \nabla _{\tfrac{\partial \eta}{\partial t}} \tfrac{\partial \eta }{\partial t}, \tfrac{\eta'}{\|\eta'\|}\rangle=\frac{d}{dt}_{|_{t=0}}  \langle \tfrac{\partial \eta }{\partial t}, \tfrac{\eta'}{\|\eta'\|}\rangle-J'(s)J(s),
\end{align*}
where we used the fact that $\partial_t\|\eta'\|=0$ at $t=0.$  In order to find $\tfrac{d}{dt}_{|_{t=0}}  \langle \tfrac{\partial \eta }{\partial t}, \tfrac{\eta'}{\|\eta'\|}\rangle,$ we use a similar argument as in the previous section.  As before, $\tfrac{\partial \eta}{\partial t}$ is a Jacobi field which we decompose with an orthonormal frame as $T=\tfrac{\eta'}{\|\eta'\|}$ and an orthogonal part  $V=V(t,s)\perp \eta'.$ We then have that 
\begin{align*}
J(t,s)=J_V(t,s)V+J_T(t,s)T.
\end{align*}
Since \begin{align*}
     \langle \tfrac{\partial \eta }{\partial t}, \tfrac{\eta'}{\|\eta'\|}\rangle=J_T(t,s),
\end{align*}
we have to find 
 $(J_T)_t(0,s).$ With similar arguments as in the previous section, we find that 
\begin{align*}
    J_T''(t,s)=0, \quad J_T(t, \tfrac{-d_0}{2})=\langle \tfrac{\partial \sigma_x}{\partial t}, \tfrac{\eta'}{\|\eta'\| }\rangle(t, \tfrac{-d_0}{2}), \quad J_T(t, \tfrac{d_0}{2})=\langle \tfrac{\partial \sigma_y}{\partial t}, \tfrac{\eta'}{\|\eta'\| }\rangle(t, \tfrac{d_0}{2}),  
\end{align*}
This gives that 
\begin{align*}
    J_T(t,s)=\tfrac{1}{d_0}(\tfrac{d_0}{2}-s)\langle \tfrac{\partial \sigma_x}{\partial t}, \tfrac{\eta'}{\|\eta'\| }\rangle(t, \tfrac{-d_0}{2}) +\tfrac{1}{d_0}(s+\tfrac{d_0}{2})\langle \tfrac{\partial \sigma_y}{\partial t}, \tfrac{\eta'}{\|\eta'\| }\rangle(t, \tfrac{d_0}{2}).
\end{align*}
Thus 
\begin{align*}
    (J_T)_t(0,s)=\tfrac{1}{d_0}(\tfrac{d_0}{2}-s)J'(\tfrac{-d_0}{2}) +\tfrac{1}{d_0}(s+\tfrac{d_0}{2})J'(\tfrac{d_0}{2}).
\end{align*}
As $J_T(0, s)=0$  we have \begin{align*}
    \langle \nabla _{\tfrac{\partial \eta }{\partial t}}\tfrac{\partial \eta}{\partial t}, \tfrac{\partial \eta}{\partial t}\rangle =\frac{1}{2}\tfrac{d}{dt}_{|_{t=0}}\|J(t,s)\|^2=(J_V)_t(0,s)J_V(0,s).
\end{align*}
As in the previous section, we find the variation of Jacobi fields as follows. 
 Note that $J_V(t,s)$ solves the equation $J''+\kappa J=0$ in $(\tfrac{-d(\sigma_x(t),\sigma_y(t))}{2},\tfrac{d(\sigma_x(t),\sigma_y(t))}{2})$ with boundary conditions $J(t, \tfrac{-d(\sigma_x(t),\sigma_y(t))}{2})=J(t, \tfrac{d(\sigma_x(t),\sigma_y(t))}{2})=1.$ Then taking derivative at $t=0$ gives that $(J_V)_t(0,\cdot)$ solves the equation 
\[
J''+\kappa_t J+\kappa J=0\quad  \textup{in}\, (\tfrac{-d_0}{2},\tfrac{d_0}{2}). \]
With boundary condition
    $J_t(0,\tfrac{\pm d_0}{2})=0.$
Then by Lemma \ref{inhomogenousjacobimatrix-equ}, we get $(J_V)_t(0,s)=p(s)$ as desired.
\end{proof}

The proof of Lemma \ref{E_1derivativeofC} follows now from all the previous computations together with Lemma \ref{variation-in-coordinates}. \end{proof}

\begin{prop}\label{E_1+E_2-derivatives-of-C}
\begin{align}
\nonumber    \left(\nabla^2_{E_1,E_1}+\nabla_{E_2,E_2}^2\right)(-\mathcal C)&
=\kappa(x)J_{x,y}'(\tfrac{-d}{2})-\kappa(y)J_{x,y}'(\tfrac{d}{2})+[J_{x,y}'(\tfrac{-d}{2})]^3-[J_{x,y}'(\tfrac{d}{2})]^3\\ \nonumber
    \quad &+\mathcal C(x,y)\nabla_{e_2\oplus (-e_2)}\mathcal C(x,y)+\mathcal D(x,y),\nonumber
\end{align}
where $\mathcal D(x,y)$ is given as follows 
\begin{align}\label{def-of-D}
    \mathcal D(x,y)&=-(J_{x,y}^{0,1})'(\tfrac{-d}{2})\left(J_{x,y}'(\tfrac{d}{2})+J_{x,y}'(\tfrac{-d}{2})\right)^2\\
    &\quad +\int \tfrac{J_{x,y}^2(s)}{2}\Bigl[\Delta \kappa(\gamma_{x,y}(s))J_{x,y}^2(s)+3\kappa_1(\gamma_{x,y}(s)){p(s)}+\kappa_2(\gamma_{x,y}(s))\tilde q(s)\Bigr]\,ds\nonumber,
\end{align}
and $\tilde q(s)=q(s)+4J_{x,y}'(s)J_{x,y}(s).$
\end{prop}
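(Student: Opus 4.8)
The plan is to obtain Proposition~\ref{E_1+E_2-derivatives-of-C} by simply adding the two formulas of Lemma~\ref{E_2derivativeofC} and Lemma~\ref{E_1derivativeofC} and then reorganizing the curvature terms into the form of $\mathcal D$ in \eqref{def-of-D}. Writing $J=J_{x,y}$ and $d=d(x,y)$, the sum of the two lemmas is
\begin{align*}
\left(\nabla^2_{E_1,E_1}+\nabla^2_{E_2,E_2}\right)(-\mathcal C)
&=\kappa(x)J'(\tfrac{-d}{2})-\kappa(y)J'(\tfrac{d}{2})+[J'(\tfrac{-d}{2})]^3-[J'(\tfrac{d}{2})]^3+\mathcal C(x,y)\,\nabla_{e_2\oplus(-e_2)}\mathcal C(x,y)\\
&\quad-(J^{0,1})'(\tfrac{-d}{2})\bigl(J'(\tfrac{d}{2})+J'(\tfrac{-d}{2})\bigr)^2+\tfrac12\bigl[\kappa'(\tfrac{d}{2})-\kappa'(\tfrac{-d}{2})\bigr]\\
&\quad+\int\tfrac{J^2(s)}{2}\Bigl[\kappa_{11}(s)J^2(s)+3\kappa_1(s)p(s)+\kappa_2(s)q(s)\Bigr]\,ds,
\end{align*}
where $\kappa'(\pm\tfrac d2)=\tfrac{d}{ds}\kappa(\gamma(s))\big|_{s=\pm d/2}$. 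The first line already coincides with the first line of the asserted identity together with the term $\mathcal C\,\nabla_{e_2\oplus(-e_2)}\mathcal C$, and both $-(J^{0,1})'(-\tfrac d2)(J'(\tfrac d2)+J'(-\tfrac d2))^2$ and the summand $3\kappa_1 p$ inside the integral appear verbatim in \eqref{def-of-D}. Hence the whole content of the proposition reduces to checking
\begin{equation*}
\tfrac12\bigl[\kappa'(\tfrac d2)-\kappa'(\tfrac{-d}{2})\bigr]+\int\tfrac{J^2(s)}{2}\bigl[\kappa_{11}(s)J^2(s)+\kappa_2(s)q(s)\bigr]\,ds=\int\tfrac{J^2(s)}{2}\bigl[\Delta\kappa(\gamma(s))J^2(s)+\kappa_2(s)\tilde q(s)\bigr]\,ds.
\end{equation*}

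To see this I would use two elementary facts about the unit-speed geodesic $\gamma=\gamma_{x,y}$. First, $\kappa_2(s)=\langle\nabla\kappa,\gamma'(s)\rangle=\tfrac{d}{ds}\kappa(\gamma(s))=\kappa'(s)$ and $\tfrac{d^2}{ds^2}\kappa(\gamma(s))=\nabla^2\kappa(\gamma',\gamma')$ (since $\nabla_{\gamma'}\gamma'=0$), so in dimension two $\Delta\kappa(\gamma(s))=\kappa_{11}(s)+\kappa''(s)$, where $\kappa_{11}=\nabla^2\kappa(e_1,e_1)$. Second, $\tilde q(s)=q(s)+4J'(s)J(s)$ by definition. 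Subtracting the two integrals common to both sides, the identity to be verified becomes
\begin{equation*}
\tfrac12\bigl[\kappa'(\tfrac d2)-\kappa'(\tfrac{-d}{2})\bigr]=\int\tfrac{J^2(s)}{2}\bigl[\kappa''(s)J^2(s)+4\kappa'(s)J'(s)J(s)\bigr]\,ds=\tfrac12\int_{-d/2}^{d/2}\tfrac{d}{ds}\bigl(\kappa'(s)J^4(s)\bigr)\,ds,
\end{equation*}
since $\kappa''J^4+4\kappa'J^3J'=\tfrac{d}{ds}(\kappa'J^4)$. Integrating and using the boundary condition $J(\pm\tfrac d2)=1$ from \eqref{Jacobi-ODE} gives $\tfrac12[\kappa'(s)J^4(s)]_{-d/2}^{d/2}=\tfrac12[\kappa'(\tfrac d2)-\kappa'(\tfrac{-d}{2})]$, which is exactly the left-hand side. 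This completes the identification with $\mathcal D(x,y)$.

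There is essentially no obstacle beyond careful bookkeeping: the substantive work is contained in Lemmas~\ref{E_2derivativeofC} and \ref{E_1derivativeofC}, and the present step is a one-line integration by parts that absorbs the boundary curvature term $\tfrac12[\kappa'(\tfrac d2)-\kappa'(\tfrac{-d}{2})]$ coming from the $E_2$-computation into the Laplacian-of-$\kappa$ integrand of $\mathcal D$. The only point requiring a little care is keeping track of signs and remembering that $\kappa_{11}$ denotes the full Hessian component $\nabla^2\kappa(e_1,e_1)$ (which is how it arose in the $\kappa_{tt}$ expansion in the proof of Lemma~\ref{E_1derivativeofC}), so that the identity $\kappa_{11}+\kappa''=\Delta\kappa$ is valid along $\gamma$.
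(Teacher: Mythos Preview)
Your proposal is correct and follows essentially the same approach as the paper: add the formulas of Lemmas~\ref{E_2derivativeofC} and \ref{E_1derivativeofC}, then absorb the boundary term $\tfrac12[\kappa'(\tfrac d2)-\kappa'(-\tfrac d2)]$ into the integral via the identity $\tfrac{d}{ds}(\kappa' J^4)=\kappa'' J^4+4\kappa' J^3 J'$ together with $J(\pm\tfrac d2)=1$, which simultaneously converts $\kappa_{11}$ into $\Delta\kappa$ and $q$ into $\tilde q$. The paper records exactly this computation (see the displayed identity immediately following \eqref{secondderivativeCequality}).
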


\begin{proof}
    
\begin{align}\label{secondderivativeCequality}
  & \left( \nabla_{E_1,E_1}^2+\nabla ^2_{E_2,E_2}\right)(-\mathcal  C)\\
  = 
    &\int \tfrac{J^2(s)}{2}\Bigl[\Delta \kappa(s)J^2(s)+3\kappa_1(s){p(s)}+\kappa_2(s)\left(\tfrac{1}{d_0}(\tfrac{d_0}{2}-s)J'(\tfrac{-d_0}{2}) +\tfrac{1}{d_0}(s+\tfrac{d_0}{2})J'(\tfrac{d_0}{2})+3J'(s)J(s)\right)\Bigr]\,ds\nonumber
\\
&\quad +\kappa(x_0)J'(\tfrac{-d_0}{2})-\kappa(y_0)J'(\tfrac{d_0}{2})
   +[J'(\tfrac{-d_0}{2})]^3-[J'(\tfrac{d_0}{2})]^3-(J^{0,1})'(\tfrac{-d_0}{2})\left(J'(\tfrac{d_0}{2})+J'(\tfrac{-d_0}{2})\right)^2\nonumber\\
   &\quad +\mathcal C(x_0,y_0)\nabla_{e_2\oplus (-e_2)} \mathcal C, \nonumber 
\end{align}
where we used
\begin{align*}
    \frac{1}{2}\left( \kappa'(\tfrac{d_0}{2})-\kappa'(\tfrac{-d_0}{2})\right)=\frac{1}{2}\int_{\tfrac{-d_0}{2}}^{\tfrac{d_0}{2}}\kappa''J^4+4\kappa'J'J^3(w)\,dw.
\end{align*}
\end{proof}
\begin{rem}
    For a space form $M^2_K$, it is possible to compute these quantities directly.
    Since $C(x,y)=-\textup{tn}_K(\tfrac{d(x,y)}{2}),$ one finds that
    \begin{align*}
        \nabla ^2_{E_2,E_2}(-\mathcal C)=\frac{d^2}{dt^2}_{|_{t=0}} \textup{tn}_K(\tfrac{d}{2}-t)=2\frac{K\textup{tn}_K(\tfrac d2)}{\textup{cs}_K^2(\tfrac d2)}
    \end{align*}
    and that 
    \begin{align*}
        \nabla ^2_{E_1,E_1}(-\mathcal C)=\frac{K}{2\textup{cs}_K^2(\tfrac d2)}\nabla^2_{E_1,E_1}d=-\frac{K\textup{tn}_K(\tfrac d2)}{\textup{cs}_K^2(\tfrac d2)}.
    \end{align*}
   Moreover, we have that $\mathcal D(x,y)=0.$
\end{rem}

\subsection{Proof of Lemma \ref{estiamtesonderivative}}

In this section we estimate the quantities $\mathcal D(x,y)$ and $\varepsilon(x,y)$. Throughout this proof, we will use the fact that $J^{1,0}, J^{0,1},$ and  $J$ are strictly positive in the interval $[-\tfrac{d}{2}, \tfrac{d}{2}].$ As a result, we can apply the comparison results from Section \ref{Jacobifieldcomparisonsection}.

We now estimate $\mathcal D.$ In view of the maximum principle from Section \ref{MOCsection}, we are concerned with a lower bound of $\mathcal D.$ 
\begin{lem}\label{estiamte-of-D} For any $x,y\in \Omega$ with $d(x,y) \leq \pi/(2\sqrt{\overline \kappa})$ 
    \begin{align*}
   \mathcal D(x,y) \geq 2\mathcal C(x,y)\left(\frac{-2(\inf\Delta \kappa)_{-}}{\underline \kappa}+\frac{3\pi^2\sqrt8|\nabla \kappa|^2_\infty }{4\overline\kappa\underline \kappa}+\frac{4\pi|\nabla \kappa|_\infty }{\underline\kappa}+2\sqrt 2\frac{\overline \kappa^2}{\underline\kappa}({\overline \kappa-\underline \kappa})^2\right)
\end{align*}
\end{lem}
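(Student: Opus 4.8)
The plan is to bound $\mathcal D(x,y)$ from below by estimating each of its constituent pieces against the (negative) quantity $2\mathcal C(x,y)$. The conversion between ``$d$-sized'' quantities and $\mathcal C$ is provided by Lemma~\ref{TwopointGaussiancomparison}, which gives $\textup{tn}_{\underline\kappa}(\tfrac d2)\le -\mathcal C(x,y)\le\textup{tn}_{\overline\kappa}(\tfrac d2)$, together with the diameter hypothesis $d:=d(x,y)\le\pi/(2\sqrt{\overline\kappa})$, whose elementary consequences we use repeatedly: on $[-\tfrac d2,\tfrac d2]$ one has $0<J,J^{1,0},J^{0,1}\le\sqrt2$ and $|J'|\le\textup{tn}_{\overline\kappa}(\tfrac d2)\le\sqrt{\overline\kappa}$ (Propositions~\ref{J-estimate} and \ref{J-derivative-comparison}); $\tfrac{\underline\kappa}{2}d\le\textup{tn}_{\underline\kappa}(\tfrac d2)$, hence $d\le -\tfrac2{\underline\kappa}\mathcal C(x,y)$; and $d^2\le\tfrac{\pi^2}{4\overline\kappa}$.

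Split $\mathcal D=\mathcal D_0+\mathcal D_1$ according to \eqref{def-of-D}, where $\mathcal D_0=-(J^{0,1})'(-\tfrac d2)\bigl(J'(\tfrac d2)+J'(-\tfrac d2)\bigr)^2$ and $\mathcal D_1$ is the integral. The essential point for $\mathcal D_0$ is that $J'(\tfrac d2)+J'(-\tfrac d2)$ vanishes identically when $\kappa$ is constant; in general Proposition~\ref{J-derivative-comparison} confines both $J'(-\tfrac d2)$ and $-J'(\tfrac d2)$ to $[\textup{tn}_{\underline\kappa}(\tfrac d2),\textup{tn}_{\overline\kappa}(\tfrac d2)]$, so $|J'(\tfrac d2)+J'(-\tfrac d2)|\le\textup{tn}_{\overline\kappa}(\tfrac d2)-\textup{tn}_{\underline\kappa}(\tfrac d2)$, which is of size $(\overline\kappa-\underline\kappa)d$ by a mean-value estimate on $K\mapsto\textup{tn}_K(\tfrac d2)$. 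Combining this with $(J^{0,1})'(-\tfrac d2)\le\textup{sn}_{\overline\kappa}(d)^{-1}\le\tfrac\pi{2d}$ and $d\le-\tfrac2{\underline\kappa}\mathcal C(x,y)$, one obtains $\mathcal D_0\ge 2\mathcal C(x,y)\cdot 2\sqrt2\,\tfrac{\overline\kappa^2}{\underline\kappa}(\overline\kappa-\underline\kappa)^2$ after collecting constants.

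For $\mathcal D_1=\int_{-d/2}^{d/2}\tfrac{J^2}{2}\bigl[\Delta\kappa\,J^2+3\kappa_1 p+\kappa_2\tilde q\bigr]\,ds$ I would treat the three summands separately. Since $\Delta\kappa\ge(\inf\Delta\kappa)_-$ and $\tfrac{J^4}{2}\le2$, the first summand is $\ge 2d(\inf\Delta\kappa)_-\ge 2\mathcal C(x,y)\tfrac{-2(\inf\Delta\kappa)_-}{\underline\kappa}$. For the $\kappa_2\tilde q$ summand, the explicit formulas for $q$ and for $\tilde q=q+4J'J$ give $|\tilde q|\le C\,\textup{tn}_{\overline\kappa}(\tfrac d2)$ using $|J'|\le\textup{tn}_{\overline\kappa}(\tfrac d2)$ and $|J|\le\sqrt2$, so with $|\kappa_2|\le|\nabla\kappa|_\infty$ this summand is $\ge -C|\nabla\kappa|_\infty\textup{tn}_{\overline\kappa}(\tfrac d2)\,d$, which collects into $2\mathcal C(x,y)\tfrac{4\pi|\nabla\kappa|_\infty}{\underline\kappa}$ once $\textup{tn}_{\overline\kappa}(\tfrac d2)$ and $d$ are controlled by $-\mathcal C(x,y)$ via the diameter bound. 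The delicate summand is $3\kappa_1 p$: here $p(s)$ is itself the non-local integral \eqref{p(s) definition} against $\kappa_1$, and inserting $0<J^{1,0},J^{0,1}\le1$, $|J|\le\sqrt2$, and $|(J^{1,0})'(\tfrac d2)|^{-1}=\bigl((J^{0,1})'(-\tfrac d2)\bigr)^{-1}\le\textup{sn}_{\underline\kappa}(d)\le d$ (using Remark~\ref{n-dim-jacobi-symmetry} and Proposition~\ref{J-derivative-comparison}) yields $|p(s)|\le 2\sqrt2\,|\nabla\kappa|_\infty d^2$; multiplying by $3|\kappa_1|\le3|\nabla\kappa|_\infty$ and $\tfrac{J^2}{2}\le1$ and integrating over a length-$d$ interval gives this summand $\ge -6\sqrt2\,|\nabla\kappa|_\infty^2 d^3$, and then $d^3\le\tfrac{\pi^2}{4\overline\kappa}\,d\le-\tfrac{\pi^2}{2\overline\kappa\underline\kappa}\mathcal C(x,y)$ turns it precisely into $2\mathcal C(x,y)\tfrac{3\pi^2\sqrt8}{4\overline\kappa\underline\kappa}|\nabla\kappa|_\infty^2$. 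Summing the four contributions gives the stated inequality.

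The main obstacle is this last, quadratic-in-$|\nabla\kappa|$ term: since $p$ is obtained by solving an inhomogeneous Jacobi equation whose right-hand side already contains $\kappa_1$ (Lemma~\ref{solutionofsecondorderode}), estimating $\int\tfrac{J^2}{2}\kappa_1 p$ means handling a double integral, and one has to keep careful track of the normalizing factor $(J^{1,0})'(\tfrac d2)^{-1}$ and of the signs and sizes of $J$, $J^{1,0}$, $J^{0,1}$ on subintervals in order to land on the exact constant. A secondary difficulty is the boundary term $\mathcal D_0$, where the cancellation $J'(\tfrac d2)+J'(-\tfrac d2)\equiv0$ for constant curvature --- accessible only through the two-sided endpoint comparison of Proposition~\ref{J-derivative-comparison} --- must be quantified to yield a factor $(\overline\kappa-\underline\kappa)^2$; without that, $\mathcal D$ would not be dominated by the favorable terms in the modulus-of-concavity inequality \eqref{inequalities-from-proposition}.
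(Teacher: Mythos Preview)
Your proposal is correct and follows essentially the same route as the paper: split $\mathcal D$ into the boundary piece $\mathcal D_0$ and the integral $\mathcal D_1$, bound each summand using the Jacobi field comparisons of Propositions~\ref{J-estimate}--\ref{J-derivative-comparison} (in particular $J\le\sqrt2$, $J^{1,0},J^{0,1}\le1$, $|(J^{1,0})'(\tfrac d2)|^{-1}\le d$, and $|J'(\tfrac d2)+J'(-\tfrac d2)|\le\textup{tn}_{\overline\kappa}-\textup{tn}_{\underline\kappa}$), and then convert the resulting $d$-powers into multiples of $\mathcal C$ via $d\le -\tfrac{2}{\underline\kappa}\mathcal C$ and $d^2\le\tfrac{\pi^2}{4\overline\kappa}$. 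The only cosmetic difference is in $\mathcal D_0$: the paper rewrites $(J'(\tfrac d2)+J'(-\tfrac d2))^2=\textup{tn}_{\underline\kappa}^2\bigl(\tfrac{\textup{tn}_{\overline\kappa}}{\textup{tn}_{\underline\kappa}}-1\bigr)^2$ and estimates the ratio by integrating $\partial_K\textup{tn}_K$, whereas you bound the difference $\textup{tn}_{\overline\kappa}-\textup{tn}_{\underline\kappa}$ directly by the same mean-value argument---both produce the required $(\overline\kappa-\underline\kappa)^2$ factor.
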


\begin{proof}
Concerning the integral terms, note that
\begin{align*}
    |p(s)| &\leq J^{1,0}(s)\int_{\tfrac{-d}{2}}^s\tfrac{J^{0,1}(w)}{-(J^{1,0})'\left(\tfrac{d}{2}\right)}J(w)|\nabla \kappa|(w)\,dw+J^{0,1}(s)\int_{s}^{\tfrac{d}{2}}\tfrac{J^{1,0}(w)}{-(J^{1,0})'\left(\tfrac{d}{2} \right)}J(w)|\nabla \kappa|(w)\,dw\\
    &\leq J(s)\textup{sn}_{\overline \kappa}(d)\Bigl(\int_{-\tfrac{d}{2}}^{\tfrac{d}{2}}J^2|\nabla \kappa|\, dw\Bigr)\\
    &\leq \frac{\textup{sn}_{\overline \kappa}(d)|\nabla \kappa|_\infty}{\textup{cs}^3_{\overline \kappa}(\tfrac{d}{2})}\int_{-\tfrac{d}{2}}^{\tfrac{d}{2}}\textup{cs}^2_{\overline \kappa}(w)\, dw\\&\leq {d^2\sqrt{8}}|\nabla k|_\infty
\end{align*}

and that 
\begin{align*}
   |\tilde q(s)|= &\Bigl|\tfrac{1}{d}(\tfrac{d}{2}-s)J'(\tfrac{-d}{2}) +\tfrac{1}{d}(s+\tfrac{d_0}{2})J'(\tfrac{d}{2})+3J'(s)J(s)\Bigr|\\
   & \leq \left(2+\frac{3}{\textup{cs}_{\overline \kappa}(\tfrac{d}{2})}\right)\textup{tn}_{\overline \kappa}\\
   &\leq \tfrac{5\sqrt{\overline \kappa} d}{\textup{cs}^2_{\overline \kappa}(\tfrac{d}{2})}\leq 10\sqrt{\overline \kappa} d,
\end{align*}
where we used the fact that $\textup{cs}_{\overline \kappa}(\tfrac{d}{2})\geq 1/\sqrt2.$

Thus we obtain that 
\begin{align*}
    &\int \tfrac{J^2(s)}{2}\Bigl[\Delta \kappa(s)J^2(s)+3\kappa_1(s){p(s)}+\kappa_2(s)\tilde q(s)\Bigr]\,ds \\
    \geq &  2d(\inf\Delta \kappa)_{-}-{3\sqrt8|\nabla \kappa|^2_\infty d^3}-10\sqrt{\overline \kappa}|\nabla \kappa|_\infty d^2\\
    \geq & 2\mathcal C(x,y)\Bigl(\frac{-2(\inf\Delta \kappa)_{-}}{\underline \kappa}+\frac{3\sqrt8|\nabla \kappa|^2_\infty d^2}{\underline \kappa}+\frac{10\sqrt{\overline \kappa}|\nabla \kappa|_\infty d}{\underline\kappa}\Bigr)
\end{align*}
where we again used $\textup{cs}_{\overline \kappa}(\tfrac{d}{2})\geq 1/\sqrt2.$
Furthermore, 
\begin{align*}
&-(J^{0,1})'(\tfrac{-d_0}{2})\left(J'(\tfrac{d_0}{2})+J'(\tfrac{-d_0}{2})\right)^2\\
    &\geq  -\tfrac{1}{\textup{sn}_{\overline \kappa}(d)}\textup{tn}_{\underline\kappa}^2(\tfrac{d}{2})\left(\tfrac{\textup{tn}_{\overline \kappa}}{\textup{tn}_{\underline \kappa}}-1\right)^2\\
   &\geq  \sqrt{2}\overline \kappa  \mathcal C(x,y)\left(\tfrac{\textup{tn}_{\overline \kappa}}{\textup{tn}_{\underline \kappa}}-1\right)^2,
\end{align*}
for we estimate 
\begin{align*}
\frac{\textup{tn}_{\overline \kappa}}{\textup{tn}_{\underline \kappa}}-1&= \frac{1}{\textup{tn}_{\underline \kappa}} \int_{\underline \kappa}^{\overline \kappa}\frac{1}{2\sqrt{\kappa}}\tan(\sqrt{\kappa}\tfrac{d}{2})+\frac{d}{4\textup{cs}_\kappa^2(\tfrac{d}{2})}\, d\kappa\\
&\leq \frac{\overline \kappa-\underline \kappa}{\textup{tn}_{\underline \kappa}}\left( \frac{\textup{tn}_{\overline \kappa}(\tfrac{d}{2})}{2 \sqrt{\underline \kappa}}+ \frac{d}{2}\right)\\
&\leq \frac{\overline \kappa-\underline \kappa}{\underline \kappa}\left(\frac{\textup{tn}_{\overline \kappa}(\tfrac{d}{2})}{ d\sqrt{\underline \kappa}}+1\right),
\end{align*}
where we used $\textup{tn}_{\underline \kappa}(\tfrac d2) \geq \underline \kappa\tfrac d2$ in the second inequality. Further, since the function $x \mapsto \tan(x)/x$ is strictly increasing on $(0,\infty)$  (whenever defined), and since $d\leq \tfrac{\pi}{2\sqrt{\overline \kappa}}$
\begin{align*}
    &\frac{\overline \kappa-\underline \kappa}{\underline \kappa}\left(\frac{\textup{tn}_{\overline \kappa}(\tfrac{d}{2})}{ d\sqrt{\underline \kappa}}+1\right)\\
    &\leq \frac{\overline \kappa-\underline \kappa}{\underline \kappa}\left(\frac{4{\overline \kappa}}{ \pi\sqrt{\underline \kappa}}+1\right).
\end{align*}
Putting it all together, with $D\leq \tfrac{\pi}{2\sqrt{\overline \kappa}}$ gives the claim.
\end{proof}
 In order to obtain estimate \eqref{estiamtesonderivative}, we proceed with estimating $\varepsilon(x,y)$ which is defined in \eqref{epsilon-case-1}.
 \begin{lem}\label{estiamte-of-epsilon} For any $x,y\in \Omega$
     \begin{align*}
         - \frac{\varepsilon^2(x,y)}{8\textup{tn}_{\underline \kappa}(\tfrac{d}{2})}\geq  2\mathcal C(x,y)\frac{|\nabla \kappa|^2_\infty}{\underline \kappa^2}.
     \end{align*}
 \end{lem}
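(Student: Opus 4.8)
The plan is to bound $\varepsilon(x,y)$ directly by $|\nabla\kappa|_\infty$ and the distance $d=d(x,y)$, replace $\textup{tn}_{\underline\kappa}(\tfrac d2)$ in the denominator by its elementary linear lower bound, and then trade the leftover factor of $d$ for $\mathcal C(x,y)$ using the Hessian (index) comparison of Lemma~\ref{TwopointGaussiancomparison}. Each of these is a one-line estimate, so the proof will be short; the only thing to keep track of is that $\mathcal C(x,y)<0$, so that multiplications and divisions involving $\mathcal C$ reverse inequalities.

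First I would estimate $\varepsilon$. By \eqref{epsilon-case-1}, $\varepsilon(x,y)=2\left(\kappa(y)-\kappa(x)\right)$, and writing the difference as $\int_{-d/2}^{d/2}\langle\nabla\kappa,\gamma_{x,y}'(s)\rangle\,ds$ and applying Cauchy--Schwarz gives $|\kappa(y)-\kappa(x)|\le|\nabla\kappa|_\infty\,d$, hence $\varepsilon^2(x,y)\le 4|\nabla\kappa|_\infty^2\,d^2$. Next, since $\tan x\ge x$ on $[0,\tfrac\pi2)$ and $\sqrt{\underline\kappa}\,\tfrac d2\le\sqrt{\overline\kappa}\,\tfrac D2\le\tfrac\pi4$, we have $\textup{tn}_{\underline\kappa}(\tfrac d2)=\sqrt{\underline\kappa}\tan(\sqrt{\underline\kappa}\,\tfrac d2)\ge\underline\kappa\,\tfrac d2$. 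Combining these two facts,
\[
-\frac{\varepsilon^2(x,y)}{8\,\textup{tn}_{\underline\kappa}(\tfrac d2)}\ \ge\ -\frac{4|\nabla\kappa|_\infty^2\,d^2}{8\,\underline\kappa\,\tfrac d2}\ =\ -\frac{|\nabla\kappa|_\infty^2\,d}{\underline\kappa}.
\]

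Finally, Lemma~\ref{TwopointGaussiancomparison} (with $\kappa\ge\underline\kappa$), together with the same bound $\textup{tn}_{\underline\kappa}(\tfrac d2)\ge\underline\kappa\,\tfrac d2$, gives $\mathcal C(x,y)\le-\textup{tn}_{\underline\kappa}(\tfrac d2)\le-\tfrac{\underline\kappa}{2}\,d<0$, that is, $d\le-\tfrac{2\mathcal C(x,y)}{\underline\kappa}$. Multiplying this inequality by the positive number $|\nabla\kappa|_\infty^2/\underline\kappa$ yields $-\dfrac{|\nabla\kappa|_\infty^2\,d}{\underline\kappa}\ge 2\mathcal C(x,y)\dfrac{|\nabla\kappa|_\infty^2}{\underline\kappa^2}$, and chaining this with the previous display proves the claim. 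I do not expect any genuine obstacle here: once Lemma~\ref{TwopointGaussiancomparison} is available the estimate is essentially immediate, and the only subtlety is the sign bookkeeping forced by $\mathcal C(x,y)<0$.
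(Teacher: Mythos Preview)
Your proof is correct and follows essentially the same route as the paper: bound $|\varepsilon(x,y)|$ by $2d|\nabla\kappa|_\infty$ via the line integral along $\gamma_{x,y}$, use $\textup{tn}_{\underline\kappa}(\tfrac d2)\ge\underline\kappa\,\tfrac d2$ in the denominator, and then absorb the remaining factor of $d$ via $\mathcal C(x,y)\le-\textup{tn}_{\underline\kappa}(\tfrac d2)\le-\tfrac{\underline\kappa}{2}d$ from Lemma~\ref{TwopointGaussiancomparison}. Your write-up is in fact a bit more explicit about the sign bookkeeping and the range where $\tan x\ge x$ applies, but otherwise the two arguments coincide step for step.
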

 \begin{proof}
Observe that 
\begin{align*}
    \varepsilon(x,y)
    &=\int_{\tfrac{-d}{2}}^{\tfrac{d}{2}}\langle \gamma_{x,y}'(s),\nabla \kappa(s)\rangle\,ds\leq 2d(x,y)|\nabla \kappa|_\infty
\end{align*}
This gives that 
\begin{align*}
     - \frac{\varepsilon^2(x,y)}{8\textup{tn}_{\underline \kappa}(\tfrac{d(x,y)}{2})}\geq -\frac{|\nabla \kappa|_\infty^2d(x,y)}{\underline\kappa }\geq 2\mathcal C(x,y)\frac{|\nabla \kappa|^2_\infty}{\underline \kappa^2}
\end{align*}
as desired.
\end{proof}In order to obtain Lemma \ref{estiamtesonderivative}, we estimate the remaining terms: 
\begin{align*}
&\Bigr(-4\lambda+3\overline\kappa+\textup{tn}_{\overline\kappa}^2(\tfrac {d(x,y)}{2})+ \textup{tn}_{\underline \kappa}(\tfrac{d(x,y)}{2})\mathcal C(x,y)\Bigl )\mathcal C(x,y)\\
\geq & \Bigr(-4\lambda+3\overline\kappa+\textup{tn}_{\overline\kappa}^2(\tfrac {d(x,y)}{2})+ \textup{tn}^2_{\underline \kappa}(\tfrac{d(x,y)}{2})\Bigl )\mathcal C(x,y)\\
\geq &\Bigr(-4\lambda+3\overline\kappa+ (\overline \kappa-\underline \kappa) (1+\frac\pi2)\Bigl )\mathcal C(x,y),
\end{align*}
where we used the fact that due to $\kappa>0,$ $\mathcal C<0$ and moreover,\begin{align}\label{tn_k-square-terms}
    \textup{tn}_{\overline \kappa}^2-\textup{tn}_{\underline \kappa}^2&=\int_{\underline \kappa}^{\overline \kappa}\frac{d}{d\kappa} \textup{tn}^2_\kappa(\tfrac{d}{2})\,d\kappa =\int_{\underline \kappa}^{\overline \kappa}\frac{d}{d\kappa}\kappa\tan^2(\sqrt{\kappa}\tfrac{d}{2})\, d\kappa\\\nonumber
    &=\int_{\underline \kappa}^{\overline \kappa} \left[ \tan^2(\sqrt{\kappa}\tfrac{d}{2})+ \tfrac{d\sqrt{\kappa}}{2} \tan(\sqrt \kappa \tfrac{d}{2})\, \textup{sec}^2(\sqrt \kappa\tfrac{d}{2}) \right] \, d\kappa\\\nonumber
    &\leq (\overline \kappa-\underline \kappa) \left(1+d\sqrt{\overline \kappa}\right)\\\nonumber
    &\leq (\overline \kappa-\underline \kappa) (1+\frac\pi2),
\end{align} 
where we used $D\leq \tfrac{\pi}{2\sqrt{\overline \kappa}}$ in the last two inequalities.
Putting all the above estimates together, Lemma~\ref{estiamtesonderivative} is proved.

\begin{rem}
    It is possible to sharpen the estimate of Lemma~\ref{estiamtesonderivative} with more careful analysis. Doing so will improve the constants, but not essentially change the form of the estimates.
\end{rem}

\bibliographystyle{alpha}
\bibliography{references}

\appendix

\section{Convex domains whose fundamental gap is less than  $\frac{ 3\pi^2+\epsilon}{D^2}$}

\label{Wecannotimproveon3}

In this appendix, we construct domains on a surface which satisfy the estimate 
\[ \Gamma(\Omega) \leq \frac{ 3\pi^2+\epsilon}{D^2}, \]
which implies that the leading term of our estimate cannot be improved.

This proof follows from elliptic regularity, but it requires a bit of care because if one fixes the diameter, sufficiently thin domains actually have very large fundamental gap (See Theorem 1.1 of \cite{surfacepaper1}).

\begin{prop}
     Let $(M^2,g)$ be a Riemannian surface. Then there is a convex domain $\Omega \subset M^2$ which satisfies \begin{equation} \label{Small gaps} \Gamma(\Omega) \leq  \frac{ 3\pi^2+\epsilon}{D^2} \end{equation}
\end{prop}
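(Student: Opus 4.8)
The plan is to reduce the claim to a local statement: near any point $p \in M^2$, the metric $g$ is $C^2$-close (after rescaling) to the flat Euclidean metric, so eigenvalue and eigengap data converge to their Euclidean counterparts. First I would fix a point $p$ and geodesic normal coordinates centered at $p$, in which $g_{ij}(0) = \delta_{ij}$ and $g_{ij}(x) = \delta_{ij} + O(|x|^2)$. For a small parameter $\rho > 0$, I would take $\Omega_\rho$ to be the preimage under the exponential map of a thin rectangular box $R_\rho = [0,\rho] \times [0, \rho^{1+\theta}]$ for some small fixed $\theta > 0$ (thin in the second coordinate, so that the diameter is essentially the length $\rho$ of the long side). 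This box is geodesically convex for $\rho$ small since the metric is nearly Euclidean. Its diameter $D_\rho$ satisfies $D_\rho = \rho(1 + o(1))$ as $\rho \to 0$.

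Next I would invoke elliptic/spectral stability: after rescaling coordinates by $1/\rho$, the Dirichlet Laplacian of $(\Omega_\rho, g)$ on the rescaled domain converges in the appropriate operator sense to the flat Dirichlet Laplacian on the unit box $[0,1] \times [0, \rho^\theta]$, with error controlled by $\|g_{ij} - \delta_{ij}\|_{C^0}$ on the box, which is $O(\rho^2)$. Hence $\rho^2 \lambda_k(\Omega_\rho)$ converges to the flat eigenvalues $\mu_k$ of that rectangle. For a very thin rectangle of long side $1$ and short side $b = \rho^\theta$, the first two eigenvalues are $\mu_1 = \pi^2 + \pi^2/b^2$ and $\mu_2 = 4\pi^2 + \pi^2/b^2$ (the second eigenfunction still varies only in the long direction provided $b$ is small enough that $3\pi^2 < \pi^2/b^2 \cdot(\text{something})$ — more precisely provided the first "transverse" mode $\pi^2/b^2 + \pi^2 > 4\pi^2$, i.e. $b < 1/\sqrt3$). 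Thus the flat gap is $\mu_2 - \mu_1 = 3\pi^2$, and $D_{\text{flat}} = \sqrt{1 + b^2} = 1 + O(b^2)$, giving $\mu_2 - \mu_1 = 3\pi^2 / D_{\text{flat}}^2 \cdot (1 + O(b^2))$.

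Then I would combine the two limits: $\Gamma(\Omega_\rho) = \rho^{-2}(\mu_2 - \mu_1 + o(1)) = \rho^{-2}(3\pi^2 + o(1))$ while $D_\rho^2 = \rho^2(1 + o(1))$, so $\Gamma(\Omega_\rho) D_\rho^2 \to 3\pi^2$ as $\rho \to 0$ (and independently $b = \rho^\theta \to 0$). Therefore, given $\epsilon > 0$, choosing $\rho$ small enough makes $\Gamma(\Omega_\rho) D_\rho^2 < 3\pi^2 + \epsilon$, which is exactly \eqref{Small gaps} with $\Omega = \Omega_\rho$ and $D = D_\rho$. One should double-check that $\Omega_\rho$ is genuinely convex in $(M,g)$ (not merely in coordinates): this follows because the second fundamental form of the coordinate hyperplanes bounding the box is $O(\rho)$, while their "Euclidean" convexity is exact, so for small $\rho$ the boundary pieces remain convex with respect to $g$; and a box is an intersection of half-spaces, each of which stays convex.

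The main obstacle is the spectral convergence step done carefully enough to extract the \emph{gap} (not just individual eigenvalues) with the right normalization, together with keeping track that the second eigenfunction of the thin flat rectangle is the long-direction mode rather than a transverse mode — this requires the quantitative separation $b < 1/\sqrt 3$, which is why one takes the short side to shrink. A secondary technical point is verifying uniform convexity of $\Omega_\rho$ as required by the definition of the fundamental gap problem (so that $\lambda_1 < \lambda_2$ strictly and the domain is admissible); this is handled by the normal-coordinate estimate on the metric. Everything else is a standard compactness/perturbation argument, so I would not belabor the routine estimates.
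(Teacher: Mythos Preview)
Your overall strategy---shrink a near-optimal Euclidean domain inside a small normal-coordinate chart and invoke spectral stability---is exactly the paper's approach. But the convexity step has a genuine gap. The sides of a coordinate rectangle are \emph{straight lines}, so their Euclidean geodesic curvature is exactly zero, not positive. When you pass to the metric $g$, the geodesic curvature of those sides becomes $O(\rho)$ with a sign determined by the ambient curvature, and nothing prevents it from being negative; the perturbation then makes the sides bow \emph{inward} and the domain is no longer convex. Your sentence ``their `Euclidean' convexity is exact, so for small $\rho$ the boundary pieces remain convex'' is precisely where the argument fails: there is no positive margin to absorb the perturbation. The paper deals with this by first replacing the rectangle's sides by outward-bowing circular arcs of large radius $r$, producing a \emph{strongly} convex Euclidean domain $\Omega$ with boundary curvature $\ge 1/r > 0$, and only then shrinking by $\rho \ll 1/r$ so the $O(\rho)$ metric perturbation cannot destroy convexity.

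A secondary point: by letting the aspect ratio $\rho^\theta$ shrink with $\rho$, your rescaled limit domain $[0,1]\times[0,\rho^\theta]$ itself depends on $\rho$, and its first eigenvalue blows up like $\rho^{-2\theta}$. You then need the eigenvalue error from the metric perturbation (roughly $O(\rho^2)\cdot\lambda_1 = O(\rho^{2-2\theta})$) to be $o(1)$ relative to the fixed gap $3\pi^2$. This does work for $\theta<1$, but it is an unnecessary complication. The paper instead fixes once and for all a (rounded) rectangle $\Omega$ in $\mathbb{R}^2$ with $\Gamma(\Omega) \le (3\pi^2+\epsilon/2)/L^2$, and then shrinks that \emph{fixed} shape uniformly; the spectral convergence is then to a fixed target and the argument is cleaner.
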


\begin{proof}

We start by considering four points in $\mathbb{R}^2$ whose convex hull form a rectangle $R$ satisfying \[ \Gamma(R) \leq 
 \frac{3\pi^2+\frac{\epsilon}{4}}{L^2}, \]
where $L$ is the diameter of $R$ in terms of Euclidean distance. 
We use the convention that the long side of $R$ are the \emph{horizontal sides} and the shorter sides of $R$ are the \emph{vertical sides}. Furthermore, we rescale the rectangle so that the horizontal sides have length $1$ in Euclidean distance and center the rectangle at the origin.

We then deform $R$ slightly by replacing all the sides by circular arcs of radius $r \gg 1$, which bow outward. We call this domain $\Omega$, which is now strongly convex in the sense that curvature of its boundary has a positive lower bound. We pick $r$ sufficiently large so that \[\Gamma(\Omega) \leq 
 \frac{3\pi^2+\frac{\epsilon}{2}}{L^2}. \]
Note that the diameter of $\Omega$ might not be exactly the same as the diameter of $R$, but will be very close.

We now consider a geodesic $\gamma$ in $M$ and let $(x,y)$ be Fermi coordinates defined along $\gamma$ (see \cite[Section 3.1]{khan2022negative} for a definition of these coordinates). We then let $\rho$ be a positive parameter and define $\rho \Omega \subset M^2$ to be the image of $\rho \Omega$ in these Fermi coordinates. In other words, we consider the set $\rho \Omega$ as a subset of $\mathbb{R}^2$ and then take the corresponding domain in $M$ under the coordinate chart. Our goal now is to show that for sufficiently small $\rho$, the domain is convex and satisfies the gap estimate \eqref{Small gaps}.

\subsection{Convexity of $\rho \Omega$}
The same section of \cite{khan2022negative} shows that by taking $\rho$ small enough (small enough here depends on bounds on the curvature and its first two derivatives, as well as the height of the original rectangle $R$), the metric is $C^2$ close to a Euclidean metric in the Fermi coordinates. Hence, by taking $\rho\ll \frac{1}{r}$ (so potentially even smaller), all of the circular arcs defining the boundary of $\rho \Omega$ have positive geodesic curvature with respect to the metric $g$. As such, the domain $\Omega$ is convex.

\subsection{Continuity of the spectrum}
To show that the fundamental gap of $\rho \Omega$ is not too large, we perform a rescaling argument. More precisely, we rescale the metric so that the diameter of $\rho \Omega$ is $1$. The rescaling which achieves this is roughly $\frac{g}{\rho}$, but there are some lower order corrections.

Doing so, the curvature and all of its derivatives can be made arbitrarily small (by taking $\rho$ sufficiently small). By elliptic regularity, the spectrum will depend continuously on the coefficients of the metric. As a result, we can choose ${\rho}$ sufficiently small so that the fundamental gap of ${\rho \Omega}$ in the rescaled metric is $\frac{\epsilon}{2}$ from the fundamental gap of the original domain $\Omega$ in the Euclidean metric. As such, in the rescaled metric we have that
\[\Gamma(\rho \Omega) \leq 
 \frac{3\pi^2+\epsilon}{1^2}. \]
However, this estimate is scale-invariant, so returning the metric back to its original scale, we recover the estimate \[\Gamma(\rho \Omega) \leq 
 \frac{3\pi^2+\epsilon}{D^2}. \]
\end{proof}

In practice, what this is doing is taking a rectangle whose fundamental gap is close to $3 \pi^2$ and making it small enough so that the effects of the curvature are negligible. This is a little bit subtle, because if one considers domains of a fixed diameter in a positively curved surface and then shrinks the inradius to zero, the fundamental gap will go to infinity (See Theorem 1.1 of \cite{surfacepaper1}). In particular, this argument requires that we shrink the diameter and inradius to zero simultaneously. 

\begin{rem} \label{3issharpinEuclideanspace}
    In higher dimensions, one can apply a similar construction to obtain convex domains whose gap is $\leq 
 \frac{3\pi^2+\epsilon}{D^2}.$

To do so, we consider a rectangular paralleliped whose sides are all very short except for one. For such a domain in Euclidean space, the fundamental gap can be made arbitrarily close to \ref{gap-3pi}.
Then, we consider a domain $\Omega$ which is an approximation to a long rectangular parallelipiped. However, it is important that this approximation is \emph{strongly convex}, in that there is a positive (albeit small) lower bound on the second-fundamental form of its boundary. From this, the construction can be repeated mutatis mutandis. 
\end{rem}

\end{document}